\def\R{\mathbb R}
\def\N{\mathbb N}
\def\Sph{\mathbb S}
\def\ov{\overline}
\def\cal{\mathcal}
\def\hat{\widehat}
\def\tilde{\widetilde}
\def\TM{T\!M}
\def\UM{U\!M}
\def\TFL{{\rm TFL}}
\def\dist{{\rm dist}}
\def\I{{\rm I}}
\def\NF{{\rm NF}}
\def\J{\mathbb J}
\def\Ker{{\rm Ker}} 
\def\vect{{\rm Vect}}
\renewcommand{\bar}{\overline}
\def\S{\mathfrak{S}}
\def\<{\langle}
\def\>{\rangle}
\def\TCL{{\rm TCL}}
\def\cut{{\rm cut}}
\def\TCP{{$\mathcal{TCP}$ }}
\def\PP{\mathcal P}
\def\begeq{\begin{equation}}
\def\endeq{\end{equation}}
\def\begar{\begin{eqnarray}}
\def\endar{\end{eqnarray}}
\def\begar*{\begin{eqnarray*}}
\def\endar*{\end{eqnarray*}}
\def\begal{\begin{align}}
\def\endal{\end{align}}
\def\begal*{\begin{align*}}
\def\endal*{\end{align*}}
\newtheorem{Thm}{Theorem}
\newtheorem{Lem}[Thm]{Lemma}
\newtheorem{Prop}[Thm]{Proposition}
\numberwithin{equation}{section}
\numberwithin{Thm}{section}
\theoremstyle{definition}
\newtheorem{Def}[Thm]{Definition}
\newtheorem{Rk}[Thm]{Remark}
\theoremstyle{remark}
\newtheorem*{Thm*}{Theorem}
\newtheorem*{Lem*}{Lemma}
\newtheorem*{Conj*}{Conjecture}
\newtheorem*{Cor*}{Corollary}
\newtheorem*{Def*}{Definition}
\newtheorem*{Prop*}{Proposition}
\newtheorem*{Exo*}{Exercise}
\newtheorem*{Exs*}{Examples}
\newtheorem*{Ex*}{Example}
\newtheorem*{Rk*}{Remark}
\newtheorem*{Rks*}{Remarks}
\begin{document}

\title[On the convexity of injectivity domains]{On the convexity of injectivity domains\\ on nonfocal manifolds}
\author{A. Figalli}
\author{Th. Gallou\"et}
\author{L. Rifford}

\begin{abstract}
Given a smooth nonfocal compact Riemannian manifold, we show that the so-called Ma--Trudinger--Wang condition implies the convexity of injectivity domains. This improves a previous result by Loeper and Villani.\end{abstract}

\maketitle

\section{Introduction}
Let $(M,g)$ be a smooth compact Riemannian manifold of dimension $n \geq  2$. 
The {\em injectivity domain} at a point $x \in M$ is defined as 
\[
\I(x) := \Bigl\{ v \in T_xM \, \vert \, \exists\, t >1 \mbox{ s.t. } d(x,\exp_x(tv)) = |tv|_x \Bigr\},
\]
where $\exp_x$ denotes the exponential mapping at $x$, $d$ the geodesic distance on $M\times M$, and $|v|_x=\sqrt{g_x(v,v)}= \sqrt{\langle v,v\rangle_x}$.
We recall that $I(x)$ is an open star-shaped subset of $T_xM$, and by 
the Itoh-Tanaka Theorem \cite{cr10,it01,ln05} its boundary $\TCL(x)$ (which is called {\em tangent cut locus} at $x$) is Lipschitz.
Its image by the exponential mapping is called the {\em cut locus} of $x$,
$$
\cut(x) := \exp_x \bigl( \TCL(x) \bigr).
$$
Recall that the geodesic distance from $x$, that is the function $y\mapsto d(x,y)$, is smooth outside $\cut(x)$, and more generally the distance function $d$ is smooth outside the set 
$$
\cut(M) := \Bigl\{ (x,y) \in M \times M \, \vert \, y \in \cut(x) \Bigr\}.
$$
For every $x\in M$, $v\in\I(x)$, and $(\xi,\eta)\in T_xM\times T_xM$,
the {\em Ma--Trudinger--Wang tensor} (or MTW tensor for short) at $(x,v)$
evaluated on $(\xi,\eta)$ is defined by the formula
\begin{equation}
\label{eq:MTW s}
\S_{(x,v)}(\xi,\eta) := -\frac32 \, \left. \frac{d^2}{ds^2}\right|_{s=0} 
\left.\frac{d^2}{dt^2}\right|_{t=0}\, \frac{d^2}{2} \Bigl( \exp_x(t\xi),\, \exp_x (v+s\eta)\Bigr).
\end{equation}
(The MTW tensor was introduced for the first time in \cite{mtw05}
in a slightly different way, see also \cite{villaniSF}.)
Since $v \in I(x)$ we have that $\exp_x(v)\not\in \cut(x)$, hence pair of points 
$ \left(\exp_x(t\xi), \exp_x (v+s\eta)\right)$ does not belong to $\cut(M)$ provided $s,t$ are small enough,  and the right-hand side in \eqref{eq:MTW s} is well-defined. As observed by Loeper in \cite{loeper09}, if $\xi, \eta$ are two unit orthogonal vectors in $T_xM$, then
$$
\S_{(x,0)}(\xi,\eta) = \sigma_x(P)
$$
is the sectional curvature of $M$ at $x$ along the plane $P$ generated by $\xi$ and $\eta$.

\begin{Def}\label{def:MTW}
We say that $(M,g)$ satisfies {\bf (MTW)} if the following property is satisfied:
\begin{eqnarray*}
\forall \,x \in M, \, \forall \,v \in \I(x), \, \forall \,\xi,\eta \in T_xM, \quad \Bigl[\<\xi,\eta\>_x = 0 \,\Longrightarrow\, \S_{(x,v)}(\xi,\eta)\geq 0\Bigr].
\end{eqnarray*}
We say that $(M,g)$ satisfies  {\bf (MTW$(K,C)$)} if there exists $(K,C) \in \R \times  \R\cup\{+\infty\}$:
\begin{eqnarray*}
\forall \,x \in M, \, \forall \,v \in \I(x), \, \forall \,\xi,\eta \in T_xM, \quad \S_{(x,v)}(\xi,\eta)\geq -C \left| \langle \xi, \eta\rangle_x \right|
|\xi|_x |\eta|_x +K |\xi|_x^2 |\eta|_x^2.
\end{eqnarray*}
\end{Def}

The {\bf (MTW)} property  imposes hard constraints on the geometry of $(M,g)$. First, by Loeper's observation above, if $(M,g)$ satisfies {\bf (MTW)} then it must have nonnegative sectional curvatures. Moreover, as shown by Loeper and Villani in \cite{lv10},   the  {\bf (MTW)} property has some effects on the geometry of injectivity domains. They proved that if $(M,g)$ is nonfocal and
satisfies a stronger form of the {\bf (MTW)} condition, then all its injectivity domain must be uniformly convex. The aim of the present paper is to improve the result by Loeper and Villani by showing that the strong form of {\bf (MTW)} condition can be dropped. Before to state our main result, let us briefly recall the link between {\bf (MTW)} and the regularity of optimal transports with quadratic geodesic costs, which was the initial motivation for the introduction of the Ma--Trudinger--Wang tensor, see \cite{villaniSF}.\\

Let $\mu,\nu$ be two probability measures on $M$ and $c:M�\times M \rightarrow \R$ be the quadratic geodesic cost defined by
$$
c(x,y):= \frac{d(x,y)^2}{2} \qquad \forall\, (x,y) \in M \times M.
$$
The Monge problem from  $\mu$ to $\nu$ and cost $c$ consists in finding a measurable map $T:M\to M$ which
minimizes the cost functional 
$$
\int_M c(x,T(x))\,d\mu(x)
$$ 
under the constraint $T_\#\mu =\nu$ ($\nu$ is the image measure of $\mu$ by $T$). 
If $\mu$ is absolutely continuous, then according to McCann \cite{mccann01} this minimizing problem has a solution $T$, unique up to modification on a $\mu$-negligible set. 
A natural question is whether the optimal transport map can be expected to be continuous. To this purpose, we introduce the following definition.
\begin{Def}
We say that $(M,g)$ satisfies the {\em transport continuity property} (abbreviated \TCP ) if, whenever $\mu$ and $\nu$ are absolutely continuous measures with respect to the volume measure, with densities bounded away from zero and infinity, the optimal transport map $T$ from $\mu$ to $\nu$ with cost $c$  is continuous, up to modification on a set of zero volume. 
\end{Def}

The following results give necessary and sufficient conditions for  \TCP in terms of the {\bf (MTW)} property and convexity properties of injectivity domains, see \cite{FRV_LAST}. Theirs proofs are based on previous works by many authors, see \cite{figallibourbaki,FigLopdim2,fr09,KimMcC10,Kim:2008ys,loeper09,lv10,mtw05,villaniSF}.

\begin{Thm}
Assume that $(M,g)$ satisfies the \TCP condition. Then $(M,g)$ satisfies {\bf (MTW)} and all its injectivity domains are convex.
\end{Thm}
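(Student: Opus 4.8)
\emph{Plan of proof.} I would argue by contraposition: assuming that either {\bf (MTW)} fails, or that some injectivity domain $\I(x_0)$ is not convex, I would construct two absolutely continuous probability measures $\mu,\nu$ with densities bounded away from zero and infinity for which \emph{every} optimal transport map from $\mu$ to $\nu$ with cost $c$ is discontinuous on a set of positive volume, thus contradicting the \TCP property. Both constructions would rest on the same backbone: first exhibit a $c$-convex Kantorovich potential $\psi$, supported in a small neighbourhood of $x_0$, whose $c$-subdifferential $\partial_c\psi(\bar x)$ is \emph{disconnected} at some point $\bar x$ near $x_0$; then let $\mu$ be essentially concentrated near $\bar x$ and let $\nu$ charge a neighbourhood of each connected component of $\partial_c\psi(\bar x)$. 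The optimal map $T(x)=\exp_x(\nabla\psi(x))$ must then send a definite portion of the $\mu$-mass near $\bar x$ to each component, so it jumps across the interface between the two components; and by uniqueness of optimal maps (McCann \cite{mccann01}), this discontinuity cannot be removed by changing $T$ on a null set.

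\emph{Necessity of {\bf (MTW)}.} If $\S_{(x_0,v_0)}(\xi,\eta)<0$ for some $v_0\in\I(x_0)$ and orthonormal $\xi,\eta\in T_{x_0}M$, then $\exp_{x_0}(v_0)\notin\cut(x_0)$, so $c$ is smooth near $\bigl(x_0,\exp_{x_0}(v_0)\bigr)$ and the expansion \eqref{eq:MTW s} is legitimate. Following Loeper's analysis of the second-order structure of $c$ \cite{loeper09}, the negativity of $\S$ produces points $y_0,y_1$ near $\exp_{x_0}(v_0)$ and constants $\lambda_0,\lambda_1$ such that the $c$-convex function $\psi=\max\bigl\{-c(\cdot,y_0)+\lambda_0,\ -c(\cdot,y_1)+\lambda_1\bigr\}$ has both pieces active at $\bar x$ while the ``$c$-segment'' joining $y_0$ to $y_1$ as seen from $\bar x$ fails to lie in $\partial_c\psi(\bar x)$; hence $\partial_c\psi(\bar x)$ has at least two connected components, and the backbone above applies.

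\emph{Necessity of convexity.} If $\I(x_0)$ is not convex, I would use that it is open, star-shaped about the origin, with Lipschitz boundary $\TCL(x_0)$ (Itoh--Tanaka) to produce $v_0,v_1\in\I(x_0)$ and $t^*\in(0,1)$ with $v_{t^*}:=(1-t^*)v_0+t^*v_1\notin\overline{\I(x_0)}$, so that $d\bigl(x_0,\exp_{x_0}(v_{t^*})\bigr)<|v_{t^*}|_{x_0}$: along that direction the cut locus has been crossed. As shown in \cite{lv10,FRV_LAST}, this is incompatible with the ``connectedness of $c$-subdifferentials'' forced by the \TCP property: one can again build a two-point potential $\psi=\max\bigl\{-c(\cdot,y_0)+\lambda_0,\ -c(\cdot,y_1)+\lambda_1\bigr\}$ with $y_i=\exp_{x_0}(v_i)$, for which $y_0$ and $y_1$ lie in $\partial_c\psi(\bar x)$ for a suitable $\bar x$ near $x_0$ but there is a gap between them, located exactly where the tangent segment exits $\I(x_0)$ and $c(x_0,\cdot)$ is ``shorter than expected''. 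Again the backbone yields a discontinuous optimal map, contradicting the \TCP property.

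\emph{Main obstacle.} The delicate point in both cases is to promote a $c$-subdifferential that is disconnected \emph{at a single point} into a \emph{positive-volume} set of discontinuities of every optimal map, while keeping $\mu$ and $\nu$ absolutely continuous with densities bounded away from $0$ and $\infty$. This requires perturbing $\psi$ so that the map $x\mapsto\exp_x(\nabla\psi(x))$ is non-degenerate on each side of the interface --- a two-sided control of its Jacobian --- so that the push-forward of a bounded density is again a bounded density and each side of the interface carries a definite amount of $\mu$-mass; one then invokes the stability and uniqueness of optimal maps to conclude. For the convexity statement there is the additional task of extracting, from mere geometric non-convexity of $\I(x_0)$ together with the Lipschitz structure of $\TCL(x_0)$, an explicit configuration realizing a genuinely two-component $c$-subdifferential, which is where a compactness-and-continuity argument is needed (\cite{FRV_LAST}). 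The remaining ingredients --- the second-order Taylor expansion of $c$ away from $\cut(M)$, star-shapedness of $\I(x)$, and elementary properties of $c$-convex functions and Kantorovich duality --- are routine.
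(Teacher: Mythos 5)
This theorem is not proved in the paper: it is quoted as background, with the proof attributed to \cite{FRV_LAST} (and, for its ingredients, to \cite{loeper09,lv10} among others). Your sketch correctly reconstructs the strategy of that reference: contraposition, reduction of both failures (of {\bf (MTW)} and of convexity of $\I(x_0)$) to the existence of a $c$-convex potential with disconnected $c$-subdifferential at a point, and then the ``backbone'' step upgrading this to a genuinely discontinuous optimal map between measures with densities bounded away from zero and infinity. You have also correctly located where the real work lies --- the quantitative version of the backbone and the extraction of a two-component $c$-subdifferential from mere non-convexity of $\I(x_0)$ --- but your text only names these difficulties and defers them to \cite{FRV_LAST}; as a self-contained proof it is therefore incomplete, though as an account of the approach the paper relies on it is accurate.
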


\begin{Thm}
Assume that $M$ has dimension 2. 
Then  the \TCP condition holds if and only if $(M,g)$ satisfies {\bf (MTW)} and all its injectivity domains are convex.
\end{Thm}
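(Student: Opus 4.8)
One direction is already contained in the previous theorem: if $(M,g)$ satisfies the \TCP condition then it satisfies {\bf (MTW)} and all its injectivity domains are convex, and this is valid in every dimension. So the content of the statement is the converse, specialized to $n=2$: assuming {\bf (MTW)} and that every $\I(x)$ is convex, one must prove \TCP. By McCann's theorem the optimal map for the cost $c=d^2/2$ has the form $T(x)=\exp_x(\nabla\psi(x))$ for a $c$-convex Kantorovich potential $\psi$, and it is defined and continuous off the set where $\psi$ fails to be differentiable. Hence \TCP reduces to showing that such a $\psi$ is everywhere differentiable, equivalently that its $c$-subdifferential $\partial_c\psi(x)$ is a single point for every $x$, i.e. that $\psi$ is strictly $c$-convex.

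The engine is Loeper's maximum principle. The two hypotheses say exactly that the cost $c$ is \emph{regular} in the Ma--Trudinger--Wang sense: {\bf (MTW)} controls the sign of $\S_{(x,v)}$, while convexity of each $\I(x)$ is precisely the $c$-convexity of the relevant domains and guarantees that the $c$-segments occurring in the argument (images under $\exp_x$ of ordinary segments joining two points of $\I(x)$) remain in the region where the cost is smooth and {\bf (MTW)} holds. Loeper's theorem then yields that $c$-convex potentials have $c$-connected subdifferentials: if $y_0,y_1\in\partial_c\psi(x_0)$ then the entire $c$-segment from $y_0$ to $y_1$ with base $x_0$ lies in $\partial_c\psi(x_0)$.

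From this one deduces, following Loeper, that $\psi$ is in fact strictly $c$-convex: were $\partial_c\psi(x_0)$ to contain a nontrivial $c$-segment, then on one hand $\psi$ would be ``$c$-affine'' along the dual segment, and on the other hand a covering/volume estimate, using that the target density is bounded away from zero, would force $T$ restricted to the differentiability set of $\psi$ to essentially cover a full neighborhood, contradicting the presence of the segment. Strict $c$-convexity then upgrades to $C^1$ regularity of $\psi$ by the standard argument, and $T=\exp(\nabla\psi)$ is continuous up to a volume-null set, which is \TCP.

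The real obstacle — and the reason the argument goes through only in dimension $2$ — is the behaviour near $\cut(M)$. Running the scheme requires the cost to be genuinely smooth where it is differentiated, it requires {\bf (MTW)} (known a priori only on the open domains $\I(x)$) to persist up to $\TCL(x)$ and slightly beyond, and, most substantially, it requires a quantitative ``stay-away'' estimate keeping $\nabla\psi(x)$ uniformly inside $\I(x)$, away from $\TCL(x)$. The degenerate form of {\bf (MTW)} (the case $K=0$) does not by itself supply such an estimate, and in higher dimensions the interplay of conjugate points, focal points and the cut locus is delicate. In dimension $2$ the tangent cut locus is a Lipschitz curve and the geometry is rigid enough to establish the needed extension and stay-away properties — either directly or via a closure/compactness argument that perturbs the metric into the nondegenerate regime {\bf (MTW$(K,C)$)} with $K>0$ and passes to the limit — and this is the technical heart of the proof.
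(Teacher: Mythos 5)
The paper does not give a proof of this statement: it is quoted as a known result from Figalli--Rifford--Villani \cite{FRV_LAST}, with the remark that its proof rests on earlier work (in particular \cite{FigLopdim2,fr09,loeper09,lv10,mtw05,villaniSF}). So there is no in-paper argument to compare yours against; the comments below bear on your sketch itself.

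Your framework is the right one and matches the strategy in \cite{FRV_LAST}: reduce \TCP to $C^1$ regularity of the Kantorovich potential via McCann's representation $T=\exp(\nabla\psi)$, invoke Loeper's maximum principle (for which {\bf (MTW)} together with convexity of the $\I(x)$'s is exactly what is needed for $c$-segments to stay in the smooth, MTW-controlled region), and rule out a flat piece in the $c$-subdifferential by a covering argument using the density bounds. Where your account goes astray is in the explanation of why $n=2$ is needed. The Lipschitz regularity of $\TCL(x)$ is not special to surfaces --- it holds in every dimension by Itoh--Tanaka and Li--Nirenberg, as the present paper itself records in Appendix~B --- so that cannot be the reason. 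Nor is the argument a perturbation to strict {\bf (MTW$(K,C)$)} with $K>0$ followed by a limit. The dimension-two input is the Figalli--Loeper theorem \cite{FigLopdim2} on $C^1$ regularity for the two-dimensional Monge--Amp\`ere equation under the \emph{degenerate} MTW condition: its proof exploits that in the plane a nontrivial segment in $\partial_c\psi(x_0)$ has codimension one, so an Alexandrov-type volume comparison already contradicts the two-sided density bounds without any strict positivity constant in the MTW inequality. In higher dimensions such a segment has larger codimension and that estimate degenerates, which is precisely why the weak {\bf (MTW)} condition is not known to suffice there. The ``stay-away'' estimates you mention do enter the proof, but they are consequences of the assumed convexity of the $\I(x)$'s together with {\bf (MTW)} (essentially the content of \cite{fr09,lv10}), not the source of the restriction to $n=2$.
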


Let us now state our main result. The nonfocal domain at some $x \in M$ is defined as 
$$
\NF(x) := \Bigl\{ v \in T_xM \, \vert \, d_{tv} \exp_x \mbox{ is not singular for any } t \in [0,1]  \Bigr\}.
$$
It is an open star-shaped subset of $T_xM$ whose boundary $\TFL(x)$ is called the tangent focal domain at $x$.
The set $\bar{\NF}(x) = \NF(x) \cup \TFL(x)$ can be shown to be locally semiconvex (see \cite{cr10} and Appendix \ref{App}), and the following inclusion always holds:
$$
\I(x) \subset \NF(x) \qquad \forall\, x\in M,
$$
see for instance \cite[Corollary~3.77]{GHL} or \cite[Problem 8.8]{villaniSF}.

\begin{Def}
We say that $(M,g)$ is nonfocal provided 
$$
\TCL(x) \subset \NF(x) \qquad \forall\, x\in M.
$$ 
\end{Def}

In \cite{lv10}, Loeper and Villani proved that if $(M,g)$ is nonfocal
and satisfies the following strict form of the {\bf (MTW)} condition,
$$
\S_{(x,v)}(\xi,\eta)\geq K |\xi|^2_x |\eta|^2_x \qquad \forall \,x \in M, \, \forall \,v \in \I(x), \, \forall \,\xi, \eta \in T_xM
$$
for some $K>0$, then all its injectivity domain are uniformly convex. Our main result shows that the {\bf (MTW)} condition alone is sufficient for the convexity of injectivity domains.

\begin{Thm}\label{ConNF}
Let $(M,g)$ be a nonfocal Riemannian manifold satisfying {\bf (MTW)}. Then all injectivity domains of $M$ are convex.
\end{Thm}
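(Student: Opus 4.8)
The plan is to fix a point $x\in M$ and show that $\I(x)$ is convex by a continuity/deformation argument, exploiting the fact that on a nonfocal manifold the tangent cut locus lies strictly inside the nonfocal domain, hence $\TCL(x)$ is semiconcave and the relevant geometric quantities vary smoothly. The starting observation is the well-known reformulation of the \textbf{(MTW)} condition in terms of the cost-exponential: setting $c(x,y)=d(x,y)^2/2$, for $\bar x\in M$ and $\bar y\notin\cut(\bar x)$ one has the $c$-exponential map, and the \textbf{(MTW)} inequality becomes a one-sided bound on the derivative along cost-segments of the function measuring the "$c$-convexity defect''. The classical mechanism, going back to Loeper and to Loeper--Villani, is that \textbf{(MTW)} forces the set $\I(x)$, viewed through the chart given by $-\nabla_x c(x,\cdot)$, to have a supporting-type property along geodesics; on a nonfocal manifold there is enough regularity of $\TCL(x)$ to run this argument globally rather than just perturbatively.

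\textbf{Key steps.} First I would recall from the nonfocality hypothesis and the Itoh--Tanaka / semiconcavity results (cited in the excerpt, and in Appendix~\ref{App}) that $\ov{\NF}(x)$ is locally semiconvex and $\TCL(x)\subset\NF(x)$, so that $d_{v}\exp_x$ is invertible at every $v\in\TCL(x)$ and the cut time function, hence the boundary $\TCL(x)$, is locally semiconcave with a uniform modulus; this gives at $\haus^{n-1}$-a.e.\ point $v\in\TCL(x)$ a well-defined outward normal, and at such points the "exponential'' description and the "distance'' description of the cut locus agree. Second, I would set up the contradiction: if $\I(x)$ were not convex, there would be two points $v_0,v_1\in\I(x)$ whose segment $[v_0,v_1]$ exits $\I(x)$, so it crosses $\TCL(x)$; pushing $v_0,v_1$ slightly one may assume the segment is "tangent from outside'' at an interior crossing point $\bar v$ where $\TCL(x)$ is differentiable and the second-order (semiconcavity) information is available. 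Third — the heart of the matter — I would transport this configuration to the cotangent picture at $y=\exp_x(\bar v)$: the point $\bar v\in\TCL(x)$ corresponds to $x$ lying on $\cut(y)$, and one considers the two minimizing geodesics from $y$ back towards a point near $x$; the \textbf{(MTW)} inequality, integrated along the cost-segment joining the two lifts in $T_y^*M$, yields that the function $t\mapsto c(x_t,y)$ (for a suitable curve $x_t$) is, up to the curvature correction, convex, which is incompatible with $\bar v$ being a strict "outward'' tangency of $[v_0,v_1]$ to $\TCL(x)$. Concretely this is the step where one shows, à la Loeper--Villani, that \textbf{(MTW)} implies the function defining $\I(x)$ in the good chart is quasi-convex along lines; the nonfocal hypothesis is exactly what guarantees that the chart (built from $\exp_x^{-1}$ near $\TCL(x)$) is a genuine diffeomorphism up to the boundary, so that no focalization destroys the argument.

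\textbf{Main obstacle.} I expect the principal difficulty to be replacing the \emph{strict} \textbf{(MTW)} used by Loeper--Villani (which gave uniform convexity by a quantitative second-order inequality with a gain $K|\xi|^2|\eta|^2$) by the \emph{degenerate} \textbf{(MTW)} with $K=0$: without the strict gain one only gets convexity in a limiting sense, so the tangency configuration must be handled at a point where the one-sided second derivative of $\TCL(x)$ genuinely vanishes, and one must rule out the borderline case. The likely remedy — and the technically heaviest part — is a perturbation/regularization argument: either perturb the metric (or the relevant cost) to make \textbf{(MTW)} strict, prove uniform-in-perturbation convexity with a modulus depending only on geometric bounds (injectivity radius, focal radius, curvature, diameter), and pass to the limit; or, more in the spirit of \cite{FRV_LAST}, localize near $\bar v$, use the semiconcavity of $\TCL(x)$ to extract a second-order expansion, and feed it into the integrated \textbf{(MTW)} inequality to derive a contradiction with the sign of the defect. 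Either way the careful bookkeeping of the change of variables between $T_xM$, $T_yM$, and the cotangent fibres — and checking that all error terms are controlled uniformly thanks to nonfocality — is where the real work lies.
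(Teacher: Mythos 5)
There is a genuine gap, and it sits exactly at the place you flag as ``the heart of the matter''. When you integrate the \textbf{(MTW)} inequality along the cost-segment in $T_{y_t}M$ joining the two lifts $\bar q_t=-d_{v_t}\exp_x(v_t)$ and $q_t=\exp_{y_t}^{-1}(x)$, the tensor must be evaluated at velocities $(1-s)\bar q_t+s q_t$ which in general do \emph{not} lie in $\I(y_t)$; the hypothesis \textbf{(MTW)} says nothing there. The extended tensor $\ov{\S}$ is defined on $\NF(y_t)$ thanks to nonfocality, but the only lower bound one can extract for it is of the form $-D\,\rho_{y_t}(q,\I(y_t))\,|\xi|^2|\eta|^2$ (Lemma~\ref{tenseurine}), i.e.\ an error proportional to the distance of the segment $[\bar q_t,q_t]$ to $\I(y_t)$ --- which is governed by the convexity defect of the injectivity domains at the \emph{intermediate} points $y_t$, precisely what you are trying to prove. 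Your contradiction at a single outward-tangency point $\bar v$ therefore does not close: at the maximum of the defect function $h(t)=|v_t|_x^2/2-d(x,y_t)^2/2$ one only gets $0\ge \ddot h(\bar t)\ge -c$ with $c\ge 0$ the (circularly defined) error, and with $K=0$ there is no strict gain to beat it. Neither of your proposed remedies supplies the missing mechanism: perturbing the metric (or cost) to make \textbf{(MTW)} strict is not available --- the MTW condition is not stable under perturbation and no approximation of a degenerate-MTW nonfocal manifold by strict-MTW ones is known --- and ``localize and expand to second order'' does not break the circularity just described.

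The paper's resolution is structurally different and is the real content of the proof: a continuous induction on the radius $r$ (the set of $r$ such that $B_x(r)\cap\I(x)$ is convex for all $x$ is shown to be open and closed in $[0,\infty)$), combined with a bootstrap. Assuming convexity at radius $r$, segments in $B_x(r+\beta)$ produce intermediate error terms of size $O(\beta)$, whence $(K\beta)$-radial-semiconvexity of $B_x(r+\beta)\cap\I(x)$ (Step~1); then feeding an $A$-radial-semiconvexity bound back into the differential inequality for $h$ --- via the maximum-principle Lemma~\ref{LEMineq}, whose second part converts $c\le\|h\|_\infty+\epsilon$ into $\|h\|_\infty\le\epsilon/3$ --- improves $A$ to $A/2$ (Step~2), and iterating kills the defect. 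You correctly identify the role of nonfocality (the extended tensor is defined and controlled on a uniform neighbourhood of $\I$) and the obstruction posed by the degenerate condition, but the self-improving estimate that replaces Loeper--Villani's strict gain is absent from your proposal, and without it the argument does not go through.
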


Our proof is based on techniques relying on the extended Ma--Trudinger--Wang tensor, which were introduced by the first and third author in \cite{fr09}, together with bootstrap arguments. In fact, 
Theorem \ref{ConNF} provides a partial answer to a conjecture formulated by Villani in \cite{villani11}.\\

\noindent {\bf Villani's Conjecture.} 
Let $(M,g)$ be a smooth compact Riemannian manifold satisfying  {\bf (MTW)}. Then all its injectivity domains are convex. \\

We will address the above conjecture in the case of analytic surfaces in a forthcoming paper \cite{FGR2d}. In fact, we take opportunity of the present paper to present a slight improvement (Theorem \ref{ConNFbis})    of Theorem  \ref{ConNF} that will be useful in  \cite{FGR2d}.\\

The paper is structured as follows: In Section \ref{secPREM} we provide some preliminary results about injectivity and nonfocal domains.
Then, Section \ref{secCONV} contains the proof of Theorem \ref{ConNF}. Section \ref{sect:general} is devoted to the proof of Theorem \ref{ConNFbis} whose core of the proof follows the strategy developed in Section $3$ together with additional technicalities, and in Section \ref{sect:conclusion}
we show how to recover Loeper-Villani's result with our techniques.
Finally, in the appendices we collect some useful results on semiconvex functions and tangent cut loci.

\section{Preliminary results} \label{secPREM}
Let $M$ be a smooth compact Riemannian manifold, and denote by $\UM \subset TM$ the unit tangent bundle. Let us introduce some definitions and notation.

The distance function to the cut locus at some $x \in M$, 
$t_{cut} :\UM \rightarrow (0,\infty)$, is defined as
\begin{eqnarray*}
t_{cut}(x,v) & := & \sup \Bigl\{ t \geq 0 \, \vert \, tv \in \I(x) \Bigr\} \\
& = &  \max \Bigl\{ t\geq 0  \, \vert \,   d(x, \exp_x(tv)) = t \Bigr\}.
\end{eqnarray*}
Then, for every $x\in M$, there holds
\begin{multline*}
\I(x)  = \Bigl\{ tv \, \vert \, 0\leq t< t_{cut}(x,v),\ v\in U_xM \Bigr\} ,\qquad  \TCL(x)= \Bigl\{ t_{cut}(x,v)v \, \vert \, v \in U_xM \Bigr\}. 
\end{multline*}
For every $x\in M$, we denote by $\rho_x$ the radial distance on $T_xM$, that is
$$
\rho_x(v,w) := \left\{ \begin{array}{ccl}
 |v|_x + |w|_x & \mbox{ if } & g_x(v,w) \neq |v|_x |w|_x \\
  |v-w|_x & \mbox{ if } & g_x(v,w) = |v|_x |w|_x.
  \end{array}
  \right.
$$
Then the radial distance to $\I(x)$ satisfies for any $v \in T_xM$,
\begin{eqnarray*}
\rho_x \bigl(v,\I(x) \bigr) &:=& \inf \Bigl\{ \rho_x(v,w) \, \vert \, w \in \I(x) \Bigr\}  \\
&=& \left\{
\begin{array}{cl} \left| v - t_{cut}\left(x,\frac{v}{|v|_x}\right) \frac{v}{|v|_x} \right|_x & \mbox{ if } v \notin \I(x),\\
0 & \mbox{ otherwise.}
\end{array}
\right.
\end{eqnarray*}
For every $v \in \TCL(x)$ we set
 $$
 \delta(v) := \max  \Bigl\{ |v-w|_x  \, \vert \, w \in \TCL(x) \, \mbox{ s.t.} \exp_x v = \exp_x w \Bigr\},
 $$
for every compact set $V(x) \subset T_xM$
$$
\delta(V(x)) := \min \Bigl\{ \delta(v)\, \vert \, v \in V(x)\cap \TCL(x) \Bigr\},
$$
and finally for every compact set $V \subset TM$ we let 
$$
\delta(V) :=\min \Bigl\{ \delta \bigl(V(x)\bigr) \, \vert \, x\in M\Bigr\},
$$ 
where for each $x\in M$, $V(x)$ denotes the fiber of $V$ over $x$ (which might be empty,
in which case $\delta(V(x))=+\infty$). Notice that nonfocal compact Riemannian manifolds satisfy $\delta(TM)>0$. However, Riemannian manifolds satisfying $\delta(TM)>0$ are not necessarily nonfocal, as the property $\delta(TM)>0$ only rules out purely focal velocities.


\begin{Lem}\label{lem1}
Let $V$ be a compact subset  of $TM $ with $\delta(V)>0$ such that each $V(x)\neq \emptyset$ is starshaped with respect to the origin. Then, there exists $K >0$ such, that for every $(x,v) \in V$,
$$
 \rho_x \bigl(v,\I(x)\bigr) \leq K \left(  |v|_x^2 - d\bigl(x,\exp_{x}(v)\bigr)^2 \right).
$$
In particular assume that $(M,g)$ is nonfocal. Then, there exists $K >0$ such, that for every $x\in M$ and every $v\in T_xM$,
$$
 \rho_x \bigl(v,\I(x)\bigr) \leq K \left(  |v|_x^2 - d\bigl(x,\exp_{x}(v)\bigr)^2 \right).
$$
\end{Lem}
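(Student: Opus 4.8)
The plan is to show that the ``deficit'' $|v|_x^2-d(x,\exp_x v)^2$ controls linearly the radial distance $\rho_x(v,\I(x))$, the mechanism being that once a minimizing ray is continued past a tangent cut point the distance to $x$ must drop at a definite rate, because at every tangent cut point lying over $V$ there are two minimizing geodesics whose terminal velocities are uniformly transversal. First I would dispose of the case $v\in\overline{\I(x)}$, where $\rho_x(v,\I(x))=0$ and the geodesic ray through $v$ is minimizing up to $v$, so both sides vanish. So I may assume $v\notin\overline{\I(x)}$, and write $v=ru$ with $u\in U_xM$, $r=|v|_x$, $t_c:=t_{cut}(x,u)<r$, so that $\rho_x(v,\I(x))=r-t_c$, and by starshapedness of $V(x)$ the vector $\bar v:=t_c u$ lies in $V(x)\cap\TCL(x)$.

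The heart of the argument is a \emph{uniform transversality} statement at tangent cut points over $V$: there is $\beta_0>0$ such that for every $(x,\bar v)\in V$ with $\bar v\in\TCL(x)$ one can find two minimizing geodesics from $x$ to $p:=\exp_x\bar v$ whose velocities at $p$ form an angle at least $\beta_0$. Indeed, since $\delta(V)>0$ there is $w\in\TCL(x)$ with $\exp_x w=p$ and $|w-\bar v|_x\ge\delta(V)$; then $|w|_x=|\bar v|_x=t_c$, so $t\mapsto\exp_x(t\bar v/t_c)$ and $t\mapsto\exp_x(tw/t_c)$ are two \emph{distinct} minimizing geodesics from $x$ to $p$, and if their velocities at $p$ agreed, uniqueness of geodesics would force $w=\bar v$, a contradiction. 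To upgrade this qualitative fact to a uniform bound I would argue by compactness and lower semicontinuity: $\TCL$ is compact (continuous image of $\UM$ under $(x,u)\mapsto(x,t_{cut}(x,u)u)$, recalling that $t_{cut}$ is continuous on $\UM$), hence so is $V\cap\TCL$; the set-valued map sending $(x,\bar v)$ to the admissible $w$'s has closed graph; and the terminal velocity depends continuously on $(x,w)$. Hence $(x,\bar v)\mapsto$ (smallest angle between terminal velocities over admissible $w$'s) is lower semicontinuous and, by the previous sentence, strictly positive on the compact set $V\cap\TCL$, so it is bounded below by some $\beta_0>0$. This is the step I expect to be the main obstacle: it is precisely where $\delta(V)>0$ (the absence of purely focal velocities) is used, and the separation $\delta(V)$ lives at $x$ while the quantity we need lives at $p$, so only a limiting argument — not a direct estimate — relates the two.

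Granting this, I would fix $r_0\in(0,\inj(M))$ small, depending only on curvature bounds of the compact $M$, so that any broken geodesic made of two minimizing segments of common length $\epsilon\le r_0$ meeting with turning angle $\ge\beta_0$ has its endpoints at distance $\le(2-c_0)\epsilon$, for a constant $c_0=c_0(\beta_0,M)>0$ (a Euclidean ``corner-cutting'' estimate in geodesic normal coordinates at the corner, up to a curvature correction). In the regime $\epsilon:=r-t_c\le r_0$: let $\gamma_2$ be the second minimizing geodesic from $x$ to $p=\exp_x\bar v$ provided by the transversality step, and take as competitor the path following $\gamma_2$ for time $t_c-\epsilon$ and then a shortest path to $\exp_x v$. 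Applying the corner-cutting estimate at $p$ to the two length-$\epsilon$ segments — one along $\gamma_2$, one along the ray $t\mapsto\exp_x(tu)$, which meet at $p$ with turning angle $\ge\beta_0$ — gives $d(x,\exp_x v)\le(t_c-\epsilon)+(2-c_0)\epsilon=t_c+(1-c_0)\epsilon$, whence $r-d(x,\exp_x v)\ge c_0\epsilon=c_0\,\rho_x(v,\I(x))$; multiplying by $r+d(x,\exp_x v)\ge r\ge\inj(M)$ yields $\rho_x(v,\I(x))\le\frac1{c_0\inj(M)}\bigl(|v|_x^2-d(x,\exp_x v)^2\bigr)$.

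In the complementary regime $\epsilon>r_0$ I would argue by plain compactness: on the compact set $\{(x,v)\in V:\ v\notin\overline{\I(x)},\ r-t_{cut}(x,v/|v|_x)\ge r_0\}$ the continuous non-negative function $|v|_x^2-d(x,\exp_x v)^2$ vanishes nowhere (it vanishes only on $\overline{\I(x)}$), hence is $\ge h_0>0$ there, while $\rho_x(v,\I(x))\le|v|_x\le\max_V|v|_x=:R_0$; so the inequality holds with constant $R_0/h_0$. Taking $K$ to be the larger of the two constants proves the first assertion. For the ``in particular'', apply the first part with $V$ the closed disk bundle of radius $2\,\diam(M)$ — its fibers are balls, and $\delta(V)\ge\delta(TM)>0$ since $M$ is nonfocal — and dispose of the range $|v|_x>2\,\diam(M)$ using the crude bounds $\rho_x(v,\I(x))\le|v|_x$ and $|v|_x^2-d(x,\exp_x v)^2\ge|v|_x^2-\diam(M)^2\ge\tfrac34|v|_x^2$.
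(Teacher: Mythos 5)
Your proposal is correct and follows essentially the same strategy as the paper's proof: both reduce to velocities just past the tangent cut locus and exploit the uniform transversality, guaranteed by $\delta(V)>0$, of the terminal velocities of two distinct minimizing geodesics reaching $\exp_x\bar v$, in order to show that the deficit $|v|_x^2-d(x,\exp_x v)^2$ grows at a definite linear rate in $\epsilon=\rho_x(v,\I(x))$, the large-deficit regime being handled by compactness in both cases. The only differences are in implementation: the paper quotes \cite[Prop.~C.5(a)]{lv10} for the uniform transversality (which you prove directly by a closed-graph/lower-semicontinuity argument) and realizes the first-order expansion via a smooth upper barrier for $d(x,\cdot)^2$ touching at $\exp_x\bar v$ with gradient $2\psi_x(w)$, whereas you use a broken-geodesic competitor and a corner-cutting estimate --- two equivalent ways of extracting the same first variation.
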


\begin{proof}[Proof of Lemma \ref{lem1}] 
By compactness of $M$, the geodesic distance (and thus the quantity $d(x,\exp_{x}(v))$) is uniformly bounded. Then since the right-hand side in the inequalities is quadratic in $|v|_x$ while the left-hand size has linear growth, it is sufficient to show that there is $\delta>0$ such that 
$$
 |v|_x^2  - d\bigl(x,\exp_{x}(v)\bigr)^2 \leq  \delta \quad \Longrightarrow \quad \rho_x \bigl(v,\I(x)\bigr) \leq K \left(  |v|_x^2 - d\bigl(x,\exp_{x}(v)\bigr)^2 \right),
$$
for every $(x,v)$ as required. First, for every $(x,v) \in V$ we set $$\psi_x(v):= d_v \exp_x(v) ,$$ so that if $\gamma:[0,1] \rightarrow M$ is a constant-speed minimizing geodesic path going 
from $x$ to $y$, with initial velocity $v_0$ and final velocity $v_1$, the map $\psi_x$ is defined by $v_0 \mapsto v_1$.
Since $\delta(V) > 0$ there exists $\Delta>0$ such that, for every $x\in M$ with $V(x) \neq \emptyset$ and every $v\in V(x) \cap \TCL(x)$, there is a geodesic  path starting at $x$ with initial velocity $w$ (with $|w|_x=|v|_x$), and finishing at $y=\exp_v(x)$ with final velocity $\psi_x(w)$, satisfying
\begin{eqnarray}\label{Delta1}
|v|_x^2 - \langle \psi_x(v), \psi_x(w) \rangle_y > \Delta,
\end{eqnarray}
see for instance \cite[Proposition C.5(a)]{lv10}.
Let $v\in \TCL(x)\cap V(x)$ and $y:=\exp_x(v)$ be fixed. 
As before, consider a minimizing geodesic path from $x$ to $y$ with initial velocity $w$ satisfying (\ref{Delta1}).
Since $d^2(x,\cdot)$ is locally semiconcave on $M$, $2\psi_x(w)$ is a supergradient for $d^2(x,\cdot)$ at $y$, and
the distance from $x$ to its cut locus is uniformly bounded from below (see \cite[Definition 10.5 and Proposition 10.15]{villaniSF}),
it is easy to show the existence of a smooth function $h: M \rightarrow \R$, whose $C^2$ norm does not depend on $x$ and $v$, and such that  
$$
\left\{
\begin{array}{l}
d(x,y)^2 = h(y) = |v|_x^2,\\
\nabla h(y) = 2\psi_x(w) \\
d(x,z)^2 \leq  h(z), \ \forall\, z \in M,
\end{array}
\right.
$$
see for instance \cite[Proposition C.6]{lv10}.
This gives 
$$
|(1+\epsilon) v|_x^2 - d\bigl( x, \exp_x((1+\epsilon) v)\bigr)^2  \geq (1+\epsilon)^2 |v|_x^2 - h\bigl( \exp_x((1+\epsilon) v)\bigr) \qquad \forall\, \epsilon.
$$
Hence, if $C_0$ denotes a uniform bound for the $C^2$ norm of $h$ independent of $x$ and $v$, we get
$$
|(1+\epsilon) v|_x^2 - d\bigl( x, \exp_x((1+\epsilon) v)\bigr)^2  \geq 2 \epsilon \left( |v|_x^2 - \langle \psi_x(v),\psi_x(w)\rangle \right) - C_0 \epsilon^2 \qquad \forall \epsilon.
$$
Then, using (\ref{Delta1}), we deduce that
$$
 |(1+\epsilon) v|_x^2 - d\bigl( x, \exp_x((1+\epsilon) v)\bigr)^2  \geq \epsilon \Delta \qquad \forall\, \epsilon \in  (-\epsilon_0,\epsilon_0),
$$
where $\epsilon_0:=\Delta/C_0$.
Since
$$
\rho_x \bigl((1+\epsilon) v, \I(x)\bigr) = | (1+\epsilon) v - v|_x = \epsilon |v|_x,
$$
we finally obtain
$$
 \rho_x \bigl((1+\epsilon) v, \I(x)\bigr) \leq
\frac{|v|_x}{\Delta} \Bigl(   |(1+\epsilon) v|_x^2 - d\bigl( x, \exp_x((1+\epsilon) v)\bigr)^2  \Bigr)   \qquad \forall\, \epsilon \in  (-\epsilon_0,\epsilon_0).
$$
To conclude the proof it suffices to observe that, by a simple compactness argument together with the fact that each $V(x)\neq \emptyset$ is starshaped, one can easily check that
there exists $\delta>0$ such that any $w\in V(x) \setminus \I(x)$, with $ |w|_x^2  - d\bigl(x,\exp_{x}(w)\bigr)^2 \leq  \delta $, has
the form $(1+\epsilon)v$ for some $v\in \TCL(x)\cap V(x)$ and $\epsilon \in [0,\epsilon_0)$.
\end{proof}

\begin{Lem}\label{lem2}
There exists $K >0$ such that for every $(x,v) \in TM$,
$$
K^{-1} \rho_x \bigl(v,\I(x)\bigr) \leq   \rho_y \bigl(w,\I(y)\bigr) \leq K   \rho_x \bigl(v,\I(x)\bigr) 
$$
and for every $(x,v)\in TM$ with $v\in \I(x)$,
$$
K^{-1} \rho_x \bigl(v,\TFL(x)\bigr) \leq   \rho_y \bigl(w,\TFL(y) \bigr) \leq K   \rho_x \bigl(v,\TFL(x)\bigr),
$$
where $y=\exp_x(v)$ and $w = -d_v\exp_x(v)= - \psi_x(v)$, so in particular $x=\exp_y(w)$ . 
\end{Lem}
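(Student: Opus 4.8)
The plan is to realise the passage $(x,v)\mapsto(y,w)$ as an involution and reduce to a one–sided estimate. Set $\Phi(x,v):=\bigl(\exp_x(v),\,-d_v\exp_x(v)\bigr)$ on $TM$. Reversing geodesics gives $\Phi\circ\Phi=\mathrm{id}$, and since $\Phi$ is built from the smooth maps $\exp$ and $d\exp$ it is a diffeomorphism of $TM$ interchanging $(x,v)$ and $(y,w)$. Moreover $\Phi$ respects the relevant strata: it maps $\{v\in\I(x)\}$ onto itself (if $y=\exp_x(v)\notin\cut(x)$ then the reversed minimizing geodesic lands at $x\notin\cut(y)$ by symmetry of the cut locus, so $w\in\I(y)$, and conversely), its boundary $\{v\in\TCL(x)\}$ onto itself (symmetry of the cut locus along a geodesic), and $\{v_*\in\TFL(x)\}$ into $\{w_*\in\TFL(y)\}$ (symmetry of conjugate points along a geodesic, together with a short index–form argument excluding an earlier conjugate point of $\exp_x(v_*)$ between $x$ and $\exp_x(v_*)$). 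Consequently, after proving a one–sided bound one gets the other by applying it to $\Phi(x,v)$; so it suffices to find $K$ with $\rho_y(w,\I(y))\le K\rho_x(v,\I(x))$ for all $(x,v)\in TM$, and $\rho_y(w,\TFL(y))\le K\rho_x(v,\TFL(x))$ for all $(x,v)$ with $v\in\I(x)$.

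For the injectivity bound, recall (as just before Lemma~\ref{lem1}) that $\rho_x(v,\I(x))=\max\{|v|_x-t_{cut}(x,v/|v|_x),\,0\}$, fix $0<t_{\min}\le T_{\max}<\infty$ bounding $t_{cut}$ on $UM$ (compactness of $M$), and fix a Riemannian metric on $TM$ with distance $\dist_{TM}$. If $v\in\I(x)$ both sides vanish (using $w\in\I(y)$). If $\rho:=\rho_x(v,\I(x))\ge\delta_0$ for a small $\delta_0$ fixed below, then $|v|_x\le\rho+T_{\max}\le(1+T_{\max}/\delta_0)\rho$, while $\rho_y(w,\I(y))\le|w|_y=|v|_x$ because $0\in\I(y)$ and geodesics have constant speed. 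The main case is $0<\rho<\delta_0$: write $v=(1+\epsilon)v_*$ with $v_*:=t_{cut}(x,v/|v|_x)\,v/|v|_x\in\TCL(x)$, $\epsilon=\rho/|v_*|_x$, and set $y_*=\exp_x(v_*)$, $w_*=-d_{v_*}\exp_x(v_*)\in\TCL(y_*)$. Estimating the arc $t\mapsto\exp_x((1+t\epsilon)v_*)$ gives $d(y,y_*)\le\epsilon|v_*|_x=\rho$; since $(x,v),(x,v_*)$ lie in a fixed compact part of $TM$ on which $\Phi$ is Lipschitz, $\dist_{TM}\bigl((y,w),(y_*,w_*)\bigr)\le C_\Phi\rho$; choosing $\delta_0$ so small that this forces $|w|_y\ge t_{\min}/2$, the normalisation to $UM$ is Lipschitz there, so $\dist_{UM}\bigl((y,w/|w|_y),(y_*,w_*/|w_*|_{y_*})\bigr)\le C'\rho$. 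Invoking the Itoh–Tanaka theorem \cite{it01,cr10} in the form that $t_{cut}$ is Lipschitz on $UM$ with a constant $L$ \emph{uniform in $M$} (compactness again), and using the exact identities $|w|_y-|w_*|_{y_*}=\epsilon|v_*|_x=\rho$ and $|w_*|_{y_*}=|v_*|_x=t_{cut}(y_*,w_*/|w_*|_{y_*})$, we get $|w|_y-t_{cut}(y,w/|w|_y)\le(1+LC')\rho$, hence $\rho_y(w,\I(y))\le(1+LC')\rho$. Take $K=\max\{1+T_{\max}/\delta_0,\,1+LC'\}$.

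The tangent–focal bound follows from the same three–regime scheme with $t_{cut}$ replaced by the focal time $t_{foc}$, keeping $v\in\I(x)$ throughout (so $\Phi$ is an involution on the relevant set). The new ingredients are: for $v\in\I(x)$ the geodesic $t\mapsto\exp_x(tv)$ is minimizing, hence carries no point conjugate to either endpoint in its interior, so $w\in\NF(y)$ strictly; the symmetry of conjugate (hence focal) points along a geodesic recalled above; and the local semiconvexity of $\bar{\NF}(x)$ (\cite{cr10} and the appendix), which yields a Lipschitz bound for $t_{foc}$ on compact subsets of $\{t_{foc}<\infty\}$, uniform in $M$ — together with continuity of the focal radius $x\mapsto\min_u t_{foc}(x,u)$ to treat directions where $t_{foc}=+\infty$. (The fully degenerate case $\TFL(x)=\emptyset$, where $\rho_x(v,\TFL(x))=+\infty$, requires only a separate direct check that then $\TFL(y)=\emptyset$ as well.) I expect the genuine obstacle to be exactly the \emph{uniformity over $x\in M$} of the Lipschitz constants above — upgrading the pointwise regularity of $t_{cut}(x,\cdot)$ and $t_{foc}(x,\cdot)$ from Itoh–Tanaka and its focal analogue to bounds depending only on the compact manifold. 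That is the one step needing the quantitative results collected in the appendix rather than elementary comparison geometry; granting it, the rest is the bookkeeping sketched above.
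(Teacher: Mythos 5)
Your overall architecture (reduce to a one--sided bound by the reversal involution; split into $\rho=0$, $\rho\geq\delta_0$, $0<\rho<\delta_0$; compare $t_{cut}$ at the image point with $t_{cut}$ at the cut velocity via a Lipschitz estimate for the cut time) matches the paper's: the paper compares $t_{cut}(z,e_{w'})$ with $t_{cut}(y,e_w)$, where $z=\exp_x(\bar v)$ and $y=\exp_x(v)$ lie on the same geodesic, and applies a Lipschitz estimate for $t_{cut}$ to close the argument.

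The gap is in the tool you invoke. You cite ``the Itoh--Tanaka theorem in the form that $t_{cut}$ is Lipschitz on $UM$ with a constant uniform in $M$.'' That statement is \emph{false} for a general compact Riemannian manifold: on $UM$, $t_{cut}$ is Lipschitz in the $v$-variable uniformly in $x$, but Lipschitz continuity jointly in $(x,v)$ can fail near purely focal velocities. The paper's Theorem~\ref{cutlip} records exactly this dichotomy: item~(1) gives only the uniform Lipschitz continuity of $I(x)$ in $v$ \emph{plus} Lipschitz continuity of $t_{cut}$ under perturbations of $x$ \emph{along the geodesic direction}; joint Lipschitz continuity on $UM$ is only asserted under the extra hypotheses $\delta(TM)>0$ (item~2) or $\dim M=2$ (item~3). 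It is precisely to stay within the safe geodesic-direction regime that the paper notes ``since $y$ is perturbed along the geodesic flow, Theorem~\ref{cutlip} gives \ldots'' before applying the Lipschitz bound. Your pair $(y,e_w)$, $(y_*,e_{w_*})$ is in fact a geodesic-direction perturbation (both $y_*=\exp_x(v_*)$ and $y=\exp_x((1+\epsilon)v_*)$ lie on $\exp_x(\R v_*)$, and $e_w$, $e_{w_*}$ are the corresponding reversed geodesic velocities), so your argument can be saved by replacing the appeal to full $UM$-Lipschitz continuity with the geodesic-direction statement of Theorem~\ref{cutlip}(1). But you do not make that observation, and the closing paragraph misdiagnoses the obstacle: the difficulty is not ``uniformity over $x$ of the Lipschitz constant'' in an otherwise-available Lipschitz bound, but the outright failure of joint Lipschitz continuity of $t_{cut}$ away from the geodesic direction.

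For the $\TFL$ estimate the situation is genuinely simpler and your worry about the ``focal analogue'' is misplaced: Theorem~\ref{foclip} gives that $t_{foc}$ \emph{is} uniformly Lipschitz on all of $UM$ (no geodesic-direction restriction), and together with the smoothness of the map $(x,v)\mapsto(y,w)$ on $\{v\in\I(x)\}\subset\{v\in\NF(x)\}$ this is what the paper means by ``the second inequality follows easily by compactness arguments.''
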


\begin{proof}[Proof of Lemma \ref{lem2}] 
The second inequality follows easily by compactness arguments. Let us prove the first inequality. As before, it is sufficient to show the result provided $\rho_x(v,\I(x))\leq \delta$ for some $\delta>0$. Indeed $\rho_x(v,\I(x))=0$ is equivalent to $\rho_y(w,\I(y))=0$, so all terms vanish.  Let $(x,v) \in TM$ be fixed, set $e_v=\frac{v}{|v|_x}$ and
\[
y=\exp_x(v), \quad w = - \psi_x(v),\quad e_w=\frac{w}{|w|_x}, \quad \bar{w} := t_{cut} \left(y,e_w \right) e_w,
\]
and in addition
\[
\bar{v} := t_{cut} \left(x,e_v \right) e_v, \quad z := \exp_x(\bar{v}), \quad w' :=  -\psi_x(\bar{v}).
\]
Note that since $\bar{v}$ belongs to $\TCL(x)$ the velocity $w'$ belongs to $\TCL(z)$, so it satisfies
\[
w' = t_{cut} \left( z,e_{w'}\right) e_{w'}.
\]
Moreover, 
\[
\rho_x \bigl(v,\I(x)\bigr) = \left| v-\bar{v}\right|_x \quad \mbox{ and }\quad  \rho_y\bigl( w,\I(y)\bigr) =  \left| w-\bar{w}\right|_y.
\]
Equip $TM$ with any distance $d_{TM}$ which in charts is locally bi-Lipschitz equivalent to the Euclidean distance on $\R^n \times \R^n$.  
We may assume that $|v|_x$ is bounded. 
Since the geodesic flow is Lipschitz on compact subsets of $TM$, there holds
\[
d_{TM} \left( (y,w), (z,w')\right) \leq    K' \bigl|v -\bar{v} \bigr|_x,
\]
for some uniform constant $K'$. In fact, if $v$ is close to $\I(x)$ then $\bar{v}$ is close to $v$, and so also $y$ and $z$ are close to each other,
so the above inequality follows from our assumption on $d_{TM}$.
Then, assuming that $\rho_x(v,\I(x)) \leq \delta$ for $\delta >0$ small enough
and taking a local chart in a neighborhood of $y$ if necessary, we may assume that $y, z, w, \bar{w}, w'$ are in $\R^n$.
Moreover, up to a bi-Lipschitz transformation which may affect the estimates only up to a uniform multiplicative constant,
we may assume for simplicity that $d_{TM}$ coincides with the Euclidean distance on $\R^n \times \R^n$.
Since  $y$ is perturbed along the geodesic flow,  Theorem \ref{cutlip} gives  
\begin{align*}
\left| w-\bar{w}\right|_y  &= \left| w \right|_y - t_{cut}(y,e_w) =\left| v \right|_x - \left| \bar v \right|_x + \left| \bar v \right|_x - t_{cut}(y,e_w)  \\
&= \left| v \right|_x-\left| \bar v \right|_x + \left| w' \right|_z - t_{cut}(y,e_w) \\
&=\left| v -\bar v \right|_x + t_{cut}(z,e_{w'})- t_{cut}(y,e_w)\\
&\leq \left| v -\bar v \right|_x + K K' \left| v -\bar v \right|_x.
\end{align*}

\end{proof}

We are now ready to start the proof of Theorem \ref{ConNF}.

\section{Proof of Theorem \ref{ConNF}} \label{secCONV}

Let $(M,g)$ be a smooth compact Riemannian manifold of dimension $n\geq 2$ which is nonfocal and satisfies {\bf (MTW)},
and let $K>0$ be a constant such that all properties of Lemmas \ref{lem1}-\ref{lem2} are satisfied. For every $\mu >0$, we set 
$$
\I^{\mu}(x): = \Bigl\{v \in T_x M \, \vert \, \rho_x(v,\I(x)) \le \mu    \Bigr\}.
$$
Since $M$ is assumed to be nonfocal, there is $\bar{\mu}>0$ small enough such that $\I^{\bar{\mu}}(x)$ does not intersect $\TFL(x)$ for any $x\in M$.

\begin{Lem}\label{Lipcontrol}
Taking $K>0$ larger if necessary, we may assume that for every $x\in M$ and any $v_0,v_1 \in \I(x)$ there holds  
$$
v_t:=(1-t)v_0 +tv_1  \in \I^{K |v_1-v_0|_x}(x)
$$
and 
$$
  \bar{q}_t := - d_{v_t} \exp_x(v_t)   \in \I^{K |v_1-v_0|_x} \bigl( y_t\bigr),
 $$
 with $y_t:=\exp_x (v_t)$.
\end{Lem}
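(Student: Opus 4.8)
The plan is to deduce the first inclusion from Lemma \ref{lem1} and the second from Lemma \ref{lem2}, so that the whole statement reduces to a single estimate: a linear control of the defect $\mathcal{D}_t := |v_t|_x^2 - d(x,\exp_x(v_t))^2$ by $|v_1-v_0|_x$, uniformly in $x,v_0,v_1$ and $t\in[0,1]$. To prepare for it I would first record two soft facts. Since $v_0,v_1\in\I(x)$ we have $|v_i|_x = d(x,\exp_x(v_i))\le\diam(M)$ for $i=0,1$, hence $|v_t|_x\le(1-t)|v_0|_x+t|v_1|_x\le\diam(M)$; and the curve $s\mapsto\exp_x\bigl((1-s)v_0+sv_t\bigr)$ has velocity $d_{(1-s)v_0+sv_t}\exp_x(v_t-v_0)$, so its length — and therefore $d(\exp_x(v_0),\exp_x(v_t))$ — is at most $L\,t\,|v_1-v_0|_x$, where $L$ is a uniform bound for the operator norm of $d_w\exp_x$ on the compact disk bundle $\{(x,w)\in TM:|w|_x\le\diam(M)\}$, which contains the whole segment $[v_0,v_1]$ because injectivity domains have uniformly bounded diameter.

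Granting this, the estimate on $\mathcal{D}_t$ is a two-line computation. By the triangle inequality and $d(x,\exp_x(v_0))=|v_0|_x$,
$$
d\bigl(x,\exp_x(v_t)\bigr)\ \ge\ |v_0|_x - d\bigl(\exp_x(v_0),\exp_x(v_t)\bigr)\ \ge\ |v_0|_x - L\,t\,|v_1-v_0|_x,
$$
while $|v_t|_x\le |v_0|_x + t\,|v_1-v_0|_x$; subtracting gives $|v_t|_x - d(x,\exp_x(v_t))\le (1+L)\,|v_1-v_0|_x$. Since $0\le d(x,\exp_x(v_t))\le|v_t|_x\le\diam(M)$ (the radial geodesic is length-$|v_t|_x$), factoring the difference of squares yields $\mathcal{D}_t\le 2\,\diam(M)\,(1+L)\,|v_1-v_0|_x$. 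Then Lemma \ref{lem1}, applied to $v_t$ and using that $M$ is nonfocal, gives $\rho_x(v_t,\I(x))\le K\,\mathcal{D}_t\le K'\,|v_1-v_0|_x$ for a uniform $K'$; after enlarging $K$ this is exactly $v_t\in\I^{K|v_1-v_0|_x}(x)$.

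For the second inclusion I would invoke Lemma \ref{lem2} with $v=v_t$: there $y=\exp_x(v_t)=y_t$ and $w=-d_{v_t}\exp_x(v_t)=\bar{q}_t$, so the upper bound $\rho_{y_t}(\bar{q}_t,\I(y_t))\le K\,\rho_x(v_t,\I(x))$ combined with the first part gives $\rho_{y_t}(\bar{q}_t,\I(y_t))\le K\,K'\,|v_1-v_0|_x$, i.e. $\bar{q}_t\in\I^{K|v_1-v_0|_x}(y_t)$ once $K$ is taken large enough. I do not anticipate a genuine obstacle: this is essentially a bookkeeping statement resting on Lemmas \ref{lem1} and \ref{lem2}. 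The only points that need care are the uniformity over $M$ of the constants ($L$, $\diam(M)$, and the $K$'s of the two lemmas), which follows from compactness of $M$ and of the relevant disk bundle in $TM$, and the (easy) remark that, although the segment $[v_0,v_1]$ need not lie inside $\I(x)$, it does lie in a fixed compact subset of $T_xM$, which is what makes $\exp_x$ uniformly Lipschitz along it.
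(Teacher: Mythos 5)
Your proof is correct, but it follows a route that is genuinely different from the one in the paper. The paper obtains the first inclusion in one line by invoking the uniform Lipschitz continuity of $v\mapsto t_{cut}(x,v)$ on $U_xM$ (the regularity result of Theorem~\ref{cutlip}, applicable here because nonfocality gives $\delta(TM)>0$), which immediately yields $\rho_x(v_t,\I(x))\leq K|v_1-v_0|_x$, and then passes to the second inclusion via Lemma~\ref{lem2} exactly as you do. You instead establish a direct geodesic estimate showing that the defect $|v_t|_x^2-d(x,\exp_x(v_t))^2$ is bounded linearly by $|v_1-v_0|_x$ (using that the whole segment $[v_0,v_1]$ lies in a compact disk bundle, on which $d\exp$ has uniformly bounded operator norm), and then feed this into Lemma~\ref{lem1}. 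Both arguments ultimately rest on nonfocality --- the paper through the Lipschitz regularity of the tangent cut locus, you through the hypothesis $\delta(V)>0$ needed in Lemma~\ref{lem1}. Your version has the merit of being elementary and self-contained at this step, not requiring the cut-locus regularity theorem of the appendix; the paper's version is terser and reuses machinery it needs elsewhere anyway. One small remark: the length of $s\mapsto\exp_x((1-s)v_0+sv_t)$ only needs $d_w\exp_x$ to be bounded on a compact set, not that $\exp_x$ be a diffeomorphism along the segment, and your appeal to compactness handles this; it is also worth noting explicitly that $|v_t|_x\leq\diam(M)$ by convexity of the norm, which you do, so the disk bundle argument closes cleanly.
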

 
\begin{proof}[Proof of Lemma \ref{Lipcontrol}]
Since the functions $v \in U_xM \mapsto t_{cut}(x,v) $ are uniformly Lipschitz, there is $K>0$ such that
$$
\rho_x \bigl(v_t,\I(x) \bigr) \leq K |v_1-v_0|_x \qquad \forall \,v_0,v_1 \in \I(x), \, \forall\, x \in M.
$$
The definition of $\I^{K |v_1-v_0|_x}(x)$ together with Lemma \ref{lem2} yield both inclusions.
\end{proof}
 
Our proof requires the use of the extended MTW tensor which was initially introduced by the first and third author in \cite{fr09}.
To define this extension, we let $x\in M$, $v\in\NF(x)$, and $(\xi,\eta)\in T_xM\times T_xM$.
Since $y:=\exp_xv$ is not conjugate to $x$, by the Inverse Function Theorem there exist an open neighbourhood $\mathcal{V}$ of $(x,v)$ in $\TM$, and an open neighbourhood $\mathcal{W}$ of $(x,y)$ in $M \times M$, such that
$$
\begin{array}{rcl}
\Psi_{(x,v)} : \mathcal{V} \subset TM & \longrightarrow & \mathcal{W} \subset M \times M \\
(x',v') & \longmapsto & \bigl( x',  \exp_{x'}(v') \bigr)
\end{array}
$$
is a smooth diffeomorphism from $\mathcal{V}$ to $\mathcal{W}$. Then we may define
$\hat{c}_{(x,v)}:\mathcal{W} \rightarrow \R$ by
\begin{eqnarray}\label{extendedCOST}
 \hat{c}_{(x,v)} (x',y') := \frac{1}{2} \bigl| \Psi_{(x,v)}^{-1} (x',y') \bigr|_{x'}^2, \qquad
\forall\, (x',y') \in \mathcal{W}.
\end{eqnarray}
If $v\in\I(x)$ then for $y'$ close to $\exp_xv$ and $x'$ close to $x$ we have
$\hat{c}_{(x,v)}(x',y')= c(x',y'):=d(x',y')^2/2$. For every $x\in M$, $v\in\NF(x)$ and $(\xi,\eta)\in T_xM\times T_xM$, the {\em extended Ma--Trudinger--Wang tensor} at $(x,v)$ is defined by the formula

$$
\ov{\S}_{(x,v)}(\xi,\eta)  := -\frac32 \, \left. \frac{d^2}{ds^2}\right|_{s=0}
\left.\frac{d^2}{dt^2}\right|_{t=0}\, \hat{c}_{(x,v)} \Bigl( \exp_x(t\xi),\, \exp_x (v+s\eta)\Bigr).
$$
The following lemma may be seen as an ``extended'' version of \cite[Lemma 2.3]{lv10}.

\begin{Lem}\label{tenseurine}
There exist constants $C, D>0$ such that, for any $(x,v)\in TM$ with $v\in \I^{\bar{\mu}}(x)$, 
$$
\overline{\S}_{(x,v)}(\xi,\eta)\geq -C \left| \langle \xi, \eta\rangle_x \right| |\xi|_x |\eta|_x - D \rho_x(v,\I(x)) |\xi|_x^2 |\eta|_x^2    \qquad \forall\, \xi, \eta \in T_xM.
$$
\end{Lem}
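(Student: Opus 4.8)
The plan is to deduce the lower bound on the extended tensor $\ov{\S}_{(x,v)}$ from the genuine \MTWb\ inequality, which holds on $\I(x)$, by a continuity/perturbation argument as $v$ ranges over the collar $\I^{\bar\mu}(x)\setminus\I(x)$.  The key point is that, for $v\in\I(x)$, the extended cost $\hat c_{(x,v)}$ coincides with the true cost $c=d(\cdot,\cdot)^2/2$ near $(x,\exp_xv)$, so $\ov{\S}_{(x,v)}=\S_{(x,v)}$ there; hence \MTWb\ gives $\ov{\S}_{(x,v)}(\xi,\eta)\ge 0$ whenever $\<\xi,\eta\>_x=0$.  Since $\TCL(x)\subset\NF(x)$ (nonfocality), the map $(x,v)\mapsto \ov{\S}_{(x,v)}$ is smooth on a neighbourhood of the compact set $\{(x,v):v\in\I^{\bar\mu}(x)\}$ in $TM$ — this is exactly why $\bar\mu$ was chosen so that $\I^{\bar\mu}(x)$ avoids $\TFL(x)$ — so all its derivatives are uniformly bounded there.

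First I would fix $(x,v)$ with $v\in\I^{\bar\mu}(x)$ and let $\bar v:=t_{cut}(x,v/|v|_x)\,v/|v|_x\in\TCL(x)$ be the radial projection of $v$ onto $\TCL(x)$ (if $v\in\I(x)$ there is nothing to prove beyond \MTWb), so that $|v-\bar v|_x=\rho_x(v,\I(x))\le\bar\mu$.  Writing $F(x,v;\xi,\eta):=\ov{\S}_{(x,v)}(\xi,\eta)$, which is bilinear and homogeneous of degree $2$ in each of $\xi,\eta$, it suffices to bound $F$ for $|\xi|_x=|\eta|_x=1$.  A first-order Taylor expansion of $F$ in the $v$-variable along the radial segment from $\bar v$ to $v$ gives
$$
F(x,v;\xi,\eta)=F(x,\bar v;\xi,\eta)+O\bigl(|v-\bar v|_x\bigr),
$$
where the $O$ is uniform by the compactness/smoothness just discussed, i.e. bounded by $D\,\rho_x(v,\I(x))$ for a uniform $D$.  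It remains to handle $F(x,\bar v;\xi,\eta)=\S_{(x,\bar v)}(\xi,\eta)$ for $\bar v\in\TCL(x)$.

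The genuine \MTWb\ inequality only gives $\S_{(x,\bar v)}(\xi,\eta)\ge 0$ in the orthogonal case $\<\xi,\eta\>_x=0$, so I would decompose a general unit $\eta$ as $\eta=\<\xi,\eta\>_x\xi+\eta^\perp$ with $\eta^\perp\perp\xi$; expanding the bilinear-in-each-argument tensor and using $\S_{(x,\bar v)}(\xi,\eta^\perp)\ge 0$ together with uniform bounds $|\S_{(x,\bar v)}(\xi,\xi)|\le C'$ and $|\S_{(x,\bar v)}(\xi,\eta')|\le C'$ (valid because $\S$ is continuous on the compact $\{(x,\bar v):\bar v\in\TCL(x)\}\subset\NF(x)$, by nonfocality again) yields
$$
\S_{(x,\bar v)}(\xi,\eta)\ge -C\,\bigl|\<\xi,\eta\>_x\bigr|
$$
for a uniform $C$, which in homogeneous form is the first term of the claimed estimate.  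Combining the two displays and restoring homogeneity in $\xi,\eta$ gives the lemma.  The main obstacle is the bookkeeping in this decomposition step — one must be careful that the ``orthogonal part'' argument producing the $|\<\xi,\eta\>_x|$ term is done cleanly (there is a standard trick: since $\S_{(x,\bar v)}(\xi,\cdot)$ is a quadratic form that is $\ge0$ on the hyperplane $\xi^\perp$, its restriction to the line spanned by $\xi$ contributes at worst a term controlled by the ``mixed'' directions, and completing the square in the variable $\<\xi,\eta\>_x$ absorbs everything into $-C|\<\xi,\eta\>_x||\xi|_x|\eta|_x$), and that all constants are genuinely independent of $(x,v)$, which is where nonfocality and the choice of $\bar\mu$ are essential.
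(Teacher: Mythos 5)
Your proposal is correct and follows essentially the same route as the paper: both rely on (a) the fact that $\ov{\S}$ is smooth on a neighbourhood of the compact set $\I^{\bar\mu}(M)$ because $\bar\mu$ keeps it away from $\TFL$, which via a Taylor/compactness argument produces the $-D\rho_x(v,\I(x))$ term, and (b) the decomposition $\eta=\<\xi,\eta\>_x\xi+\eta^\perp$ with \MTWb\ applied to the orthogonal part plus uniform bounds on the mixed terms, which produces the $-C|\<\xi,\eta\>_x|$ term. The only cosmetic differences are the order of operations (you Taylor-expand in $v$ first and decompose at $\bar v$, whereas the paper first bounds the orthogonal case at $v$ by compactness and then cites Loeper--Villani's Lemma 2.3 for the decomposition you wrote out explicitly) and two small slips: the tensor is biquadratic, not ``bilinear'', in $(\xi,\eta)$, and at $\bar v\in\TCL(x)$ you should write $\ov{\S}_{(x,\bar v)}$ rather than $\S_{(x,\bar v)}$ since the original tensor is not defined across the cut locus.
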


We also give a local version of this theorem when $M$ is not nonfocal.
\begin{Lem}\label{tenseurinebis}
Let $V \subset TM$ and $\mu>0$ such that  
$$
\rho \bigl(V \cap \I,\TFL\bigr) := \sup \Bigl\{ \rho_x(v,w) \, \vert \,  x \in M, \, v \in V(x) \cap \I(x), w \in \TFL(x)\Bigr\} > \mu.
$$
Then there exist constants $C, D>0$ such that, for any $(x,v)\in TM$ with $v\in V(x)\cap \I^{\mu}(x)$, 
$$
\overline{\S}_{(x,v)}(\xi,\eta)\geq -C \left| \langle \xi, \eta\rangle_x \right| |\xi|_x |\eta|_x - D \rho_x(v,\I(x)) |\xi|_x^2 |\eta|_x^2    \qquad \forall\, \xi, \eta \in T_xM.
$$
\end{Lem}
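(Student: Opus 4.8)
The plan is to carry over the proof of Lemma~\ref{tenseurine} almost verbatim, replacing the global nonfocality of $M$ (which there forces $\I^{\bar\mu}(x)\cap\TFL(x)=\emptyset$ for every $x$) by the separation from $\TFL$ that the hypothesis on $V$ provides. Set
\[
\mathcal K:=\bigl\{(x,v)\in TM:\ x\in M,\ v\in V(x)\cap\I^{\mu}(x)\bigr\},
\]
and, for $(x,v)\in\mathcal K$ with $v\notin\I(x)$, let $\bar v:=t_{cut}\!\bigl(x,v/|v|_x\bigr)\,v/|v|_x\in\TCL(x)$, so that $v$ and $\bar v$ lie on a common ray and $|v-\bar v|_x=\rho_x\bigl(v,\I(x)\bigr)\le\mu$. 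The first step -- the heart of the matter -- is to show that $\mathcal K$ has compact closure inside the open set $\{(x,v)\in TM:v\in\NF(x)\}$, and that for $(x,v)\in\mathcal K$ with $v\notin\I(x)$ the whole radial segment $[\bar v,v]$ lies in that closure. Indeed, a velocity $v=se$ ($e=v/|v|_x$) lying in $V(x)\cap\I^{\mu}(x)$ but not in $\I(x)$ has $s>t_{cut}(x,e)\ge\inj(M)>0$ and $s\le t_{cut}(x,e)+\mu$; using (after passing, as in Lemma~\ref{lem1}, to star-shaped fibres) that $t_{cut}(x,e)$ is the limit of times $t\uparrow t_{cut}(x,e)$ with $te\in V(x)\cap\I(x)$, together with the elementary bound $\rho_x\bigl(te,\TFL(x)\bigr)\le t_{foc}(x,e)-t$ (here $t_{foc}(x,e):=\sup\{t\ge0:te\in\NF(x)\}$) and the hypothesis $\rho(V\cap\I,\TFL)>\mu$, one gets $t_{foc}(x,e)\ge t_{cut}(x,e)+\mu'$ for some fixed $\mu'>\mu$. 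Since the points of $[\bar v,v]$ have norm in $[\,t_{cut}(x,e),\,t_{cut}(x,e)+\mu\,]$, each of them lies in $\NF(x)$ at radial distance $\ge\min(\mu'-\mu,\inj(M))>0$ from $\TFL(x)$ (the ``same ray'' competitor in $\TFL(x)$ contributes $\ge\mu'-\mu$, any other one $\ge\inj(M)$). Compactness of $M$ and the bound $|v|_x\le\diam(M)+\mu$ on $\I^{\mu}(x)$ then give the claim. Consequently, on a neighbourhood of $\overline{\mathcal K}$ the diffeomorphisms $\Psi_{(x,v)}$, the extended costs $\hat c_{(x,v)}$ and the extended tensor $\overline{\S}_{(x,v)}$ are well defined and smooth, with uniformly bounded derivatives; in particular there is $D>0$ with
\[
\bigl|\overline{\S}_{(x,v)}(\xi,\eta)-\overline{\S}_{(x,v')}(\xi,\eta)\bigr|\le D\,|v-v'|_x\,|\xi|_x^2|\eta|_x^2
\]
whenever the segment $[v',v]$ lies in $\overline{\mathcal K}$ and $\xi,\eta\in T_xM$.

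Next I would treat the base case $v\in\TCL(x)\cap\NF(x)$. Since $(M,g)$ satisfies \MTWb, \cite[Lemma~2.3]{lv10} furnishes $C>0$, independent of $x$, such that $\S_{(x,v')}(\xi,\eta)\ge-C\,|\langle\xi,\eta\rangle_x|\,|\xi|_x\,|\eta|_x$ for all $x\in M$, $v'\in\I(x)$, $\xi,\eta\in T_xM$. Fixing $v\in\TCL(x)\cap\NF(x)$, applying this with $v'=(1-\epsilon)v\in\I(x)$ for $\epsilon\in(0,1)$, and letting $\epsilon\downarrow0$ -- using that $\overline{\S}_{(x,\cdot)}$ agrees with $\S_{(x,\cdot)}$ on $\I(x)$ and extends continuously to $\NF(x)$ -- we obtain
\[
\overline{\S}_{(x,v)}(\xi,\eta)\ \ge\ -C\,\bigl|\langle\xi,\eta\rangle_x\bigr|\,|\xi|_x\,|\eta|_x\qquad\forall\,\xi,\eta\in T_xM.
\]
To conclude, let $(x,v)\in TM$ with $v\in V(x)\cap\I^{\mu}(x)$. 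If $v\in\I(x)$ then $\rho_x\bigl(v,\I(x)\bigr)=0$ and the asserted inequality is exactly \cite[Lemma~2.3]{lv10} (via $\overline{\S}=\S$ on $\I(x)$). If $v\notin\I(x)$, take $\bar v$ as above; then $\bar v\in\TCL(x)\cap\NF(x)$ and $[\bar v,v]\subset\overline{\mathcal K}$ by the first step, so combining the Lipschitz estimate with the base case,
\[
\overline{\S}_{(x,v)}(\xi,\eta)\ \ge\ \overline{\S}_{(x,\bar v)}(\xi,\eta)-D\,|v-\bar v|_x\,|\xi|_x^2|\eta|_x^2\ \ge\ -C\bigl|\langle\xi,\eta\rangle_x\bigr||\xi|_x|\eta|_x-D\,\rho_x\bigl(v,\I(x)\bigr)\,|\xi|_x^2|\eta|_x^2 .
\]

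The step I expect to be the main obstacle is the first one: making rigorous that the hypothesis on $V$, which a priori constrains only the velocities of $V$ already lying in $\I$, genuinely keeps the whole collar $V(x)\cap\I^{\mu}(x)$ -- and the radial segments joining its points to $\TCL(x)$ -- at a uniformly positive radial distance from $\TFL(x)$ over all $x\in M$. This requires the monotonicity and comparison properties of the radial distance $\rho_x(\cdot,\TFL(x))$ along rays and across base points (in the spirit of Lemma~\ref{lem2} and \cite[Appendix~C]{lv10}), the reduction to star-shaped fibres, and some care about the interplay between the strict inequality $\rho(V\cap\I,\TFL)>\mu$ and the non-strict condition $\rho_x(v,\I(x))\le\mu$ defining $\I^{\mu}(x)$. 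Once this is in hand, the smoothness of the extended cost off the focal locus makes the remaining steps routine, exactly as in Lemma~\ref{tenseurine}.
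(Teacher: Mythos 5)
The paper itself gives no detailed proof of this lemma; it simply remarks that ``the proof of Lemma~\ref{tenseurinebis} follows by the same arguments'' as Lemma~\ref{tenseurine}. Your proposal is a correct fleshing-out of those same arguments with the same ingredients — a compactness argument placing the relevant collar inside $\{(x,v):v\in\NF(x)\}$, uniform Lipschitz continuity of $\ov{\S}$ there, and the Loeper--Villani Lemma~2.3 reduction from orthogonal to general $\xi,\eta$ — merely in a slightly reordered packaging (you first produce the full $-C|\langle\xi,\eta\rangle_x||\xi|_x|\eta|_x$ bound at $\TCL\cap\NF$ by a limit and then propagate by Lipschitz continuity, whereas the paper does the compactness/Lipschitz step for orthogonal vectors first and then concludes ``as in Lemma~2.3''). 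You also correctly identify the one genuinely new point, namely that the separation hypothesis on $V$ (which, as you implicitly read, must be an infimum — the $\sup$ printed in the statement is evidently a typo) keeps the whole collar $V\cap\I^{\mu}$ and the radial segments $[\bar v,v]$ at uniformly positive radial distance from $\TFL$. Two small caveats are worth recording. First, you state that Loeper--Villani furnishes a uniform $C$ such that $\S_{(x,v')}(\xi,\eta)\geq -C|\langle\xi,\eta\rangle_x||\xi|_x|\eta|_x$ ``for all $x\in M,\ v'\in\I(x)$''; without nonfocality this is not available over all of $\I$ (derivatives of the cost degenerate near $\TFL$), only on the compact subset of $\NF$ produced in your first step — which is in fact the only place you use it, so the argument is fine but the assertion as written is over-strong. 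Second, your first step quietly assumes that the fibres of $V$ are star-shaped (so that $\bar v$ is approached along the ray by points of $V\cap\I$); the lemma's statement omits this, but it is present in every application (Theorem~\ref{ConNFbis} takes $Z$ with radial fibres), and the paper's authors clearly intend it. With these two provisos noted, the proposal is sound and matches the authors' intent.
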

 
\begin{proof}[Proof of Lemma \ref{tenseurine}]
The tensors $\S$ and $\ov{\S}$ coincide on the sets of $(x,v)\in TM$ such that $v\in \I(x)$,
hence
\begin{multline*}
\forall\, (x,v)\in TM \mbox{ with } v \in \I(x), \ \forall\, (\xi,\eta) \in T_xM\times T_xM,\\
\Bigl[\<\xi,\eta\>_x = 0 \,\Longrightarrow\quad \ov{\S}_{(x,v)}(\xi,\eta)\geq 0\Bigr].
\end{multline*}
Let $\I^{\bar{\mu}}(M)$ be the compact subset of $TM$ defined by 
$$
\I^{\bar{\mu}}(M) := \cup_{x\in M} \left( \{x\} \times \I^{\bar{\mu}}(x) \right).
$$
The mapping 
$$
(x,v) \in \I^{\bar{\mu}}(M) \, \longmapsto \, \bigl( x,\exp_x(v)\bigr)
$$
is a smooth local diffeomorphism at any $(x,v) \in \I^{\bar{\mu}}(M)$ and the set of $(x,v,\xi, \eta)$ with  $(x,v) \in \I^{\bar{\mu}}(M)$ and $ \xi, \eta  \in U_xM$ such that $\langle \xi,\eta \rangle_x=0$ is compact. Then there is $D>0$ such that
$$
\ov{\S}_{(x,v)}(\xi,\eta)\geq - D \rho_x(v,\I(x)),
$$
for every  $x,v,\xi, \eta$ with  $(x,v) \in \I^{\bar{\mu}}(M)$ and $ \xi, \eta  \in U_xM$ such that $\langle \xi,\eta \rangle_x=0$. By homogeneity we infer that 
$$
\ov{\S}_{(x,v)}(\xi,\eta)\geq - D \rho_x(v,\I(x)) |\xi|_x^2 |\eta|_x^2,
$$
for every  $x,v,\xi, \eta$ with  $(x,v) \in \I^{\bar{\mu}}(M)$ and $ \xi, \eta  \in T_xM$ such that $\langle \xi,\eta \rangle_x=0$. We conclude as in the proof of \cite[Lemma 2.3]{lv10}.
\end{proof}

The proof of Lemma \ref{tenseurinebis} follows by the same arguments. The following lemma will play a crucial role.

\begin{Lem}\label{LEMineq}
Let $h: [0,1] \rightarrow [0,\infty)$ be a semiconvex function such that $h(0)=h(1)=0$ and let $c \geq 0$ be fixed. Assume that there are $t_1<\ldots < t_N$ in $(0,1)$ such that $h$ is not differentiable at $t_i$ for $i=1,\ldots, N$, is of class $C^2$ on $(0,1) \setminus \{t_1,\ldots, t_N\}$, and satisfies
\begin{eqnarray}\label{LEMineq1}
\ddot{h}(t)\ge -\vert \dot{h}(t) \vert - c \qquad \forall \,t \in [0,1] \setminus \bigl\{t_1,\ldots, t_N \bigr\}.
\end{eqnarray}
Then
\begin{eqnarray}\label{LEMineq2}
h(t)\le c \, t(1-t) \qquad \forall\, t \in [0,1].
\end{eqnarray}
Moreover, if in addition there exists a constant $ \epsilon \ge 0$ such that 
\begin{eqnarray}\label{LEMineq3}
c \le \|h\|_\infty+ \epsilon,
\end{eqnarray}
 then 
\begin{eqnarray}\label{LEMineq4}
 \|h\|_\infty \le \epsilon /3.
\end{eqnarray}
\end{Lem}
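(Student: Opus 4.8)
The plan is to prove the pointwise bound \eqref{LEMineq2} by a maximum‑principle type comparison at the contact point, after which \eqref{LEMineq4} follows by an elementary manipulation. First I would treat the case $c>0$. Set $\phi(t):=c\,t(1-t)-h(t)$, which is continuous on $[0,1]$ with $\phi(0)=\phi(1)=0$, and aim to show $\phi\ge0$. If this failed, then, since $\phi(0)=\phi(1)=0$, the minimum of $\phi$ over $[0,1]$ would be negative and attained at some interior point $t_0\in(0,1)$. Now $-\phi=h-c\,t(1-t)$ is the sum of the semiconvex function $h$ and the smooth convex function $c\,t^{2}-c\,t$, hence is semiconvex, so $\phi$ is semiconcave; in particular its one‑sided derivatives exist on $(0,1)$ and satisfy $\dot\phi^{-}\ge\dot\phi^{+}$ everywhere. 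At the interior minimum $t_0$ one also has $\dot\phi^{-}(t_0)\le0\le\dot\phi^{+}(t_0)$, and the two inequalities together force $\dot\phi^{-}(t_0)=\dot\phi^{+}(t_0)=0$. Hence $\phi$, and therefore $h=c\,t(1-t)-\phi$, is differentiable at $t_0$, so $t_0\notin\{t_1,\dots,t_N\}$; consequently $h$, and then $\phi$, is of class $C^{2}$ in a neighbourhood of $t_0$.

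Next I would exploit the second‑order minimality of $\phi$ at the interior $C^{2}$ point $t_0$, which gives $\ddot\phi(t_0)\ge0$, i.e. $\ddot h(t_0)\le-2c$. On the other hand, $\dot\phi(t_0)=0$ yields $\dot h(t_0)=c(1-2t_0)$, so that $|\dot h(t_0)|=c\,|1-2t_0|<c$ because $t_0\in(0,1)$; feeding this into \eqref{LEMineq1} at the point $t_0$ (which is not one of the $t_i$) gives $\ddot h(t_0)\ge-|\dot h(t_0)|-c>-2c$, a contradiction. This proves $\phi\ge0$, that is \eqref{LEMineq2}, when $c>0$. The remaining case $c=0$ is then obtained by a limiting argument: a function satisfying \eqref{LEMineq1} with $c=0$ a fortiori satisfies it with every $c'>0$, hence $h(t)\le c'\,t(1-t)$ for all such $c'$, forcing $h\equiv0$.

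Finally, under the extra hypothesis $c\le\|h\|_\infty+\epsilon$, the bound \eqref{LEMineq2} yields $\|h\|_\infty\le c\,\max_{[0,1]}t(1-t)=c/4\le(\|h\|_\infty+\epsilon)/4$, i.e. $3\|h\|_\infty\le\epsilon$, which is \eqref{LEMineq4}. The heart of the argument is a comparison of $h$ with the barrier $t\mapsto c\,t(1-t)$: this barrier is a super‑solution of \eqref{LEMineq1} which is \emph{strict} at every interior point, since there $-2c<-c\,|1-2t|-c$ because $|1-2t|<1$, so $h$ cannot beat the barrier at an interior maximum of their difference. I expect the one genuinely delicate step to be excluding that the relevant extremum of $\phi$ lies at one of the non‑smooth points $t_1,\dots,t_N$; this is precisely where semiconcavity is used, through the forced vanishing of the one‑sided derivatives of $\phi$ at an interior minimum, which promotes $t_0$ to a point where the differential inequality \eqref{LEMineq1} is available.
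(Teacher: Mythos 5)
Your proof is correct and follows essentially the same maximum-principle comparison as the paper: the paper introduces $f(t)=h(t)-a\,t(1-t)$ with $a>c$, uses semiconvexity to force differentiability at an interior maximum, derives the contradiction $0\geq a-c$, and then lets $a\downarrow c$; you instead work directly with $a=c$ and close the contradiction via the strict interior inequality $|1-2t_0|<1$ (handling $c=0$ by an additional limit). The two variants are interchangeable, and your final derivation of \eqref{LEMineq4} matches the paper's.
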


\begin{proof}[Proof of Lemma \ref{LEMineq}] 
Let $a>0$ and $f : [0,1] \rightarrow \R$ be the semiconvex function defined by 
$$
f(t)=h(t)-a t(1-t) \qquad \forall \, t \in [0,1].
$$
Let $\bar t $ be a maximum point for $f$. Since $f$ is semiconvex,
it  has to be differentiable at $\bar{t}$, so $\bar{t} \neq t_i$ for $i=1, \ldots, N$. If $\bar t \in(0,1)$,
then there holds $\dot{f} (\bar{t})=0$ and $\ddot{f}(\bar{t}) \le 0$. Thus, using (\ref{LEMineq1}) we get  
$$
|\dot h(\bar{t}) |= a |2 \bar{t} -1 | \leq a,
$$
$$
\quad 0 \geq \ddot{f}(\bar{t})  =  \ddot{h}(\bar{t})+2a  \ge - |\dot h (\bar{t}) | - c + 2a \ge a - c.
$$
This yields a contradiction as soon as $a>c$,
which implies that in that case $f$ attains its maximum on the boundary of $[0,1]$. Since $f(0)=f(1)=0$, we infer that 
$$
h(t)\le a t(1-t) \qquad \forall \, t \in [0,1],
$$
for every $a > c$. Letting $a \downarrow c $, we get (\ref{LEMineq2}). Finally, if
(\ref{LEMineq3}) is satisfied, (\ref{LEMineq2}) implies (recall that $h$ is nonnegative)
$$
\|h\|_\infty  = \sup_{t \in [0,1]} |h(t)|   \le (\|h\|_\infty+\epsilon) \sup_{t \in [0,1]} t(1-t) = (\|h\|_\infty+\epsilon)/4
$$
and  inequality (\ref{LEMineq4}) follows easily. 
\end{proof}

We recall that given $v_0, v_1 \in \I(x)$, for every $t\in [0,1]$ we set
$$
v_t := (1-t)v_0 +tv_1, \qquad y_t:=\exp_x (v_t), \qquad  \bar{q}_t := - d_{v_t} \exp_x(v_t).
$$
In addition, whenever $y_t$ does not belong to $\cut(x)$ (or equivalently $x \notin \cut(y_t)$) we denote by $q_t$ the velocity in $\I(y_t)$ such that
$$
\exp_{y_t} (q_t) = x \quad \mbox{ and } \quad |q_t|_{y_t} = d(x,y_t).
$$
The following results follow respectively from \cite[Lemma B.2]{FRV_LAST} and \cite[Proposition 6.1]{FRV} and do not need the nonfocality assumption. The idea of Lemma \ref{DERh} goes back to Kim and McCann \cite{KimMcC10}.  Lemma \ref{generic} is an improvement of \cite{FigVil08}.

\begin{Lem}\label{generic}
Let $x\in M$ and $v_0,v_1 \in \I(x)$ be fixed. Then,
up to slightly perturbing $v_0$ and $v_1$, we can assume that $v_0, v_1 \in \I(x)$ and that the semiconvex function $h:[0,1] \rightarrow \R$ defined as 
$$
h(t) := \frac{ |v_t|_x^2}{2} - \frac{d( x, y_t)^2}{2} \qquad \forall \, t \in [0,1],
$$
is of class $C^2$ outside a finite set of times $0<t_1< \ldots < t_N<1$ and not differentiable at $t_i$ for $i=1, \ldots, N$.
\end{Lem}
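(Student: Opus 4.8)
The plan is to transfer the regularity question from $h$ to the squared‑distance term. Write $h(t)=\tfrac12 g_x(v_t,v_t)-\tfrac12\,d\bigl(x,\exp_x(v_t)\bigr)^2$. Since $v_t=(1-t)v_0+tv_1$, the first term equals $\tfrac12\bigl((1-t)^2 g_x(v_0,v_0)+2t(1-t)g_x(v_0,v_1)+t^2 g_x(v_1,v_1)\bigr)$, a polynomial in $t$ of degree at most $2$, hence $C^\infty$; and since $y\mapsto d(x,y)^2$ is locally semiconcave on $M$ and $t\mapsto(x,y_t)$ is smooth, the second term is semiconcave, so $h$ is semiconvex, and it is in fact $C^\infty$ at every $t$ for which $(x,y_t)\notin\cut(M)$, i.e.\ $y_t\notin\cut(x)$. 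As $\I(x)$ is open and $v_0,v_1\in\I(x)$, we have $v_t\in\I(x)$ (hence $y_t\notin\cut(x)$) for $t$ near $0$ and near $1$, so all non‑smoothness of $h$ is confined to a compact subinterval of $(0,1)$. It therefore suffices to show that, after an arbitrarily small perturbation of $(v_0,v_1)$ inside the open set $\I(x)\times\I(x)$, the set $B:=\{t\in(0,1):y_t\in\cut(x)\}$ is finite; one then relabels as $t_1<\dots<t_N$ those elements of $B$ at which $h$ fails to be differentiable (at the remaining ones, if any, $h$ is already $C^2$, consistently with the statement).

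The finiteness of $B$ is obtained by a transversality argument. Recall the standard stratification $\cut(x)=\cut_{\mathrm{reg}}(x)\sqcup\cut_{\mathrm{sing}}(x)$, where $\cut_{\mathrm{reg}}(x)$ is the set of cut points joined to $x$ by exactly two minimizing geodesics, both non‑conjugate --- an embedded $C^\infty$ hypersurface, open and dense in $\cut(x)$ --- and $\cut_{\mathrm{sing}}(x)$ (conjugate cut points and cut points with at least three minimizing geodesics) is a compact set of Hausdorff dimension at most $n-2$; cf.\ \cite{FigVil08}. Consider the smooth evaluation map $E\colon\I(x)\times\I(x)\times(0,1)\to M$, $E(v_0,v_1,t):=\exp_x(v_t)$, whose differential has image $\mathrm{Im}\,d_{v_t}\exp_x$; thus $E$ is a submersion precisely at those $(v_0,v_1,t)$ for which $d_{v_t}\exp_x$ is nonsingular, and in particular at every $(v_0,v_1,t)$ with $y_t\in\cut_{\mathrm{reg}}(x)$ and $v_t$ equal to one of the two (non‑conjugate) minimizing velocities. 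Applying the parametric transversality theorem on the open set where $E$ is a submersion, one perturbs $(v_0,v_1)$ so that the curve $c=c_{v_0,v_1}\colon t\mapsto y_t$ is transverse to the hypersurface $\cut_{\mathrm{reg}}(x)$ and disjoint from $\cut_{\mathrm{sing}}(x)$ (a one‑dimensional curve generically avoids a set of dimension $\le n-2$). Since $\cut_{\mathrm{reg}}(x)$ is open in the compact set $\cut(x)$, any accumulation point $t_\ast$ of $c^{-1}(\cut(x))$ would satisfy $y_{t_\ast}\in\cut_{\mathrm{reg}}(x)$, contradicting transversality at $t_\ast$; hence $c^{-1}(\cut(x))=c^{-1}(\cut_{\mathrm{reg}}(x))$ is finite.

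It remains to read off the local structure of $h$. Off the finite set $c^{-1}(\cut(x))=:\{t_1,\dots,t_N\}$ we have $y_t\notin\cut(x)$, so $h$ is $C^\infty$ there. At a point $t_i$, near $y_{t_i}\in\cut_{\mathrm{reg}}(x)$ one has $d(x,\cdot)^2=\min(f_1,f_2)$ with $f_1,f_2$ the squared lengths of the two (non‑conjugate) geodesic branches, both $C^\infty$ near $y_{t_i}$ and with $\nabla f_1(y_{t_i})\neq\nabla f_2(y_{t_i})$ (distinct minimizing geodesics have distinct terminal velocities); since $c$ is transverse to $\{f_1=f_2\}$, the function $t\mapsto f_1(c(t))-f_2(c(t))$ vanishes at $t_i$ with nonzero derivative, so $t\mapsto d(x,c(t))^2$ --- and therefore $h$ --- is $C^\infty$ on each side of $t_i$ and has distinct one‑sided derivatives there. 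This yields the asserted form of $h$.

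The main obstacle lies in the transversality step, and specifically in the behaviour of the segment $t\mapsto v_t$ outside $\overline{\I(x)}$: there the curve $c$ may reach cut points through geodesics that are not minimizing, so $v_t$ can be a critical point of $\exp_x$ even when $y_t$ is an ordinary cut point, and then $E$ fails to be a submersion and the parametric transversality theorem does not apply directly. Overcoming this requires first arguing that, for generic $(v_0,v_1)$, the set of $t$ with $v_t$ critical for $\exp_x$ is finite (those finitely many times are then absorbed into $\{t_1,\dots,t_N\}$), together with the structure‑theoretic fact that $\cut_{\mathrm{sing}}(x)$ is genuinely lower‑dimensional, so that a generic curve misses it. Both ingredients are exactly what is packaged in Figalli--Villani's approximation lemma \cite{FigVil08} and its refinement \cite[Lemma B.2]{FRV_LAST}, which one invokes to conclude.
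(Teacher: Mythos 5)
This lemma is not actually proved in the paper: it is stated and immediately referred to \cite[Lemma B.2]{FRV_LAST}, itself a sharpening of Figalli--Villani's approximation lemma \cite{FigVil08}; the only comment in the text is that the lemma ``is an improvement of \cite{FigVil08}''. There is therefore no in-paper argument to compare against.

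Your sketch captures the intended strategy --- smooth-plus-semiconcave split of $h$, stratification of $\cut(x)$, generic transversality of $t\mapsto y_t$ to the regular stratum, corner structure from $d(x,\cdot)^2=\min(f_1,f_2)$ near a regular cut point --- and you correctly flag the real obstruction: the evaluation map $E(v_0,v_1,t)=\exp_x(v_t)$ fails to be a submersion once $v_t$ exits $\overline{\I(x)}$ and may become a critical point of $\exp_x$ even though $y_t$ is a perfectly nice cut point, so the off-the-shelf parametric transversality argument carried out in the target manifold breaks down. This is exactly why \cite{FigVil08} and \cite[Lemma B.2]{FRV_LAST} work at the level of velocities and of the tangent cut locus rather than purely in $M$. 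Since your closing paragraph resolves the gap by invoking those same two references, the proposal is in substance the same citation the paper makes, not a self-contained alternative; as written it is incomplete at precisely the step you identify. Two subsidiary claims should also be reined in: the Hausdorff-dimension bound $\dim_{\mathcal{H}}\cut_{\mathrm{sing}}(x)\le n-2$ attributed to \cite{FigVil08} is stronger than what that reference establishes (its Sard-type estimates live in $T_xM$, not in $M$, and a dimension bound on the singular cut locus in $M$ is not available in this generality), and the accumulation-point remark presupposes that the curve already avoids $\cut_{\mathrm{sing}}(x)$, which is the output of the hard perturbation step, not an input to it.
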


\begin{Lem}\label{DERh}
Let $x\in M$ and $v_0, v_1 \in \I(x)$.
Assume that the function $h$ defined above is $C^2$ outside a finite set of
times $0<t_1< \ldots < t_N<1$, and is not differentiable at $t_i$ for $i=1, \ldots, N$. 
Furthermore, suppose that $[\bar{q}_t,q_t]\subset \NF(y_t)$ for all $t\in [0,1]$.
Then for every $t \in [0,1] \setminus \bigl\{t_1, \ldots, t_N\bigr\}$ we have
\begeq\label{doth}
\dot{h}(t) = \bigl\<q_t-\bar{q}_t, \dot{y}_t\bigr\>_{y_t},
\endeq
\begeq\label{ddoth}
\ddot{h}(t) = \frac23 \int_0^1 (1-s)\,\ov{\S}_{(y_t, (1-s)\bar{q}_t + s q_t)} (\dot{y}_t, q_t-\bar{q}_t)\,ds.
\endeq
\end{Lem}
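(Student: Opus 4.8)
Below is a plan; the first formula is quick, the second requires the extended‑cost machinery.

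\medskip
\noindent The plan is to prove \eqref{doth} by a direct first‑variation computation and \eqref{ddoth} by differentiating \eqref{doth} once more, recognising the outcome --- through the extended cost \eqref{extendedCOST} --- as the integral form of the extended Ma--Trudinger--Wang inequality along the $c$-segment joining $\bar q_t$ to $q_t$. For \eqref{doth}, fix $t\notin\{t_1,\dots,t_N\}$, so that $h$ is $C^2$ near $t$, $q_t$ is defined, and $y\mapsto\tfrac12 d(x,y)^2$ is smooth near $y_t$; write $h=A-B$ with $A(t):=\tfrac12|v_t|_x^2$ and $B(t):=\tfrac12 d(x,y_t)^2$. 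Then $A(t)$ is the energy of the geodesic $r\in[0,1]\mapsto\exp_x(rv_t)$ from $x$ to $y_t$, so, the initial endpoint being held fixed and $\dot y_t=d_{v_t}\exp_x(v_1-v_0)$, the first variation of the energy gives $\dot A(t)=\langle d_{v_t}\exp_x(v_t),\dot y_t\rangle_{y_t}=-\langle\bar q_t,\dot y_t\rangle_{y_t}$. Since the gradient at $y_t$ of $y\mapsto\tfrac12 d(x,y)^2$ equals $-q_t$, we also get $\dot B(t)=-\langle q_t,\dot y_t\rangle_{y_t}$; subtracting yields \eqref{doth}.

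\smallskip
\noindent\emph{Reduction for the second derivative.} Keeping $t$ fixed, the hypothesis $[\bar q_t,q_t]\subset\NF(y_t)$ guarantees that $x=\exp_{y_t}(\bar q_t)$ is not conjugate to $y_t$ --- so $d_{v_t}\exp_x$ is nonsingular, the extended cost $\hat c_{(x,v_t)}$ of \eqref{extendedCOST} is defined near $(x,y_t)$ in $M\times M$, and $\nabla^2_{xy}\hat c_{(x,v_t)}$ is nondegenerate there --- and that $\bar q_t$ and $q_t$ lie in the same sheet of $\exp_{y_t}^{-1}$. As $v_\tau$ stays near $v_t$ for $\tau$ close to $t$, one has $\Psi_{(x,v_t)}^{-1}(x,y_\tau)=v_\tau$, hence $A(\tau)=\hat c_{(x,v_t)}(x,y_\tau)$ and
\[
h(\tau)=\hat c_{(x,v_t)}(x,y_\tau)-c(x,y_\tau)\qquad(\tau\text{ near }t).
\]
Differentiating twice by the chain rule along $\tau\mapsto y_\tau$, and using that the gradients at $y_t$ of $y\mapsto\hat c_{(x,v_t)}(x,y)$ and $y\mapsto c(x,y)$ are $-\bar q_t$ and $-q_t$, gives
\[
\ddot h(t)=\Bigl(\Hess_y\hat c_{(x,v_t)}(x,\cdot)-\Hess_y c(x,\cdot)\Bigr)(\dot y_t,\dot y_t)+\bigl\langle q_t-\bar q_t,\,\tfrac{D}{dt}\dot y_t\bigr\rangle_{y_t},
\]
with all Hessians evaluated at $y_t$ and $\tfrac{D}{dt}\dot y_t$ the covariant acceleration of $\tau\mapsto y_\tau$ (nonzero in general, since this curve is not a geodesic).

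\smallskip
\noindent\emph{The $c$-segment identity.} Put $q^s:=(1-s)\bar q_t+sq_t\in T_{y_t}M$ for $s\in[0,1]$; by hypothesis $q^s\in\NF(y_t)$ and $\exp_{y_t}(q^0)=\exp_{y_t}(q^1)=x$. The extended cost $\hat c_{(y_t,q^\sigma)}$ depends on its base velocity only through the sheet of $\exp_{y_t}^{-1}$ containing it, so $\hat c_{(y_t,q^s)}$ and $\hat c_{(y_t,q^{s'})}$ coincide wherever both are defined; combining this with the identity $q^s+s'(q_t-\bar q_t)=q^{s+s'}$ and the definition of $\ov{\S}$, one finds for every $s\in[0,1]$
\[
\tfrac23\,\ov{\S}_{(y_t,q^s)}(\dot y_t,q_t-\bar q_t)=-\,\partial_\sigma^2\big|_{\sigma=s}\,\partial_\tau^2\big|_{\tau=0}\,Q(\sigma,\tau),\qquad Q(\sigma,\tau):=\hat c_{(y_t,q^\sigma)}\bigl(\exp_{y_t}(\tau\dot y_t),\exp_{y_t}(q^\sigma)\bigr).
\]
Multiplying by $(1-s)$, integrating over $s\in[0,1]$ and integrating by parts in $s$ (Taylor with integral remainder), the right‑hand side collapses to $-\bigl[\Phi(1)-\Phi(0)-\Phi'(0)\bigr]$ with $\Phi(\sigma):=\partial_\tau^2\big|_{\tau=0}Q(\sigma,\tau)$. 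Now $Q(1,\tau)=\hat c_{(y_t,q_t)}(\exp_{y_t}(\tau\dot y_t),x)=c\bigl(x,\exp_{y_t}(\tau\dot y_t)\bigr)$ since $q_t\in\I(y_t)$, while $Q(0,\tau)=\hat c_{(y_t,\bar q_t)}(\exp_{y_t}(\tau\dot y_t),x)=\hat c_{(x,v_t)}\bigl(x,\exp_{y_t}(\tau\dot y_t)\bigr)$ (both equal the energy of the geodesic between $\exp_{y_t}(\tau\dot y_t)$ and $x$ lying in the sheet through $\bar q_t$, read from either endpoint). As $\tau\mapsto\exp_{y_t}(\tau\dot y_t)$ is a geodesic there is no acceleration correction, so $\Phi(1)=\Hess_y c(x,\cdot)(\dot y_t,\dot y_t)$ and $\Phi(0)=\Hess_y\hat c_{(x,v_t)}(x,\cdot)(\dot y_t,\dot y_t)$ at $y_t$, and the Hessian part of $-[\Phi(1)-\Phi(0)-\Phi'(0)]$ matches that of $\ddot h(t)$ above. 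Everything thus reduces to the identity
\[
\Phi'(0)=\partial_\sigma\big|_{\sigma=0}\,\partial_\tau^2\big|_{\tau=0}Q(\sigma,\tau)=\bigl\langle q_t-\bar q_t,\,\tfrac{D}{dt}\dot y_t\bigr\rangle_{y_t}.
\]

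\smallskip
\noindent\emph{Where the main difficulty lies.} This last identity is the crux of the argument. Using $\partial_\sigma\big|_{\sigma=0}Q(\sigma,\tau)=\langle V(\tau),\,d_{\bar q_t}\exp_{y_t}(q_t-\bar q_t)\rangle_x$, where $V(\tau)\in T_xM$ is the terminal velocity of the geodesic from $\exp_{y_t}(\tau\dot y_t)$ to $x$ in the sheet through $\bar q_t$, one has to compute the second $\tau$-derivative of $V$ at $\tau=0$ through the Jacobi equation of the variation $\tau\mapsto(\text{geodesic from }\exp_{y_t}(\tau\dot y_t)\text{ to }x)$ --- whose variation field equals $\dot y_t$ at the moving endpoint and vanishes at $x$ --- and match it against $\tfrac{D}{dt}\dot y_t$, which is itself the second‑order Jacobi field of the variation $\tau\mapsto(\text{geodesic from }x\text{ to }y_\tau)$. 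Carrying out this covariant bookkeeping --- exchanging the two endpoints, using the symmetries of the curvature tensor and the nondegeneracy of $\nabla^2_{xy}\hat c_{(x,v_t)}$ --- is precisely Loeper's covariance computation performed for the extended cost; it is the point at which one invokes the identities of \cite{fr09,FRV}.
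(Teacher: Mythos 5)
The paper does not actually prove Lemma~\ref{DERh}; it simply cites \cite[Lemma~B.2]{FRV_LAST} for \eqref{doth} and \cite[Proposition~6.1]{FRV} for \eqref{ddoth}, remarking that the idea goes back to Kim and McCann \cite{KimMcC10}. Your proposal is therefore more ambitious than the paper's treatment, and that is worth acknowledging. Your derivation of \eqref{doth} is correct and complete: the first variation of energy for $A(t)=\tfrac12|v_t|_x^2$ along the geodesic $r\mapsto\exp_x(rv_t)$ gives $\dot A(t)=-\langle\bar q_t,\dot y_t\rangle_{y_t}$, and the gradient $\nabla_y c(x,\cdot)|_{y_t}=-q_t$ gives $\dot B(t)=-\langle q_t,\dot y_t\rangle_{y_t}$; subtraction yields \eqref{doth}. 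Your set-up for \eqref{ddoth}, writing $h(\tau)=\hat c_{(x,v_t)}(x,y_\tau)-c(x,y_\tau)$, differentiating twice, and then recognising $\Hess_y\hat c_{(x,v_t)}-\Hess_y c$ as the Taylor remainder of $\sigma\mapsto\Hess_{y'}\hat c_{(y_t,q^\sigma)}(\,\cdot\,,\exp_{y_t}q^\sigma)|_{y'=y_t}$ along the segment $q^\sigma=(1-\sigma)\bar q_t+\sigma q_t$ via the base-change symmetry $\hat c_{(y_t,\bar q_t)}(y,x)=\hat c_{(x,v_t)}(x,y)$, is structurally the right reduction and is in fact the mechanism behind \cite[Proposition~6.1]{FRV}.

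However, the proposal has a genuine gap: the identity
\[
\Phi'(0)\;=\;\partial_\sigma\big|_{\sigma=0}\,\partial_\tau^2\big|_{\tau=0}\,Q(\sigma,\tau)\;=\;\Bigl\langle q_t-\bar q_t,\ \tfrac{D}{dt}\dot y_t\Bigr\rangle_{y_t},
\]
which you yourself single out as the crux, is stated but never established. You outline what the computation should involve (Jacobi fields for the two exchanged endpoint-variations, symmetries of the curvature tensor, nondegeneracy of $\nabla^2_{xy}\hat c$), but you then attribute the actual verification to ``Loeper's covariance computation'' and the references \cite{fr09,FRV}. Since everything before this point is a careful reduction whose entire purpose is to isolate this single identity, deferring precisely that identity to a reference means \eqref{ddoth} is not proved. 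A secondary, smaller issue is the statement that ``$\hat c_{(y_t,q^s)}$ and $\hat c_{(y_t,q^{s'})}$ coincide wherever both are defined'' --- this is true for $s'$ near $s$ (the local inverse branches of $\exp_{y_t}$ patch together continuously along the segment inside $\NF(y_t)$), but as phrased it could be misread as a global claim; what you actually use is the local version, which is fine. To turn this into a complete proof you need either to carry out the Jacobi-field matching for $\Phi'(0)$ explicitly, or to cite \cite[Proposition~6.1]{FRV} from the outset as the paper does, in which case the preceding reduction is redundant.
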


The next lemma deals with semiconvexity properties of the sets $I(x)$.
We refer the reader to the Appendix \ref{App} for the main definitions and properties of semiconvex sets.

\begin{Lem}\label{lem5}
There exists a large universal constant $K>0$ such that the following properties are satisfied for any $x\in M$: 
\begin{itemize}
\item[(i)] Assume there are constants $\omega >0$ and $\kappa \in (0,\bar{\mu})$ such that  
\begin{eqnarray*}
\forall\, v_0, v_1 \in I(x), \quad |v_1-v_0|_x \leq \omega \quad \Longrightarrow \quad 
\sup_{q \in  [ \bar q_t ,q_t]} \Bigl\{ \rho_{y_t} \bigl(q,\I(y_t) \bigr) \Bigr\} \leq \kappa.
\end{eqnarray*}
Then $\bar{\I}(x)$ is $(K\kappa)$-radial-semiconvex. 
\item[(ii)] Assume there are  constants $ \omega, \alpha,\epsilon \ge 0$ such that 
\begin{multline*}
\forall\, v_0, v_1 \in I(x), \quad |v_1-v_0|_x \leq \omega \quad \\
 \Longrightarrow \quad 
\sup_{q \in  [ \bar q_t ,q_t]} \Bigl\{ \rho_{y_t} \bigl(q,\I(y_t) \bigr) \Bigr\} \leq
\min \biggl\{  \alpha \left( \frac{ |v_t|_x^2}{2} - \frac{d( x, y_t)^2}{2}  \right) + \epsilon, \bar{\mu} \biggr\}.
\end{multline*}
Then $\bar{\I}(x)$ is $(K\epsilon)$-radial-semiconvex.
 \end{itemize}
\end{Lem}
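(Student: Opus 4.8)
\textbf{Proof plan for Lemma \ref{lem5}.} The plan is to compare the radial distance $\rho_x(v_t,\I(x))$ of a midpoint-type chord with the defect function $h(t) = |v_t|_x^2/2 - d(x,y_t)^2/2$, and then run the differential inequality of Lemma \ref{LEMineq}. First I would fix $x\in M$ and $v_0,v_1\in\I(x)$; by Lemma \ref{generic} we may assume (after an arbitrarily small perturbation, which does not affect semiconvexity conclusions since radial-semiconvexity is a closed condition) that $h$ is $C^2$ outside a finite set $0<t_1<\dots<t_N<1$ and nondifferentiable there, with $h(0)=h(1)=0$. Note $h(t) = \tfrac12(|v_t|_x^2 - d(x,\exp_x v_t)^2)\ge 0$ is exactly (up to the factor $1/2$) the quantity appearing in Lemma \ref{lem1}, and by Lemma \ref{lem1} (nonfocal case, or the compact-set version applied to the segment $[v_0,v_1]$) we have $\rho_x(v_t,\I(x))\le K\cdot 2h(t)$ for a universal $K$. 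So it suffices to bound $\|h\|_\infty$: radial-semiconvexity of $\bar\I(x)$ will then follow from the definition in Appendix \ref{App}, since controlling $\rho_x(v_t,\I(x))$ along all chords is precisely a radial-semiconvexity estimate.

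Next I would establish the second-order inequality for $h$. Under the hypothesis of (i), for $|v_1-v_0|_x\le\omega$ we have $\sup_{q\in[\bar q_t,q_t]}\rho_{y_t}(q,\I(y_t))\le\kappa<\bar\mu$, so the whole segment $[\bar q_t,q_t]$ lies in $\I^{\bar\mu}(y_t)$, which by the choice of $\bar\mu$ is disjoint from $\TFL(y_t)$; hence $[\bar q_t,q_t]\subset\NF(y_t)$ for all $t$, and Lemma \ref{DERh} applies. Combining the formula \eqref{ddoth} with the lower bound of Lemma \ref{tenseurine} applied at the points $(y_t,(1-s)\bar q_t+sq_t)$, together with \eqref{doth} to identify $|\dot h(t)|$ with $|\langle q_t-\bar q_t,\dot y_t\rangle_{y_t}|$, yields
\[
\ddot h(t)\ge -C'|\dot h(t)| - D'\Bigl(\sup_{q\in[\bar q_t,q_t]}\rho_{y_t}(q,\I(y_t))\Bigr)\,|\dot y_t|_{y_t}^2\,|q_t-\bar q_t|_{y_t}^2
\]
off the bad times, where $C',D'$ absorb the $C,D$ of Lemma \ref{tenseurine} and uniform bounds (from compactness and Lemma \ref{Lipcontrol}) on $|\dot y_t|$, $|q_t-\bar q_t|$, and the change of norm. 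For (i) this gives $\ddot h\ge -C'|\dot h|-c$ with $c = D''\kappa$ for a universal $D''$, and Lemma \ref{LEMineq} (first part, after harmlessly rescaling $t$ or absorbing the constant in front of $|\dot h|$ — one can reduce to coefficient $1$ by replacing $h$ with a constant multiple or by a linear time change, or simply re-examine the proof of Lemma \ref{LEMineq} which only used $a>c$) yields $h(t)\le c\,t(1-t)\le c/4$, hence $\|h\|_\infty\le D''\kappa/4$ and $\rho_x(v_t,\I(x))\le 2K D''\kappa/4\le (K\kappa)$ after enlarging $K$. This handles short chords; for chords with $|v_1-v_0|_x>\omega$ one uses instead the crude bound $\rho_x(v_t,\I(x))\le\diam\,(\text{something})$ which is $\le(K\kappa)$ once $K$ is chosen large enough relative to $\omega,\bar\mu$ and the diameter of injectivity domains — this is where ``universal $K$'' does its work and is the reason the statement only asserts the estimate for a possibly huge $K$.

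For part (ii) the scheme is identical but the bound on $\sup_q\rho_{y_t}(q,\I(y_t))$ is now $\le\alpha\cdot 2h(t)+\epsilon$ (within $[0,\bar\mu]$, so Lemma \ref{tenseurine} still applies). Feeding this into the second-order inequality gives, off the bad times,
\[
\ddot h(t)\ge -C'|\dot h(t)| - D'''\bigl(\alpha h(t)+\epsilon\bigr),
\]
and using $h(t)\le\|h\|_\infty$ we get $\ddot h\ge -C'|\dot h| - c$ with $c = D'''(\alpha\|h\|_\infty+\epsilon)$. This is exactly the shape \eqref{LEMineq3} with ``$\epsilon$'' replaced by a universal multiple of $\epsilon$ provided $D'''\alpha$ can be absorbed — here one must be a little careful: Lemma \ref{LEMineq}'s hypothesis \eqref{LEMineq3} requires the coefficient of $\|h\|_\infty$ to be exactly $1$, so one should either assume (as is implicitly the regime of the bootstrap in which this lemma is used) that $D'''\alpha\le 1$, or restate so that $c\le A\|h\|_\infty+\epsilon'$ and re-run the elementary argument of Lemma \ref{LEMineq} to get $\|h\|_\infty\le\epsilon'/(3(1-A/\cdots))$; in any case one concludes $\|h\|_\infty\le (\text{const})\,\epsilon$, hence $\rho_x(v_t,\I(x))\le(K\epsilon)$, i.e. $\bar\I(x)$ is $(K\epsilon)$-radial-semiconvex. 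The main obstacle I anticipate is the bookkeeping in converting ``$\sup_{q\in[\bar q_t,q_t]}\rho_{y_t}(q,\I(y_t))$ along the image curve $y_t$'' into ``$\rho_x(v_t,\I(x))$ at the source'', which requires Lemma \ref{lem2} to pass between base points $x$ and $y_t$ and the identification $w=-\psi_x(v)=\bar q_t$ (so that $\rho_{y_t}(\bar q_t,\I(y_t))\asymp\rho_x(v_t,\I(x))$), together with making the short-chord/long-chord dichotomy genuinely yield a single universal $K$; the analytic core (the differential inequality plus Lemma \ref{LEMineq}) is by contrast straightforward.
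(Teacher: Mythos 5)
Your skeleton is the right one (it is the paper's: reduce to the defect function $h$ via Lemma \ref{lem1}, get a differential inequality from Lemmas \ref{DERh} and \ref{tenseurine}, apply the maximum-principle Lemma \ref{LEMineq}), but there is a genuine gap: you have misread what $(K\kappa)$-radial-semiconvexity asks for, and as a result you discard exactly the factor that makes the argument close. By Definition \ref{def:semiconvex}, the target is
\[
\rho_x\bigl(v_t,\I(x)\bigr)\;\leq\; K\kappa\,\frac{t(1-t)}{2}\,|v_1-v_0|_x^2 ,
\]
with the quadratic factor $|v_1-v_0|_x^2$, and not the uniform bound $\rho_x(v_t,\I(x))\le K\kappa$ that your argument produces (``$\|h\|_\infty\le D''\kappa/4$, hence $\rho\le K\kappa$''). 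A bound without $|v_1-v_0|_x^2$ is far too weak for short chords and does not yield semiconvexity of $t\mapsto\rho_x(v_t,\bar\I(x))$ for any finite modulus. The factor is lost precisely when you absorb $|\dot y_t|^2|q_t-\bar q_t|^2$ into a universal constant: one must keep $|\dot y_t|_{y_t}\le E|v_1-v_0|_x$, so that the constant term in the differential inequality is $c=DE^4\kappa|v_1-v_0|_x^2$ and Lemma \ref{LEMineq} returns $h(t)\le DE^4\kappa\,t(1-t)|v_1-v_0|_x^2$, which is the needed modulus.

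The same retained factor also repairs the two points you flagged as loose ends, and your proposed fixes for them do not work as stated. The coefficient of $|\dot h|$ is $CE^2|v_1-v_0|_x$ (one power of the chord length), so it is made $\le 1$ simply by restricting to chords with $|v_1-v_0|_x<\nu$ for $\nu$ small; by contrast, multiplying $h$ by a constant leaves the ratio $\ddot h/|\dot h|$ unchanged, and re-running the maximum-principle argument with a coefficient $C'$ on $|\dot h|$ gives $0\ge (2-C')a-c$, which is only a contradiction when $C'<2$ (handling general $C'$ is exactly what Lemma \ref{LEMineqbis} is for, at the price of a factor $e^{1+C'}$). Likewise in (ii) the coefficient of $|h(t)|$ is $DE^4\alpha|v_1-v_0|_x^2\le 1$ for short chords, so no hypothesis ``$D'''\alpha\le1$'' is needed; and after the first application of Lemma \ref{LEMineq} one must feed $\|h\|_\infty\le \frac{DE^4}{3}\epsilon|v_1-v_0|_x^2$ back into the differential inequality and run the part-(i) argument once more to recover the quadratic modulus, rather than stopping at a sup bound. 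Finally, your treatment of long chords (``the crude bound is $\le K\kappa$ once $K$ is large'') cannot work, since $K$ is universal while $\kappa$ may be arbitrarily small; the correct route is to prove only the local statement for $|v_1-v_0|_x<\nu$ and invoke the proposition in Appendix \ref{App} that a locally $\delta$-radial-semiconvex set is $K^*\delta$-radial-semiconvex.
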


\begin{proof}[Proof of Lemma \ref{lem5}] 
We first prove assertion (i).
We need to show that there is a uniform constant $K>0$ and $\nu >0$ sufficiently small (see Appendix A)
such that, for any $v_0, v_1 \in \I(x)$ with $|v_0-v_1|_x < \nu$,
$$
\rho_x \bigl(v_t,\I(x)\bigr) \leq K \kappa \frac{t(1-t)}{2} \bigl| v_0-v_1\bigr|^2 \qquad \forall \, t \in [0,1].
$$
 As in Lemma \ref{generic} we set
$$
h(t) := \frac{ |v_t|_x^2}{2} - \frac{d( x, y_t)^2}{2} \qquad \forall \, t \in [0,1].
$$
By Lemma \ref{lem1} it is sufficient to show that
$$
h(t) \leq K \kappa \frac{t(1-t)}{2} \bigl| v_0-v_1\bigr|^2 \qquad \forall \, t \in [0,1],
$$
for some constant $K>0$. Let $v_0, v_1 \in \I(x)$ and $\nu>0$ with $ |v_1-v_0|_x <\nu \leq \omega$ be fixed.
By Lemma \ref{generic}, up to slightly perturbing $v_0,v_1$
we may assume that $h:[0,1] \rightarrow \R$ is semiconvex,  $C^2$ outside a finite set of
times $0<t_1< \ldots < t_N<1$, and not differentiable at $t_i$ for $i=1, \ldots, N$.
By Lemmas \ref{tenseurine} and \ref{DERh} (observe that $\kappa < \bar{\mu}$ and
$\I^{\bar{\mu}}(y_t) \subset \NF(y_t)$),
$$
\ddot{h}(t)\ge -C| \dot{h}(t)| |\dot {y_t}|_{y_t}  |{q_t-\bar q_t}|_{y_t} -D \max_{q\in [ \bar q_t ,q_t]} \Bigl\{ \rho_{y_t}(q,\I(y_t))\Bigr\}   |\dot {y_t}|_{y_t}^2| {q_t-\bar q_t}|_{y_t}^2,
$$
for every $t\in [0,1] \setminus \{t_1, \ldots, t_N\}$. Moreover, by compactness of $M$, there is a uniform constant $E>0$ such that
$$
\bigl| \dot{y}_t \bigr|_{y_t} \leq E \bigl| v_0-v_1 \bigr|_x \quad \mbox{ and } \quad  \bigl|q_t-\bar{q}_t \bigr|_{y_t} \leq E.
$$
Hence
\begin{multline}\label{16mars}
\ddot{h}(t) \ge -CE^2 | \dot{h}(t)| \bigl|v_1-v_0\bigr|_x - D E^4 \kappa \bigl|v_1-v_0\bigr|_x^2 \\ \forall \, t\in [0,1] \setminus \bigl\{t_1, \ldots, t_N\bigr\}.
\end{multline}
Taking $\nu \in (0,\omega)$ small enough yields
$$
\ddot{h}(t)\ge -| \dot{h}(t)| - D E^4 \kappa |v_1-v_0|^2
\qquad \forall \, t\in [0,1] \setminus \bigl\{t_1, \ldots, t_N\bigr\},
$$
so Lemma  \ref{LEMineq} gives 
$$
h(t) \leq D E^4 \kappa \, t(1-t)|v_1-v_0|_x^2 \qquad \forall \, t \in [0,1],
$$
which shows that $\I(x)$ is $(K\kappa)$-radial-semiconvex where $K>0$ is a uniform constant.\\
To prove (ii) we note that (\ref{16mars}) implies
\begin{equation}
\label{eq:h ddot}
\ddot{h}(t)\ge -CE^2 | \dot{h}(t)| \bigl|v_1-v_0\bigr|_x - D E^4 \alpha |h(t)| \bigl|v_1-v_0\bigr|_x^2 - DE^4\epsilon \bigl|v_1-v_0\bigr|_x^2,
\end{equation}
which (by choosing $\nu \in (0,\omega) $ sufficiently small) gives
$$
\ddot{h}(t)\ge -| \dot{h}(t)| - \|h\|_{\infty}  - D E^4 \epsilon |v_1-v_0|_x^2 \qquad \forall \, t\in [0,1] \setminus \bigl\{t_1, \ldots, t_N\bigr\}.
$$
Hence, by the second part of Lemma \ref{LEMineq} we obtain
$$
\|h\|_\infty \leq  \frac{D E^4}{3} \epsilon |v_1-v_0|_x^2.
$$
Plugging this information back into \eqref{eq:h ddot} gives, for $\nu$ sufficiently small,
$$
\ddot{h}(t)\ge -CE^2 | \dot{h}(t)| \bigl|v_1-v_0\bigr|_x-2DE^4\epsilon \bigl|v_1-v_0\bigr|_x^2.
$$
We conclude as in the first part of the proof.
 \end{proof}
 
Returning to the proof of Theorem $\ref{ConNF}$, we say that the property $\PP(r)$ is satisfied if for any $x \in M$ the set $B_x(r)\cap \I(x)$ is convex
(here $B_x(r)$ denotes the unit open ball in $T_xM$ with respect to $|\cdot|_x$).
If  $\PP(r)$ is satisfied for any $r\geq 0$, then all the injectivity domains of $M$ are convex.
Since $r_0 := \inf_{x \in M,\,v \in \TCL(x)}  |v|_x$ is strictly positive,
$\PP(r) $ is true for any $r\leq r_0$, hence the set of $r\geq 0$ such that $\PP(r)$ is satisfied is an interval $J$ with positive length.
Moreover, since the convexity property is closed, $J$ is closed. Consequently, in order to prove that $J=[0,\infty)$, it is sufficient to show that $J$ is open.
 
\begin{Lem}{\label{open}}
The set of $r$ for which $\PP(r) $ holds is open in $[0,\infty)$.
\end{Lem}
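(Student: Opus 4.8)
The plan is to argue by contradiction, fixing $\bar r\ge 0$ for which $\PP(r)$ holds for all $r\le\bar r$ (so $\bar r\in J$) and showing that $\PP(\bar r+1/k)$ cannot fail for every $k\in\N$; since $J$ is an interval this means $\bar r$ is not its maximum, so $J=[0,\infty)$ and $J$ is open. For each $k$, failure of $\PP(\bar r+1/k)$ yields $x_k\in M$ and $v_0^k,v_1^k\in\I(x_k)\cap B_{x_k}(\bar r+1/k)$ whose segment leaves $\I(x_k)$. I would pass to a connected component $(a_k,b_k)$ of the set of times where $v_t^k:=(1-t)v_0^k+tv_1^k$ is outside $\I(x_k)$, reparametrize $[v_{a_k}^k,v_{b_k}^k]$ onto $[0,1]$, localize to a short subsegment, and perturb via Lemma \ref{generic}; this puts us in the situation where $v_0^k,v_1^k\in\I(x_k)$, $|v_1^k-v_0^k|_{x_k}\to 0$, and the semiconvex function
\[
h_k(t):=\frac{|v_t^k|_{x_k}^2}{2}-\frac{d(x_k,y_t^k)^2}{2}\qquad(y_t^k:=\exp_{x_k}v_t^k)
\]
is nonnegative, vanishes at $t=0,1$, is strictly positive somewhere, and is piecewise $C^2$. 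Since $B_{x_k}(\bar r)\cap\I(x_k)$ is convex, such a segment must reach radius $>\bar r$, so by Lemma \ref{lem1} one has $\sup_t\rho_{x_k}(v_t^k,\I(x_k))\to 0$ and $\sup_t|v_t^k|_{x_k}\to\bar r$ as $k\to\infty$.

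The core of the argument is then to run the bootstrap of Lemma \ref{lem5} along this bad segment. By Lemma \ref{Lipcontrol}, for $k$ large $v_t^k\in\I^{\bar\mu}(x_k)\subset\NF(x_k)$ and $\bar q_t^k:=-d_{v_t^k}\exp_{x_k}(v_t^k)\in\I^{\bar\mu}(y_t^k)$, while Lemmas \ref{lem1}--\ref{lem2} give $\rho_{y_t^k}(\bar q_t^k,\I(y_t^k))\le C_1 h_k(t)$; the velocity $q_t^k\in\I(y_t^k)$, with $|q_t^k|_{y_t^k}=d(x_k,y_t^k)$, is defined whenever $y_t^k\notin\cut(x_k)$ and coincides with $\bar q_t^k$ when $h_k(t)=0$. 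The key (and hardest) step is to prove that $[\bar q_t^k,q_t^k]\subset\NF(y_t^k)$ and that
\[
\sup_{q\in[\bar q_t^k,q_t^k]}\rho_{y_t^k}(q,\I(y_t^k))\le C_2 h_k(t)+\eta_k,\qquad\eta_k\to 0,
\]
the point being that, up to the vanishing error $\eta_k$ produced by $\sup_t|v_t^k|_{x_k}-\bar r\to 0$, this auxiliary segment essentially lies in $B_{y_t^k}(\bar r)$, so convexity of $B_{y_t^k}(\bar r)\cap\I(y_t^k)$ (again from $\PP(\bar r)$) and convexity of the radial distance to a convex set reduce the supremum to the endpoint bound $C_1 h_k(t)$. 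Feeding this into Lemmas \ref{tenseurine} and \ref{DERh}, with the uniform bounds $|\dot y_t^k|_{y_t^k}\le E|v_1^k-v_0^k|_{x_k}$ and $|q_t^k-\bar q_t^k|_{y_t^k}\le E$, one obtains off the singular times an estimate $\ddot h_k(t)\ge-|\dot h_k(t)|-\|h_k\|_\infty-\mathrm{const}\cdot\eta_k$; the second part of Lemma \ref{LEMineq} then forces $\|h_k\|_\infty\le\mathrm{const}\cdot\eta_k\to 0$, and a final rescaling/compactness argument (or, when the error can be removed outright, a direct application of the first part of Lemma \ref{LEMineq} with $c=0$) produces a nonnegative, nontrivial semiconvex profile on $[0,1]$ that vanishes at the endpoints and satisfies $\ddot h\ge-|\dot h|$, hence is identically zero --- contradicting that the segment genuinely exits $\I(x_k)$.

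The main obstacle, as signalled above, is the control of the auxiliary segment $[\bar q_t^k,q_t^k]$: one must guarantee it stays in $\NF(y_t^k)$ --- without which Lemma \ref{DERh} is not even meaningful --- and that $\sup_q\rho_{y_t^k}(q,\I(y_t^k))$ is controlled by $h_k(t)$ up to an error vanishing with the ``excess radius'' $\sup_t|v_t^k|_{x_k}-\bar r$. This is exactly where nonfocality enters (through $\I^{\bar\mu}(\cdot)\subset\NF(\cdot)$ and the quantitative transversality of the at most two minimizing geodesics from $y_t^k$ to $x_k$ encoded in $\delta(TM)>0$), together with the already-established convexity $\PP(\bar r)$, and it requires carefully tracking, via the geodesic flow (Lemma \ref{lem2}), how the small amount by which $v_t^k$ pokes past the safe radius $\bar r$ transfers to the velocities $\bar q_t^k$ and $q_t^k$ at $y_t^k$.
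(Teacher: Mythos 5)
Your proposal correctly identifies the key ingredients --- Lemmas \ref{tenseurine}, \ref{DERh}, \ref{LEMineq}, the Lipschitz control of Lemma \ref{Lipcontrol}--\ref{lem2}, and especially the difficulty of keeping the auxiliary segment $[\bar q_t,q_t]$ inside $\NF(y_t)$ while controlling $\sup_{q\in[\bar q_t,q_t]}\rho_{y_t}(q,\I(y_t))$ --- and the preliminary normalizations ($\sup_t\rho_{x_k}(v^k_t,\I(x_k))\to 0$, $\sup_t|v^k_t|_{x_k}\to\bar r$) are correct. But the way you propose to close the argument has a genuine gap, and it is exactly where the paper's proof differs from yours.

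The gap is in the last step. Lemma \ref{LEMineq}(ii) only yields $\|h_k\|_\infty\lesssim\eta_k\to 0$, which is perfectly compatible with the segment exiting $\I(x_k)$: exiting only forces $\|h_k\|_\infty>0$. Rescaling $\tilde h_k:=h_k/\|h_k\|_\infty$ does not rescue this. In the inequality $\ddot h_k\ge-|\dot h_k|-\|h_k\|_\infty-\mathrm{const}\cdot\eta_k$, the middle term rescales to $-1$, and the error rescales to $-\mathrm{const}\cdot\eta_k/\|h_k\|_\infty$ --- and your own bound $\|h_k\|_\infty\lesssim\eta_k$ forces $\eta_k/\|h_k\|_\infty$ to stay bounded below. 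Any limit profile therefore satisfies $\ddot h\ge-|\dot h|-C$ with some $C\ge 1$, which Lemma \ref{LEMineq}(i) shows is fully consistent with $\|h\|_\infty=1$. There is no contradiction, and the alternative ``remove the error outright and apply Lemma \ref{LEMineq} with $c=0$'' is not available either, since the excess radius $1/k$ is strictly positive.

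What actually closes the argument in the paper is a direct, iterative bootstrap rather than a one-shot contradiction. Step~1 shows the sets $\I(x)\cap B_x(r+\beta)$ are $(K\beta)$-radial-semiconvex for all $\beta$ small. Step~2 is a self-improvement: if they are $A$-radial-semiconvex for all $x$, then they are $(A/2)$-radial-semiconvex. The mechanism you are missing is that the error in the sup-estimate over the auxiliary segment is \emph{proportional to the current semiconvexity constant $A$}, multiplied by a small geometric factor (of order $\sqrt{\beta}$ in the delicate case where $|q_t-q'_t|$ is not small), rather than being an absolute quantity $\eta_k$ depending only on the excess radius. Feeding this into Lemma \ref{lem5}(ii) therefore strictly reduces $A$, and iterating Step~2 infinitely many times yields $A=0$, i.e.\ actual convexity, for each fixed small $\beta$. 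Without this fixed-point/iteration structure, one cannot upgrade ``$\|h_k\|_\infty$ is small'' to ``$\|h_k\|_\infty=0$,'' which is precisely the unjustified leap in your final paragraph. Your local analysis of $[\bar q_t,q_t]$ is essentially the content of the paper's Step~2 once the proportionality to $A$ is made explicit, so the fix is to replace the contradiction/compactness wrapper with the iterated halving argument.
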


\begin{proof}[Proof of Lemma \ref{open}]
Assume that $\PP(r)$ holds. We want to prove that, if $\beta>0$ is sufficiently small then $P(r+\beta)$ holds as well. The proof is divided in two steps: first we will show that, for any $\beta \in (0,\bar{\mu}/(2K))$ (here $\mu$
and $K$ are as in Lemma \ref{lem5}),
the sets $B_x(r+\beta)\cap \I(x)$ are $(K\beta)$-radial-semiconvex for any $x\in M$. Then, in Step $2$
we show the following ``bootstrap-type'' result: if the sets  $B_x(r+\beta)\cap \I(x)$ are $A$-{radial-semiconvex} for all $x\in M$, then they are indeed $(A/2)$-{radial-semiconvex}.
The combination of Steps 1 and 2 proves that, for any $x \in M$ and $\beta >0$ small, the sets $B_x(r+\beta)\cap \I(x)$ are $(K\beta/2^k)$-{radial-semiconvex} for any $k \in \N$,
hence convex.
\subsection*{{ Step $1$:}} \textit{$\I(x)\cap B_x(r+\beta)$ is $(K\beta)$-{radial-semiconvex} for any $\beta \in (0,\bar{\mu}/(2K))$.}

Fix $x \in M$ and $\nu>0$. Thanks to Lemma \ref{Lipcontrol}, for any $v_0,v_1\in \I(x)$ with $|v_0 - v_1|_x < \nu$ we have
$$
v_t \in \I^{K \nu }(x) \quad \mbox{ and } \quad \bar q_t \in \I^{K \nu}(y_t).
$$
Let $\beta>0$ and $v_0, v_1\in B_x(r+\beta) \cap \I(x)$ be fixed. By construction
$$
|\bar q_t|_{y_t}=|v_t|_{x} < r+ \beta, \quad | q_t |_{y_t} \le |v_t|_{x} <  r+ \beta, \quad q_t \in \I(y_t).
$$
Since $\bar q_t \in \I^{ K \nu}(y_t)$ we can find  ${q'_t}\in \bar{\I(y_t)}\cap B_{y_t}(r+\beta) $ such that 
$$
\rho_{y_t} \bigl(\bar q_t, \I(y_t)\bigr)=|\bar q_t-{q'_t}| \le K \nu.
$$
\begin{figure}\label{figure1}
\centering
\includegraphics[scale=0.7]{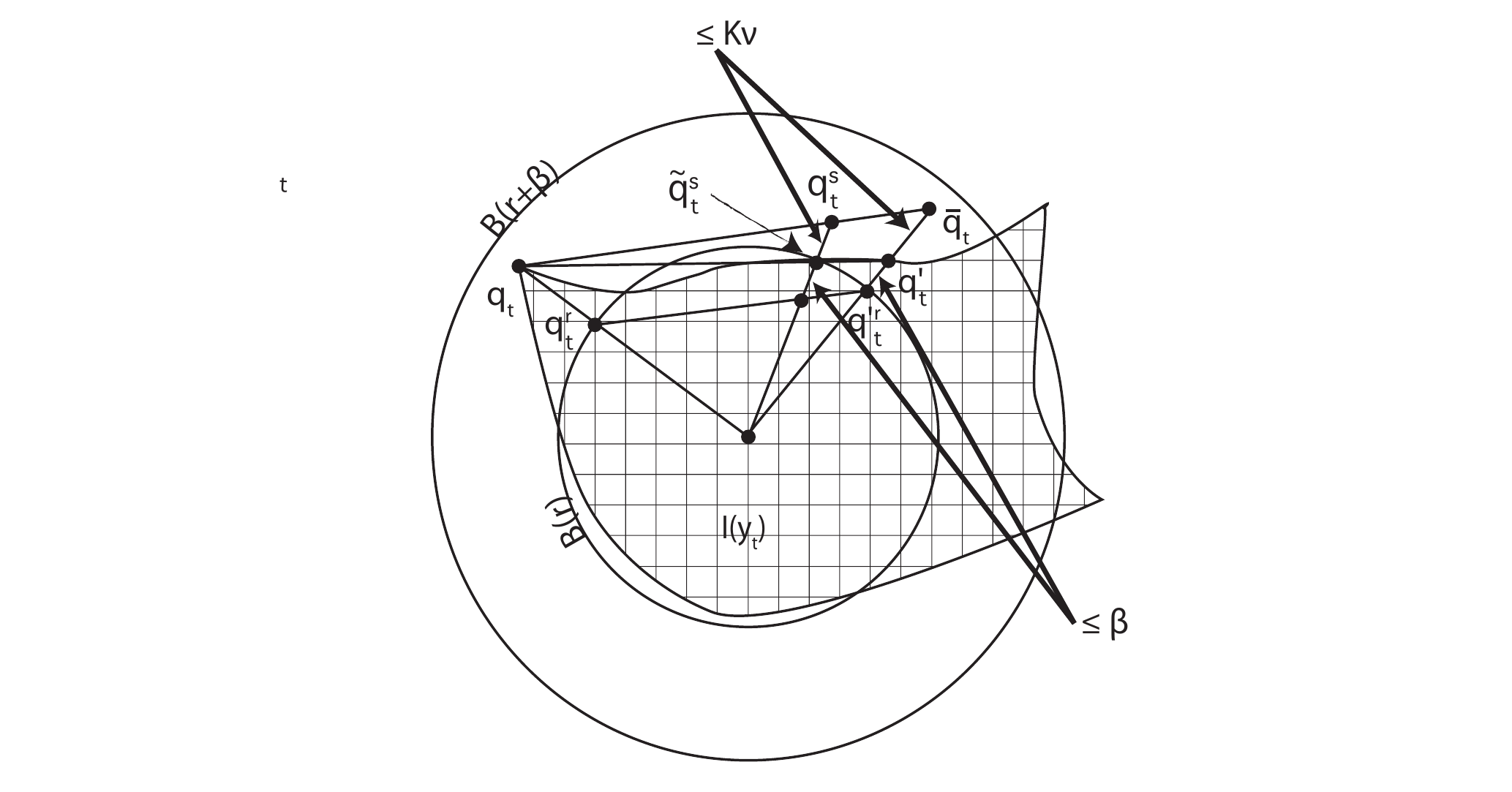}
\caption{Definitions}
\end{figure}
Moreover, using that $\I(y_t)$ is starshaped and that $q_t, q'_t \in B_{y_t}(r+\beta)$, we can find $ q^r_t$, $ {q'}^r_t \in  \bar{B}_{y_t}(r)\cap \bar{\I(y_t)}$
such that $\rho_{y_t} ( q_t, q_t^r) \le \beta$ and  $\rho_{y_t}( {q'_t}, {q'}^r_t) \le \beta$. Recalling that by assumption $\PP(r)$, we have
$[  q^r_t, {q'}^r_t]\subset \bar{\I(y_t)}$, which implies (see Figure $1$)
\begin{eqnarray*}
\max_{q \in [\bar q_t,q_t]} \Bigl\{ \rho_{y_t} \bigl(q, \I(y_t)\bigr) \Bigr\} &  \leq & \max_{q \in [\bar q_t,q_t]} \Bigl\{  \rho_{y_t} \bigl(q,[  q^r_t, {q'}^r_t]\bigr)\Bigr\} \\
& = & \max \Bigl\{   \rho_{y_t} \bigl({q}_t, q^r_t\bigr),  \rho_{y_t} \bigl(\bar q_t, {q'}^r_t\bigr) \Bigr\} \\
& \leq &  \beta +  K \nu,
\end{eqnarray*}
where at the second line we used that the maximum is attained at one of the extrema of the segment.
Thus, Lemma \ref{lem5}(i) gives that $B_x(r+\beta)\cap \I(x)$ is $(K\beta + K^2 \nu)$-semiconvex for any $\beta, \nu >0$ such
that $\beta+K\nu < \bar{\mu}/K$. We conclude by letting $\nu \downarrow 0$.

\subsection*{{ Step $2$:}}\textit{If all $\I(x)\cap B_x(r+\beta)$ are $A$-{radial-semiconvex}, then they are $(A/2)$-{radial-semiconvex}.}

We want to prove that the following holds: there exists $\beta_0>0$ small such that, if for some $A>0$ the sets $\I(x)\cap B_x(r+\beta)$ are $A$-{radial-semiconvex}
for all $x \in M$ and $\beta<\beta_0$, then they are indeed $(A/2)$-{radial-semiconvex}. To this aim, by the results in Appendix \ref{App}, we need to prove that there exists $\nu >0$ sufficiently small such that for every $\beta \in (0,\beta_0)$ ($\beta_0$ to be fixed later, independently of $A$) and
$v_0, v_1 \in B_x(r+\beta) \cap \I(x)$ with $|v_0-v_1|_x < \nu$, we have
$$
\rho_x \bigl(v_t,\I(x)\bigr) \leq \frac{A}{2{K^*}} \frac{t(1-t)}{2} \bigl| v_0-v_1\bigr|^2 \qquad \forall \, t \in [0,1],
$$
where $K^*$ is given by Proposition \ref{equivalence}. Let $v_0, v_1 \in \I(x)$ and $\nu>0$ with $ |v_1-v_0|_x <\nu $, and for
$t, s \in [0,1]$ set $q_t^s := (1-s)\bar q_t +s q_t$ and denote by 
$\tilde{q}_t^s$ the intersection of the segments $[0,q_t^s]$ and $[q_t,q_t']$ (see Figure $1$). 
We have (by Lemmas \ref{lem1} and \ref{lem2})
\begin{eqnarray*}
\rho_{y_t} \bigl(q_t^s, \I(y_t) \bigr) & \leq &  \rho_{y_t} \bigl(q_t^s, \tilde{q}_t^s\bigr)+ \rho_{y_t} \bigl(\tilde{q}_t^s,\I(y_t)\bigr) \\
& \leq &  \rho_{y_t} \bigl(\bar{q}_t, q_t' \bigr) + \rho_{y_t} \bigl(\tilde{q}_t^s,\I(y_t)\bigr) \\
& = &  \rho_{y_t}  \bigl(\bar{q}_t, \I(y_t) \bigr)  + \rho_{y_t} \bigl(\tilde{q}_t^s,\I(y_t)\bigr) \\
& \leq & K \rho_x \bigl(v_t, \I(x)\bigr)+   \rho_{y_t} \bigl(\tilde{q}_t^s,\I(y_t)\bigr) \\
& \leq & K^2 \left( \frac{|v_t|_x^2}{2} - \frac{d(x,y_t)^2}{2} \right) +   \rho_{y_t} \bigl(\tilde{q}_t^s,\I(y_t)\bigr).
 \end{eqnarray*} 
Therefore, for every $t\in [0,1]$ we get
\begin{eqnarray}
\label{18mars1}
\quad \max_{q\in [\bar{q}_t,q_t]} \Bigl\{ \rho_{y_t} \bigl(q, \I(y_t) \bigr) \Bigr\} \leq K^2 \left( \frac{|v_t|_x^2}{2} - \frac{d(x,y_t)^2}{2} \right) + \max_{\hat{q} \in [q_t,q_t']} \Bigl\{ \rho_{y_t} \bigl(\hat{q},\I(y_t)\bigr) \Bigr\}.
\end{eqnarray}
Set for every $t, s\in [0,1], \hat{q}_t^s := (1-s)q_t' + s q_t$. By the $A$-radial-semiconvexity we have
\begin{eqnarray}\label{18mars}
\rho_{y_t} \bigl(\hat{q}_t^s,\I(y_t)\bigr) \leq A \frac{s(1-s)}{2} | q_t -q_t' |_{y_t}^2.
\end{eqnarray}
Then, we finally obtain for $\nu >0$ small enough,
$$
\sup_{q \in [\bar{q}_t,q_t]} \Bigl\{ \rho_{y_t} \bigl(q, \I(y_t) \bigr)\Bigr\} \leq  \min \left\{ K^2 \left( \frac{|v_t|_x^2}{2} - \frac{d(x,y_t)^2}{2} \right) +
A | q_t -q_t' |^2_{y_t}, \bar{\mu} \right\},
$$
for every $t\in [0,1]$. Two cases may appear:

\smallskip

\noindent \textit{First case:} $| q_t-q_t' |_{y_t}^2\le 1/(2K{K^*})$.

In this case, by Lemma \ref{lem5}(ii) we deduce that $\I(x)\cap B_x(r+\beta)$ is $(A/2)$-{radial-semiconvex}.

\smallskip

\noindent \textit{Second case:}  $| q_t-q_t'|_{y_t}^2 > 1/(2K{K^*})$.

\begin{figure}\label{figure2}
\centering
\includegraphics[scale=0.7]{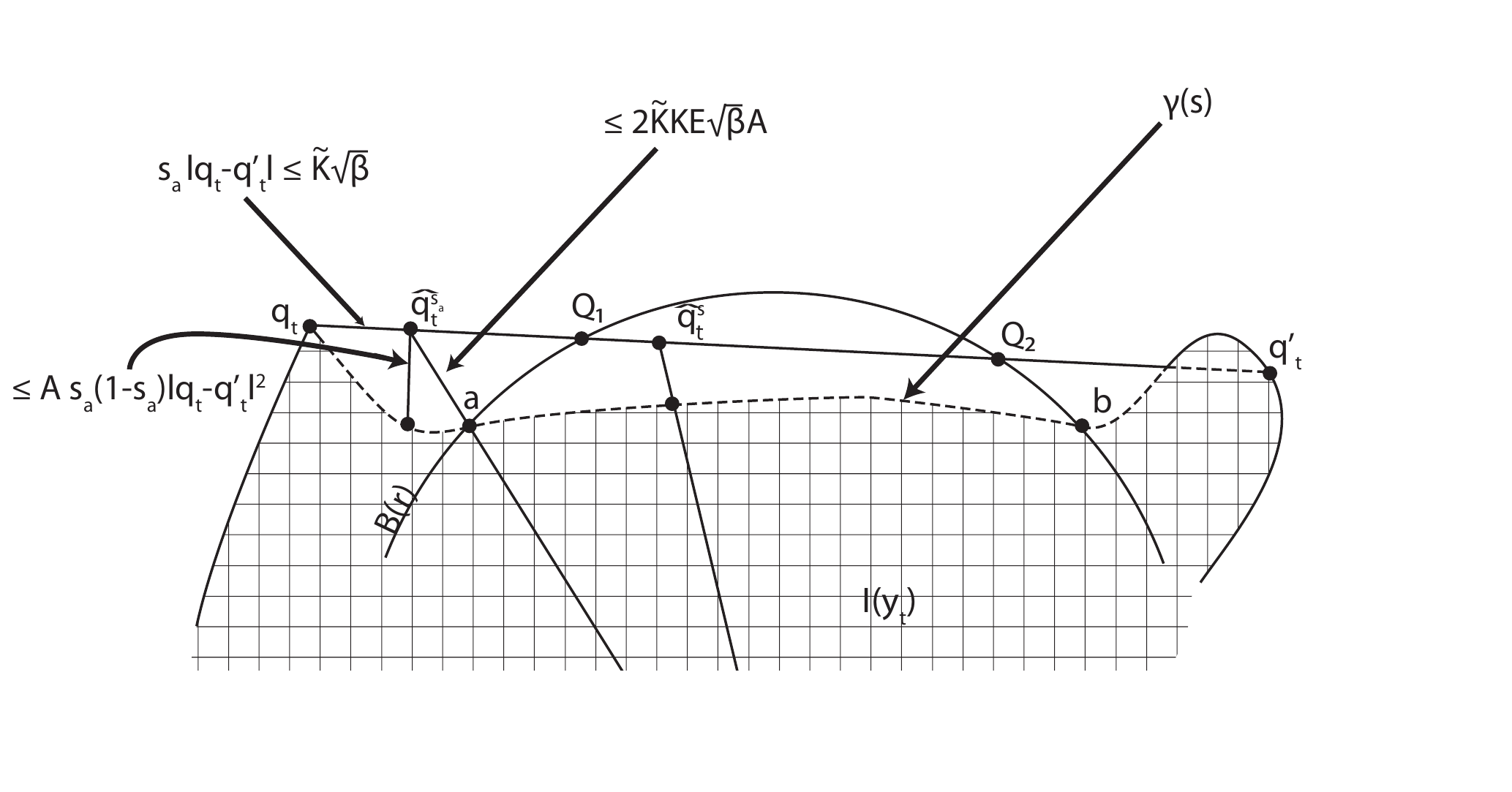}
\caption{Estimations}  
\end{figure}
We work in the plane generated by $0, q_t, q_t'$ in $T_{y_t}M$, and we define the curve $\gamma : [0,1] \rightarrow \I(y_t)$ as (see Figure $2$)
$$
\gamma (s) :=  w \quad \mbox{ where } \quad \rho_{y_t} \bigl( \hat{q}_t^s, \I(y_t)\bigr) = | \hat{q}_t^s-w|_{y_t}  \qquad \forall \,s \in [0,1],
$$
and denote by $a=\gamma(s_a)$ the first point  of $ \gamma$ which enters $ \bar{B}_{y_t}(r)$ and  $b=\gamma( s_b)$ the last one  (see Figure $2$). Since both $q_t, q_t'$ belong to $B_{y_t}(r+\beta)$ and $| q_t-q_t'|_{y_t}^2 > 1/(2K{K^*})$, the intersection of the segment $[q_t,q_t']$ with $B_{y_t}(r)$ is a segment $[Q_1, Q_2]$ such that
$$
\bigl|Q_1 - q_t\bigr|_{y_t}, \, \bigl|Q_2 - q_t'\bigr|_{y_t} \leq \tilde{K}  \sqrt{\beta},
$$
for some uniform constant $\tilde{K}>0$ and $\beta>0$ small enough. Since
$$
\bigl| q_t - \hat{q}_t^{s_a}\bigr|_{y_t} \leq \bigl|Q_1 - q_t\bigr|_{y_t} \quad \mbox{ and } \quad \bigl| q_t' - \hat{q}_t^{s_b}\bigr|_{y_t} \leq \bigl|Q_2 - q_t'\bigr|_{y_t},
$$
this implies that both $s_a$ and $1-s_b$ are bounded by
$\frac{\tilde{K} \sqrt{\beta}}{ |q_t-q_t'|_{y_t}} <\sqrt{2K{K^*} }\tilde{K} \sqrt{\beta}$.
Let us distinguish again two cases:\\
- On $[s_a,s_b]$, $\PP(r)$ is true so $[a,b] \subset \bar{\I(y_t)} $. Hence 
$$
\sup_{q \in [\tilde{q}_t^{s_a}, \tilde{q}_t^{s_b}]} \Bigl\{ \rho_{y_t} \bigl(q, \I(y_t)\bigr) \Bigr\}
\leq  \max \Bigl\{ \rho_{y_t} \bigl(\tilde{q}_t^{s_a}, \I(y_t)\bigr), \rho_{y_t} \bigl(\tilde{q}_t^{s_b}, \I(y_t)\bigr) \Bigr\}.
$$
- On $[0,s_a]$ (similarly on $[1-s_b,1]$), the $A$-{radial-semiconvex}of $\bar{B}_{y_t}(r+\beta)\cap \bar{\I(y_t)}$ yields  (by (\ref{18mars}))
$$
\rho_{y_t} \bigl(\hat{q}_t^s, \I(y_t)\bigr) \leq A \frac{s(1-s)}{2} |q_t-q_t'|_{y_t}^2\le A s_a | q_t-q_t'|_{y_t}^2 \le \sqrt{2K{K^*} }\tilde{K}E \sqrt{\beta} A,
$$
where we used that $ | q_t-q_t'|_{y_t}^2\leq E$ for some uniform constant $E>0$. Recalling (\ref{18mars1}) we obtain
$$
\sup_{q \in [q_t,\bar{q}_t]} \Bigl\{ \rho_{y_t} \bigl(q, \I(y_t) \bigr) \Bigr\} \leq K^2 \left( \frac{|v_t|_x^2}{2} - \frac{d(x,y_t)^2}{2} \right) + \sqrt{2K{K^*} }\tilde{K}E \sqrt{\beta} A.
$$
Hence, if we choose $\beta_0$ sufficiently small so that $\sqrt{2K{K^*} }\tilde{K}E \sqrt{\beta_0} \leq 1/(2K{K^*})$ we get
$$
\sup_{q \in [q_t,\bar{q}_t]} \Bigl\{ \rho_{y_t} \bigl(q, \I(y_t) \bigr) \Bigr\}  \leq K^2 \left( \frac{|v_t|_x^2}{2} - \frac{d(x,y_t)^2}{2} \right) + \frac{A}{2K{K^*}},
$$ 
and we conclude again by Lemma \ref{lem5}(ii).

\smallskip

As explained above, combining Steps 1 and 2 we infer that, for $\beta>0$ small enough, all the $\I(x)\cap B_x(r+\beta)$
are convex. This shows that the interval $J$ is open in $[0,\infty)$, concluding 
 the proof of Lemma \ref{open} and in turn the proof of Theorem \ref{ConNF}.
\end{proof}

As we will see in the next section, we can extract from the proof of Theorem \ref{ConNF} some ideas which will allow us to treat the case of Riemannian manifolds which do not satisfy the nonfocality assumption. Such a result will play a major role in \cite{FGR2d}.
 
\section{General version of the proof of Theorem \ref{ConNF}} 
\label{sect:general}

Let $Z$ be a compact subset in $TM$ whose fibers are denoted by $Z(x)$.
We say that the extended Ma--Trudinger--Wang condition {\bf(}$\bar{{\bf MTW}(-D\rho,C)}${\bf)} holds on
$Z$ if there are constants $C, D>0$ such that, for any $(x,v)\in TM$ with $v\in
Z(x)$, 
$$
\overline{\S}_{(x,v)}(\xi,\eta)\geq -C \left| \langle \xi, \eta\rangle_x \right|
|\xi|_x |\eta|_x - D \rho_x(v,\I(x)) |\xi|_x^2 |\eta|_x^2    \qquad \forall\,
\xi, \eta \in T_xM.
$$
The following improvement of Theorem \ref{ConNF} can be proved by the same method. Note that we do need assume the manifold to be nonfocal.

\begin{Thm}{\label{ConNFbis}}
Let $(M,g)$ be a smooth compact Riemannian manifold, assume that the following property holds: For every $r>0$ such that $B_x(r)\cap \I(x)$ is convex for all $x\in M$, there are $\bar{\beta}(r)>0$ and a compact set $Z\subset TM$ with radial fibers (cf. Definition \ref{def:radial}) satisfying the following properties:
\begin{enumerate}
\item  There are $C,D>0$ such that {\bf(}$\bar{{\bf MTW}(-D\rho,C)}${\bf)} holds on $Z$.
\item There is $K>0$ such that 
$$
\rho_x \bigl(v,\I(x)\bigr) \leq  K \left(  |v|_x^2 -
d\bigl(x,\exp_{x}(v)\bigr)^2 \right) \qquad \forall (x,v)\in Z.
$$
\item  $\forall \,x \in M, \forall \,\beta \in(0,\bar{\beta}(r))$, $\I(x)\cap B_x(r+\beta) \subset Z(x) \subset \bar \NF(x)$.
\item $\forall \,x\in M, \forall \,\beta \in (0,\bar{\beta}(r)), \forall \,v_0, v_1 \in \I(x)\cap B_x(r+\beta)$, $v_t \in Z(x)$ and $[q_t,\bar q_t]
\subset Z(y_t)$.
\end{enumerate}
Then all injectivity domains of $M$ are convex.
\end{Thm}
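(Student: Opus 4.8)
The plan is to mimic the proof of Theorem \ref{ConNF} verbatim, replacing every structural use of the nonfocality hypothesis by the corresponding hypothesis (1)--(4) of the statement. As in the nonfocal case, one introduces the property $\PP(r)$ -- ``$B_x(r)\cap\I(x)$ is convex for all $x\in M$'' -- and observes that the set $J$ of $r\ge 0$ for which $\PP(r)$ holds is a nonempty ($r\le r_0$) closed interval; so it suffices to prove $J$ is open. Fix $r\in J$. Hypotheses (1)--(4) furnish, for this $r$, a radius $\bar\beta(r)>0$ and a compact set $Z\subset TM$ with radial fibers; the whole argument below will take place with $\beta\in(0,\bar\beta(r))$, so that by (3)--(4) the relevant objects $v_t$, $\bar q_t$, $q_t$, and the segments $[\bar q_t,q_t]$ all lie inside $Z$ (over the appropriate base point), which is contained in $\bar\NF$, hence where $\ov\S$ and $\hat c$ are defined.

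First I would redo Lemma \ref{lem5} with $Z$ in place of $\I^{\bar\mu}(M)$: hypothesis (1) is exactly the substitute for Lemma \ref{tenseurine} (it is literally {\bf(}$\bar{{\bf MTW}(-D\rho,C)}${\bf)} on $Z$), and hypothesis (2) is the substitute for Lemma \ref{lem1} -- these are the only two places where nonfocality entered Lemma \ref{lem5}. The computation of $\dot h$, $\ddot h$ (Lemma \ref{DERh}) needs $[\bar q_t,q_t]\subset\NF(y_t)$, which is guaranteed by (4) together with (3). With those replacements, the proof of Lemma \ref{lem5}(i)--(ii) goes through unchanged: using Lemma \ref{DERh}, the bound from (1), the compactness estimates $|\dot y_t|_{y_t}\le E|v_0-v_1|_x$, $|q_t-\bar q_t|_{y_t}\le E$, and finally Lemma \ref{LEMineq}, one gets the radial-semiconvexity conclusions, where the passage from $h$ to $\rho_x(v_t,\I(x))$ uses (2). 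I would also note that Lemma \ref{lem2} (the bi-Lipschitz comparison of radial distances under the geodesic flow) is a general fact requiring no nonfocality, so it remains available.

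Next comes the open-ness argument, i.e. the analogue of Lemma \ref{open}, which is the heart of the matter and where I expect the only real bookkeeping. Step 1: for $\beta\in(0,\bar\beta(r))$ small (and $\beta<\bar\mu/(2K)$-type smallness replaced by the smallness implicit in $Z$), one shows $B_x(r+\beta)\cap\I(x)$ is $(K\beta)$-radial-semiconvex, exactly as before: given $v_0,v_1$ close, Lemma \ref{Lipcontrol} (which only uses Lipschitz regularity of $t_{cut}$ and Lemma \ref{lem2}, no nonfocality) puts $\bar q_t$ within $K\nu$ of $\I(y_t)$; then one finds $q_t^r,{q'_t}^r\in\bar B_{y_t}(r)\cap\bar\I(y_t)$, uses $\PP(r)$ to get $[q_t^r,{q'_t}^r]\subset\bar\I(y_t)$, and bounds $\max_{q\in[\bar q_t,q_t]}\rho_{y_t}(q,\I(y_t))\le\beta+K\nu$; Lemma \ref{lem5}(i) then applies and one lets $\nu\downarrow0$. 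Step 2 (the bootstrap: $A$-radial-semiconvex $\Rightarrow$ $(A/2)$-radial-semiconvex for all $\beta<\beta_0$, with $\beta_0$ independent of $A$) is copied line for line: the decomposition $\rho_{y_t}(q_t^s,\I(y_t))\le\rho_{y_t}(q_t^s,\tilde q_t^s)+\rho_{y_t}(\tilde q_t^s,\I(y_t))$, the inequality \eqref{18mars1}, the two-case split on whether $|q_t-q_t'|_{y_t}^2$ is $\le$ or $>1/(2KK^*)$, the $\sqrt\beta$-estimates on $s_a,1-s_b$ in the second case, and the final appeal to Lemma \ref{lem5}(ii). Iterating Steps 1--2 gives $(K\beta/2^k)$-radial-semiconvexity for all $k$, hence convexity of $B_x(r+\beta)\cap\I(x)$, so $r+\beta\in J$ and $J$ is open; therefore $J=[0,\infty)$ and all injectivity domains are convex. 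The main obstacle is purely organizational: one must check that every smallness threshold ($\nu$, $\beta_0$, the $\bar\mu$-type constraints) can be chosen compatibly with, and below, the single $\bar\beta(r)$ provided by the hypothesis -- but since all these thresholds depend only on the uniform constants $C,D,K,K^*,E,\tilde K$ and not on $A$, this causes no circularity.
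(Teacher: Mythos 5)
Your high-level strategy --- bootstrap on $\PP(r)$, show $J$ is a nonempty closed interval, prove openness via a two-step argument (initial $(K\beta)$-radial-semiconvexity, then $A\mapsto A/2$ improvement) --- is exactly the paper's, and your accounting of where nonfocality enters the nonfocal proof (Lemma~\ref{lem1}, Lemma~\ref{tenseurine}, the containment $\I^{\bar\mu}\subset\NF$) and how hypotheses (1)--(4) replace it is correct. However, the paper does not simply copy Lemma~\ref{lem5}(i)--(ii) and Lemma~\ref{open}'s Step 2 verbatim. It instead introduces Lemma~\ref{LEMineqbis}, which handles the differential inequality $\ddot h\ge -C|\dot h|-c$ for an \emph{arbitrary} constant $C$ (giving $h\le 4ce^{1+C}t(1-t)$) rather than normalizing $C$ to $1$ by shrinking the local scale $\nu$; and it replaces the two-part Lemma~\ref{lem5} by a single, global Lemma~\ref{lem5bis} which turns a uniform bound $\sup_{q\in[\bar q_t,q_t]}\rho_{y_t}(q,\I(y_t))\le\kappa$ directly into $(\kappa\bar K)$-radial-semiconvexity. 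Consequently its Step~2 is organized differently from what you propose: the entry/exit $\gamma$-curve argument is run first in $T_xM$ along $[v_0,v_1]$, yielding the uniform estimate $\rho_x(v_t,\I(x))\le AE\tilde K\sqrt\beta$ for \emph{all} $t$; this is transferred to $T_{y_t}M$ via Lemma~\ref{lem2} and combined with the same argument on $[q_t',q_t]$ to give $\max_{q\in[\bar q_t,q_t]}\rho_{y_t}(q,\I(y_t))\le (K+1)AE\tilde K\sqrt\beta\le A/(2\bar K)$ for $\bar\beta(r)$ small --- a pure $\kappa$-bound, fed into the one-part Lemma~\ref{lem5bis}. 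This avoids both the ``$\alpha(\cdot)+\epsilon$'' form of Lemma~\ref{lem5}(ii) and the case split on $|q_t-q_t'|^2\gtrless 1/(2KK^*)$.

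Your alternative (keeping Lemma~\ref{LEMineq}'s second part by first normalizing the first-order coefficient via a small $\nu$, using a Lemma~\ref{lem5}(ii) analogue, and the two-case split in $T_{y_t}M$) does appear to go through: the $\bar\mu$-clamps in the nonfocal Lemma~\ref{lem5}(ii) are there only to guarantee the segment $[\bar q_t,q_t]$ lies where $\ov\S$ is defined and estimated, which your hypotheses (3)--(4) give directly, and the relevant normalization constants ($CE^2$, $DE^4\alpha$ with $\alpha=K^2$ from hypothesis~(2)) are uniform, so $\nu$ can be chosen once and for all, independently of $A$. But be aware that a na\"ive substitution of Lemma~\ref{LEMineqbis} for Lemma~\ref{LEMineq} in your plan would fail: Lemma~\ref{LEMineqbis} has no useful ``second part'' (the self-improving implication $c\le\|h\|_\infty+\epsilon\Rightarrow\|h\|_\infty\le\epsilon/3$ relies on the exact constant $1/4=\max t(1-t)$ and breaks when multiplied by $4e^{1+C}>1$), which is precisely why the paper restructures Step~2 to produce a pure $\kappa$-bound instead of an $\alpha(\cdot)+\epsilon$ bound. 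So: same theorem, same outer skeleton, but a genuinely different organization of the intermediate lemmas --- yours retains the nonfocal proof's two-part structure by normalizing, the paper's rewrites the lemmas to dispense with normalization and with part (ii) entirely. Both are valid.
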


To prove Theorem \ref{ConNFbis}, we will need the following refined version of Lemma \ref{LEMineq}.

\begin{Lem}\label{LEMineqbis}
Let $h: [0,1] \rightarrow [0,\infty)$ be a semiconvex function such that $h(0)=h(1)=0$ and let $c, C > 0$ be fixed. Assume that there are $t_1<\ldots < t_N$ in $(0,1)$ such that $h$ is not differentiable at $t_i$ for $i=1,\ldots, N$, is of class $C^2$ on $(0,1) \setminus \{t_1,\ldots, t_N\}$, and satisfies
\begin{eqnarray}\label{LEMineq1bis}
\ddot{h}(t)\ge -C\vert \dot{h}(t) \vert - c \qquad \forall \,t \in [0,1]
\setminus \bigl\{t_1,\ldots, t_N \bigr\}.
\end{eqnarray}
Then
\begin{eqnarray}\label{LEMineq2bis}
h(t)\le 4  c e^{(1+C)}  t(1-t) \qquad \forall\, t \in [0,1].
\end{eqnarray}
\end{Lem}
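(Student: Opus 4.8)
\textbf{Plan for the proof of Lemma \ref{LEMineqbis}.}

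The plan is to adapt the barrier/maximum-point argument from the proof of Lemma \ref{LEMineq}, the only new feature being the factor $C$ multiplying $|\dot h|$, which forces us to use an exponential rather than a quadratic barrier. First I would reduce, by an approximation argument, to the case where $h$ is $C^2$ everywhere: since $h$ is semiconvex and its nondifferentiability points $t_i$ are downward corners, the inequality \eqref{LEMineq1bis} is preserved (in the sense of distributions, or after a standard mollification that respects one-sided second derivatives) so it suffices to prove \eqref{LEMineq2bis} for smooth $h$ and then pass to the limit; alternatively, one can argue directly as in Lemma \ref{LEMineq}, noting that any interior maximum of the auxiliary function lands at a differentiability point of $h$.

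The core step is to choose the right comparison function. Guided by the structure of \eqref{LEMineq1bis}, I would look for a function $\varphi$ on $[0,1]$ with $\varphi(0)=\varphi(1)=0$, $\varphi>0$ on $(0,1)$, and satisfying the reversed differential inequality $\ddot\varphi(t) < -C|\dot\varphi(t)| - c$ strictly in the interior. A convenient choice is a suitably scaled "tent-like" smooth concave function; concretely, one can take $\varphi(t) = a\,\psi(t)$ where $\psi$ is built from exponentials, e.g. on $[0,1/2]$ set $\psi(t)$ to be a multiple of $(1 - e^{-(1+C)t})$ truncated/symmetrized, or more simply use $\psi(t)= t(1-t)$ but pay the price of the constant: since on $[0,1]$ one has $|\dot\varphi| \le |a|$ and $\ddot\varphi = -2a$ for $\varphi = a\,t(1-t)$, the inequality $-2a < -C a - c$ fails for large $C$, which is exactly why a pure quadratic is not enough and the $e^{(1+C)}$ appears. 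The cleanest route is: fix $a>0$, consider $f(t):=h(t)-a\,t(1-t)$, and at an interior maximum $\bar t$ deduce $|\dot h(\bar t)| = a|2\bar t-1|\le a$ and $0\ge \ddot h(\bar t)+2a \ge -C|\dot h(\bar t)|-c+2a \ge (2-C)a - c$; this already gives $h(t)\le \frac{c}{2-C}t(1-t)$ when $C<2$, but for general $C$ one must instead compare against a function whose second derivative dominates $C$ times its first derivative. I would therefore use the explicit barrier $\varphi(t) = a\big(e^{(1+C)/2} - e^{(1+C)|2t-1|/2}\big)/(1+C)$-type expression — anything for which a direct computation gives $\ddot\varphi \le -C|\dot\varphi| - c$ once $a$ is taken $\ge$ a fixed multiple of $c$, with the multiple bounded by $4e^{(1+C)}$.

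Having fixed such a $\varphi$ with $\varphi \ge h$ forced at any would-be interior maximum of $h-\varphi$ (contradiction), I would conclude $h \le \varphi$ on all of $[0,1]$, hence $h(t)\le \|\varphi\|_\infty \cdot \frac{t(1-t)}{\text{(normalizing factor)}}$; bounding $\varphi$ crudely by $4ce^{(1+C)}t(1-t)$ yields \eqref{LEMineq2bis}. The only genuinely delicate point is the bookkeeping that produces precisely the constant $4e^{(1+C)}$: one needs to check that the barrier can be chosen with $\ddot\varphi + C|\dot\varphi| + c \le 0$ in the interior while keeping $\varphi(t)\le 4ce^{(1+C)}t(1-t)$, which is a one-variable calculus estimate comparing $e^{(1+C)t}-1$ with $(1+C)t$ and with $t(1-t)$ on $[0,1]$. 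I expect the handling of the corner points $t_i$ (via the maximum-point argument, exactly as in Lemma \ref{LEMineq}: $f$ semiconvex $\Rightarrow$ differentiable at its max $\Rightarrow$ $\bar t\neq t_i$) to be routine, and the main obstacle to be the explicit optimization of the barrier so that the stated constant comes out.
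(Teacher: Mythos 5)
Your proposal is correct and follows essentially the same route as the paper: a barrier/maximum-point argument in which an exponential rather than quadratic comparison function absorbs the factor $C\vert\dot h\vert$, with semiconvexity guaranteeing that the maximum of the auxiliary function falls at a differentiability point and hence away from the corners $t_i$ (and from the kink of the barrier at $t=1/2$). The paper's barrier is $\mu\min\{1-e^{-(1+C)t},\,1-e^{-(1+C)(1-t)}\}$ with $\mu = 2ce^{1+C}/(1+C)$, which after factoring out $e^{(1+C)/2}$ on $[0,1/2]$ is exactly the centered exponential you wrote, and the final constant $4e^{1+C}$ then comes from the two elementary bounds $\frac{1-e^{-(1+C)t}}{1+C}\le t$ and $\min\{t,1-t\}\le 2t(1-t)$.
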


\begin{proof}[Proof of Lemma \ref{LEMineqbis}] 
Given $\mu,\lambda >0$, denote by $f_{\mu,\lambda}:[0,1] \rightarrow \R$  the
semiconvex function defined 
$$
f_{\mu,\lambda}(t) := h(t)- \mu \min \left\{ 1-e^{-\lambda t}, 1-e^{-\lambda
(1-t)}\right\} \qquad \forall \,t \in [0,1].
$$
Let $\bar t $ be a maximum point for $f_{\mu,\lambda}$. Since $f_{\mu,\lambda}$
is semiconvex,
it  has to be differentiable at $\bar{t}$, so $\bar{t} \neq 1/2$ and $\bar{t} \neq t_i$ for $i=1, \ldots, N$. If $\bar t \in(0,1/2)$,
then there holds $\dot{f}_{\mu,\lambda} (\bar{t})=0$ and
$\ddot{f}_{\mu,\lambda}(\bar{t}) \le 0$. Then using (\ref{LEMineq1bis}), we get
 
$$
|\dot h(\bar{t}) |= \mu \lambda e^{-\lambda \bar{t}}, 
$$
$$
0 \geq \ddot{f}_{\mu,\lambda}(\bar{t})  =  \ddot{h}(\bar{t}) + \mu \lambda^2
e^{-\lambda \bar{t}}  \ge -C |\dot h (\bar{t}) | -c  +  \mu \lambda^2
e^{-\lambda \bar{t}} \geq  \mu \lambda (\lambda -C ) e^{-\lambda /2} - c.
$$
This yields a contradiction provided we choose $\lambda = 1 + C$ and $\mu = 2c
e^{1+C}/ (1+C)$  and implies that $f_{\mu,\lambda}$ attains its maximum at
$t=0$. Repeating the same argument on $[1/2,1]$, since  $f(0)=f(1)=0$ we infer
that 
$$
h(t) \leq 2c e^{(1+C)}  \min \left\{ \frac{1-e^{- (1+C)t} }{1+C}, \frac{1-e^{-(1+C) (1-t)}}{1+C} \right\}     \qquad \forall \,t \in [0,1].
$$ 
Noting that 
$$
\frac{1-e^{-(1+C)t}}{1+C} \leq t \quad \mbox{ and } \quad \min \{t,1-t\} \leq 2 t(1-t) \qquad \forall \,t \in [0,1],
$$
we get the result.
\end{proof}
We are ready to give the proof of Theorem \ref{ConNFbis}.

\begin{proof}[Proof of Theorem  \ref{ConNFbis}]
Let $x\in M$ and $v_0,v_1 \in \I(x)$ be fixed. We keep the same notation as in Section \ref{secCONV}.

 The following result is a
variant of Lemma \ref{lem5}. 
\begin{Lem}\label{lem5bis}
Let $r>0$ be such that $B_x(r)\cap \I(x)$ is convex for all $x\in M$ and $\bar \beta(r)$ given by the hypothesis in Theorem \ref{ConNFbis}. There exist  $\bar K$, 
such that 
 if 
$$
\sup_{q \in  [ \bar q_t ,q_t]} \Bigl\{ \rho_{y_t} \bigl(q,\I(y_t) \bigr) \Bigr\} \leq \kappa \qquad \forall \,x\in M, \, \forall \,\beta \in(0,\bar{\beta}(r)), \, \forall \,v_0, v_1
\in \I(x)\cap B_x(r+\beta),
$$
then $\I(x)\cap B_x(r+\beta)$ is $(\kappa\bar K)${-radial-semiconvex}. 
\end{Lem}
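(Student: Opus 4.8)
The plan is to run through the argument of Lemma \ref{lem5}(i) almost verbatim, with the four hypotheses of Theorem \ref{ConNFbis} taking over the roles played there by nonfocality together with Lemmas \ref{lem1} and \ref{tenseurine}. First I would fix $x\in M$, $\beta\in(0,\bar\beta(r))$ and $v_0,v_1\in\I(x)\cap B_x(r+\beta)$, and introduce as usual $v_t=(1-t)v_0+tv_1$, $y_t=\exp_x(v_t)$, $\bar q_t=-d_{v_t}\exp_x(v_t)$, and
$$
h(t):=\frac{|v_t|_x^2}{2}-\frac{d(x,y_t)^2}{2}.
$$
Since $\I(x)\cap B_x(r+\beta)$ is open, Lemma \ref{generic} lets me perturb $v_0,v_1$ slightly, keeping them in this set, so that $h$ is semiconvex, of class $C^2$ on $[0,1]$ off a finite set $\{t_1,\dots,t_N\}$, and nondifferentiable at each $t_i$; the perturbed endpoints still satisfy the hypothesis of the lemma, so $\sup_{q\in[\bar q_t,q_t]}\rho_{y_t}(q,\I(y_t))\le\kappa$ for every $t$.

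Next I would produce the second-order differential inequality for $h$. By hypothesis (4), $v_t\in Z(x)$ and $[\bar q_t,q_t]\subset Z(y_t)$, and by hypothesis (3), $Z(y_t)\subset\bar\NF(y_t)$; combined with the genericity just arranged (which makes $q_t$ well defined for $t\notin\{t_i\}$), the hypotheses of Lemma \ref{DERh} are met, so $\dot h(t)=\langle q_t-\bar q_t,\dot y_t\rangle_{y_t}$ and $\ddot h(t)=\frac23\int_0^1(1-s)\,\ov{\S}_{(y_t,q_t^s)}(\dot y_t,q_t-\bar q_t)\,ds$ with $q_t^s:=(1-s)\bar q_t+sq_t\in Z(y_t)$. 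Plugging in the extended Ma--Trudinger--Wang estimate of hypothesis (1) on $Z$, using $|\langle\dot y_t,q_t-\bar q_t\rangle_{y_t}|=|\dot h(t)|$, the bound $\rho_{y_t}(q_t^s,\I(y_t))\le\kappa$, and the compactness estimates $|\dot y_t|_{y_t}\le E|v_1-v_0|_x$ and $|q_t-\bar q_t|_{y_t}\le E$, I obtain
$$
\ddot h(t)\ge -C'|\dot h(t)|-c\qquad\forall\,t\in[0,1]\setminus\{t_1,\dots,t_N\},
$$
with $C'=\tfrac13CE^2|v_1-v_0|_x$ (bounded by a universal constant, since $|v_1-v_0|_x$ is uniformly bounded) and $c=\tfrac13DE^4\kappa|v_1-v_0|_x^2$. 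Lemma \ref{LEMineqbis} then yields $h(t)\le 4c\,e^{1+C'}t(1-t)\le\tfrac43DE^4e^{1+C'}\kappa|v_1-v_0|_x^2\,t(1-t)$; since $v_t\in Z(x)$, hypothesis (2) gives $\rho_x(v_t,\I(x))\le K(|v_t|_x^2-d(x,\exp_xv_t)^2)=2Kh(t)$, hence $\rho_x(v_t,\I(x))\le\bar K\kappa\,\tfrac{t(1-t)}{2}|v_1-v_0|_x^2$ with $\bar K$ universal, and (using that $B_x(r+\beta)$ is convex, so the relevant nearest point of $\I(x)$ still lies in the ball, together with the characterization of radial-semiconvexity in Appendix \ref{App}) this shows $\I(x)\cap B_x(r+\beta)$ is $(\kappa\bar K)$-radial-semiconvex.

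The estimates above are routine once everything is set up, so the real work lies in two bookkeeping points. The first, which I expect to be the main obstacle, is checking that the generic perturbation of Lemma \ref{generic} can be performed without leaving $\I(x)\cap B_x(r+\beta)$ while ensuring that for $t$ off the finite bad set $q_t$ is well defined and $[\bar q_t,q_t]$ lies in $\NF(y_t)$ (not merely in $\bar\NF(y_t)$), so that Lemma \ref{DERh} genuinely applies; this is handled exactly as in the nonfocal case, by observing that every crossing of $\TCL(x)$ by the segment $t\mapsto v_t$ is a nondifferentiability point of $h$ and hence one of the $t_i$, and by invoking the local semiconvexity of $\bar\NF(y_t)$ together with hypotheses (3)--(4). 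The second is verifying that all constants involved ($E$ from the geodesic flow, $C$ and $D$ from hypothesis (1), $K$ from hypothesis (2)) are uniform in $x$, $\beta$, $v_0$, $v_1$ and $\kappa$, so that $\bar K$ depends only on $r$ through the constants provided by the hypotheses of Theorem \ref{ConNFbis}.
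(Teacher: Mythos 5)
Your proposal matches the paper's proof essentially step by step: fix $v_0,v_1\in\I(x)\cap B_x(r+\beta)$ and perturb generically via Lemma \ref{generic}; use hypotheses (3)--(4) of Theorem \ref{ConNFbis} to place $v_t\in Z(x)$ and $[\bar q_t,q_t]\subset Z(y_t)$ so that Lemma \ref{DERh} applies; feed the extended MTW bound of hypothesis (1), the assumed $\kappa$-bound on $\sup_{q\in[\bar q_t,q_t]}\rho_{y_t}(q,\I(y_t))$, and the compactness estimates $|\dot y_t|\le E|v_1-v_0|_x$, $|q_t-\bar q_t|\le E$ into the second-derivative formula; apply Lemma \ref{LEMineqbis}; and finally convert $h(t)$ to $\rho_x(v_t,\I(x))$ via hypothesis (2), yielding $\bar K$ as a universal multiple of $Ke^{1+CE^2}DE^4$. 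The two bookkeeping concerns you flag at the end (keeping the generic perturbation inside $\I(x)\cap B_x(r+\beta)$ with $[\bar q_t,q_t]$ in the open set $\NF(y_t)$ rather than its closure, and uniformity of all constants) are precisely the points the paper itself handles only implicitly, so your awareness of them is appropriate rather than a gap.
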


\begin{proof}[Proof of Lemma \ref{lem5bis}] 
We need to show that, for any $v_0, v_1 \in \I(x)\cap  B_x(r+\beta)$, 
$$
\rho_x \Bigl(v_t,\I(x)\cap B_x(r+\beta)\Bigr) \leq \kappa \bar K \frac{t(1-t)}{2} \bigl|
v_0-v_1\bigr|^2 \qquad \forall \, t \in [0,1].
$$
 As in Lemma \ref{lem5} we set
$$
h(t) := \frac{ |v_t|_x^2}{2} - \frac{d( x, y_t)^2}{2} \qquad \forall \, t \in
[0,1]
$$
with $v_0, v_1 \in \I(x)\cap B_x(r+\beta)$, and up to slightly
perturbing $v_0,v_1$\
we may assume that $h:[0,1] \rightarrow \R$ is semiconvex,  $C^2$ outside a
finite set of
times $0<t_1< \ldots < t_N<1$, and not differentiable at $t_i$ for $i=1, \ldots,
N$. Moreover properties (1) and (3)-(4) in Theorem  \ref{ConNFbis} yield 
$$
\ddot{h}(t)\ge -C| \dot{h}(t)| |\dot {y_t}|_{y_t}  |{q_t-\bar q_t}|_{y_t} -D
\max_{q\in [ \bar q_t ,q_t]} \Bigl\{ \rho_{y_t}(q,\I(y_t))\Bigr\}   |\dot
{y_t}|_{y_t}^2| {q_t-\bar q_t}|_{y_t}^2,
$$
for every $t\in [0,1] \setminus \{t_1, \ldots, t_N\}$. Since  by compactness of $M$, there is a uniform constant $E>0$ such that
$$
\bigl| \dot{y}_t \bigr|_{y_t} \leq E \bigl| v_0-v_1 \bigr|_x \quad \mbox{ and }
\quad  \bigl|q_t-\bar{q}_t \bigr|_{y_t} \leq E,
$$
we get 
\begin{eqnarray*}
\ddot{h}(t)\ge -CE^2 | \dot{h}(t)| \bigl|v_1-v_0\bigr|_x - D E^4 \kappa
\bigl|v_1-v_0\bigr|_x^2 \qquad \forall \, t\in [0,1] \setminus \bigl\{t_1, \ldots, t_N\bigr\}.
\end{eqnarray*}
Thus Lemma  \ref{LEMineqbis} gives 
$$
h(t) \leq 4e^{(1+CE^2)} D E^4 \kappa \, t(1-t)|v_1-v_0|_x^2 \qquad \forall \, t
\in [0,1],
$$
and so by property (2) in Theorem  \ref{ConNFbis} we get that $\I(x)\cap B(r)$
is $(\kappa \bar K)${-radial-semiconvex} with $\bar K = 2K4e^{(1+CE^2)} D E^4$.
\end{proof}

We are ready to apply our bootstrap arguments.  We recall that the property $\PP(r)$ is satisfied if for any $x \in M$ the set $B_x(r)\cap \I(x)$ is convex. As before, in order to conclude the proof of Theorem  \ref{ConNFbis} we just need to prove the following result.
 
\begin{Lem}{\label{openbis}}
The set of $r$ for which $\PP(r) $ holds is open in $[0,\infty)$.
\end{Lem}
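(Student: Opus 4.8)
The plan is to reproduce, with only the obvious modifications, the two‑step bootstrap used in the proof of Lemma~\ref{open}, feeding in the $r$‑dependent data $(\bar\beta(r),Z)$ and properties (1)--(4) of Theorem~\ref{ConNFbis} in place of the fixed tube $\I^{\bar\mu}$ and the global bound on $\ov{\S}$ available in the nonfocal setting. So fix $r$ with $\PP(r)$ true and let $\bar\beta(r)>0$, the compact radial set $Z\subset TM$, and the constants $C,D,K$ be as provided by the hypothesis of Theorem~\ref{ConNFbis}. Exactly as in the discussion preceding Lemma~\ref{open}, the set $J$ of admissible $r$ is a closed interval containing $[0,r_0]$, so it suffices to show that $\PP(r+\beta)$ holds for every sufficiently small $\beta>0$; the two steps below, combined and iterated along a geometric series, will deliver this.

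\emph{Step~1.} First I would show that $\I(x)\cap B_x(r+\beta)$ is $(\bar K\beta)$-radial-semiconvex for every small $\beta\in(0,\bar\beta(r))$. The input is the same purely geometric argument as in Step~1 of the proof of Lemma~\ref{open}: given $v_0,v_1\in B_x(r+\beta)\cap\I(x)$ with $|v_0-v_1|_x$ small, Lemma~\ref{Lipcontrol} (whose proof uses only Lemma~\ref{lem2} and Lipschitz regularity of $t_{cut}$, hence needs no nonfocality) places $v_t$ in $\I^{K|v_0-v_1|_x}(x)$ and $\bar q_t$ in $\I^{K|v_0-v_1|_x}(y_t)$; then, using that $\I(y_t)$ is star-shaped, that $|q_t|_{y_t},|\bar q_t|_{y_t}<r+\beta$, and the convexity $\PP(r)$, one projects radially onto $\bar B_{y_t}(r)\cap\bar{\I}(y_t)$ and concludes, word for word, that $\sup_{q\in[\bar q_t,q_t]}\rho_{y_t}(q,\I(y_t))\le\beta+K|v_0-v_1|_x$. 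Properties (3)--(4) guarantee $[q_t,\bar q_t]\subset Z(y_t)\subset\bar\NF(y_t)$, so Lemma~\ref{lem5bis} applies and turns this estimate into $(\bar K\beta)$-radial-semiconvexity (after sending $|v_0-v_1|_x\downarrow0$).

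\emph{Step~2 (bootstrap).} Next I would prove the self-improvement: there is $\beta_0\in(0,\bar\beta(r))$, \emph{independent of $A$}, such that if all the sets $\I(x)\cap B_x(r+\beta)$ ($\beta<\beta_0$) are $A$-radial-semiconvex, then they are $(A/2)$-radial-semiconvex. I would follow Step~2 of the proof of Lemma~\ref{open} verbatim in structure. Decomposing $\rho_{y_t}(q^s_t,\I(y_t))$ through the auxiliary point $\tilde q^s_t=[0,q^s_t]\cap[q_t,q'_t]$ and using property~(2) one gets $\max_{q\in[\bar q_t,q_t]}\rho_{y_t}(q,\I(y_t))\le K^2 h(t)+\max_{\hat q\in[q_t,q'_t]}\rho_{y_t}(\hat q,\I(y_t))$ with $h(t):=\tfrac12|v_t|_x^2-\tfrac12 d(x,y_t)^2$. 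One then splits into the case where $|q_t-q'_t|_{y_t}^2$ is below a fixed threshold (so that $A$-radial-semiconvexity bounds the last term by $O(A)\cdot|q_t-q'_t|_{y_t}^2$, of the right size) and the case where it is above it; in the latter case $q_t,q'_t\in B_{y_t}(r+\beta)$ are radially far apart, so the segment $[q_t,q'_t]$ crosses $\bar B_{y_t}(r)$, entering and leaving within parameter intervals of length $O(\sqrt\beta)$, whence $\PP(r)$ controls the middle sub-interval while $A$-radial-semiconvexity on the two $O(\sqrt\beta)$-long ends contributes only $O(\sqrt\beta\,A)$, again of the right size once $\beta_0$ is small (independently of $A$). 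In both cases, writing $\epsilon_\beta$ for the small factor so produced (with $\epsilon_\beta\to0$ as $\beta\to0$, uniformly in $A$), one feeds the bound $\max_{q\in[\bar q_t,q_t]}\rho_{y_t}(q,\I(y_t))\le K^2 h(t)+\epsilon_\beta A$ into the second-derivative estimate coming from property~(1) and Lemma~\ref{DERh} — legitimate precisely because (3)--(4) keep $\bar q_t$, $q_t$ and the segments joining them inside $Z\subset\bar\NF$ — obtaining
\[
\ddot h(t)\ge -CE^2|\dot h(t)|\,|v_1-v_0|_x-DE^4K^2\,h(t)\,|v_1-v_0|_x^2-DE^4\epsilon_\beta A\,|v_1-v_0|_x^2 .
\]
Taking $|v_0-v_1|_x$ small makes the $|\dot h|$-coefficient $\le1$ and absorbs the $h(t)$-term into $\|h\|_\infty$; Lemma~\ref{LEMineqbis} together with the self-improving second part of Lemma~\ref{LEMineq} then gives $\|h\|_\infty\le\tfrac{A}{2K^*}\cdot\tfrac{t(1-t)}{2}|v_1-v_0|_x^2$ after adjusting $\beta_0$, which via property~(2) and Proposition~\ref{equivalence} is exactly the $(A/2)$-radial-semiconvexity of $\I(x)\cap B_x(r+\beta)$.

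Combining Steps~1 and~2 and iterating, $\I(x)\cap B_x(r+\beta)$ is $(\bar K\beta/2^k)$-radial-semiconvex for every $k\in\N$, hence convex, so $\PP(r+\beta)$ holds and $J$ is open; this completes the proof of Theorem~\ref{ConNFbis}. I expect the main obstacle to be Step~2: getting the $O(\sqrt\beta)$ control on how the segment $[q_t,q'_t]$ meets the sphere $\partial B_{y_t}(r)$ to be uniform in $x$ and, above all, independent of $A$ (otherwise the geometric-series iteration does not close), and — throughout both steps — verifying that $v_t$, $q_t$, $\bar q_t$, the point $q'_t$ and the segments joining them never leave $Z$, so that properties (1)--(2) and Lemma~\ref{DERh} may legitimately be invoked. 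This last point is what hypotheses (3)--(4) are designed to supply, but it has to be bookkept with care, since $Z$ — unlike the explicit neighbourhood $\I^{\bar\mu}$ of the nonfocal case — is only constrained to lie between $\I\cap B_x(r+\beta)$ and $\bar\NF(x)$.
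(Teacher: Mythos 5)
Your plan reproduces the proof of Lemma~\ref{open} (the nonfocal case) essentially verbatim, with the $r$-dependent data $(\bar\beta(r),Z)$ in place of $\I^{\bar\mu}$. The paper's actual proof of Lemma~\ref{openbis} takes a noticeably different and simpler route in Step~2, and the difference matters because it determines which version of the key lemma one needs.

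In Step~2, you work in $T_{y_t}M$, decompose $\rho_{y_t}(q_t^s,\I(y_t))$ through the auxiliary point $\tilde q_t^s$, retain the $K^2h(t)$-term, and plan to close via a self-improving $\|h\|_\infty$-absorption --- that is, you implicitly rebuild an analogue of Lemma~\ref{lem5}(ii). But the paper proves only Lemma~\ref{lem5bis}, the flat $\kappa$-version, in the Theorem~\ref{ConNFbis} setting; it does not prove the ``$\min\{\alpha h+\epsilon,\dots\}$'' variant. Instead, the paper's Step~2 runs the $t_a,t_b$-split \emph{in $T_xM$, on the curve $v_t$}, to obtain the uniform-in-$t$ bound $\rho_x(v_t,\I(x))\le AE\tilde K\sqrt\beta$, transfers it to $T_{y_t}M$ with Lemma~\ref{lem2}, runs the same argument once more on $[q_t',q_t]$, and lands directly on $\sup_{q\in[\bar q_t,q_t]}\rho_{y_t}(q,\I(y_t))\le (K+1)AE\tilde K\sqrt\beta \le A/(2\bar K)$ for $\beta$ small. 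This flat bound is exactly what Lemma~\ref{lem5bis} digests, so no $h(t)$-dependent differential inequality and no self-improving absorption is needed. Your route thus requires an unstated lemma, namely a Lemma~\ref{lem5}(ii)-type statement in the $\ref{ConNFbis}$ setting.

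Moreover the specific way you propose to close the gap does not work as a black box. You write ``Lemma~\ref{LEMineqbis} together with the self-improving second part of Lemma~\ref{LEMineq}.'' Lemma~\ref{LEMineq}'s self-improvement hinges on the sharp constant: $h(t)\le ct(1-t)$ gives $\|h\|_\infty\le c/4$, and $c\le\|h\|_\infty+\epsilon$ then yields $\|h\|_\infty\le\epsilon/3$. But Lemma~\ref{LEMineqbis} only gives $h(t)\le 4c\,e^{1+C}t(1-t)$, so $\|h\|_\infty\le c\,e^{1+C}$; with $c=\|h\|_\infty+\epsilon$ the factor $e^{1+C}>1$ makes the inequality vacuous. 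What does survive is a \emph{different} absorption: your $c$ is of the form $\alpha'\|h\|_\infty+c'$ with $\alpha'\sim DE^4K^2|v_1-v_0|_x^2$, so for $|v_1-v_0|_x$ small one has $\alpha'e^{1+C'}<1$ and can solve for $\|h\|_\infty$. That is the correct fix, but it is not Lemma~\ref{LEMineq}'s second part, and it should be written out explicitly (effectively proving a ``Lemma~\ref{lem5bis}(ii)''). Alternatively, and more economically, one can switch to the paper's Step~2, which dispenses with all of this. Your Step~1 (via Lemma~\ref{Lipcontrol} and $\nu\downarrow 0$) is fine, though the paper obtains the same $\kappa=(K+1)\beta$-bound more directly from $\PP(r)$ without the limiting argument.
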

 
\begin{proof}[Proof of Lemma \ref{openbis}]
Assume that $\PP(r)$ holds. The proof is divided in two steps: first we show that there are $\beta_0, K>0$ such that, for any $\beta \in (0,\beta_0)$,
the sets $B_x(r+\beta)\cap \I(x)$ are $( (K+1) \bar K \beta)${-radial-semiconvex} for any
$x\in M$. Then in Step $2$
we show the following "bootstrap-type" result: if the sets  $B_x(r+\beta)\cap
\I(x)$ are $A${-radial-semiconvex} for all $x\in M$, then they are indeed
$(A/2)$-{radial-semiconvex}. As before the combination of Steps 1 and 2 proves the convexity of the $B_x(r+\beta)\cap
\I(x)$.

\subsection*{{ Step $1$:}} \textit{$\I(x)\cap B_x(r+\beta)$ is $((K+1) \bar K\beta)$-{radial-semiconvex} for any $\beta \in (0,\beta_0$).}

Fix $x \in M$ and $\beta \in (0,\bar{\beta}(r))$. Since $B_x(r+\beta)\cap
\I(x)$ is starshaped we can find $v_0^r,v_1^r \in \I(x)\cap B(r) $ with, for $i=(0,1)$, $\rho_x(v_i,v_i^r)\leq \beta$. Thus $\PP(r)$ implies that $\rho_x(v_t,I(x))\leq \beta$ for all $t\in [0,1]$, that is $v_t \in \I^{\beta}(x)$, and it follows from Lemma \ref{lem2} that 
$\bar q_t \in \I^{K \beta}(y_t)$. By construction we also have
$$
|\bar q_t|_{y_t}=|v_t|_{x} < r+ \beta, \quad | q_t |_{y_t} \le |v_t|_{x} <  r+
\beta, \quad q_t \in \I(y_t).
$$
Since $\bar q_t \in \I^{ K \beta}(y_t)$ we can find  ${q'_t}\in
\bar{\I(y_t)}\cap B_{y_t}(r+\beta) $ such that 
$$
\rho_{y_t} \bigl(\bar q_t, \I(y_t)\bigr)=|\bar q_t-{q'_t}| \le K \beta.
$$
Moreover, using that $\I(y_t)$ is starshaped and that $q_t, q'_t \in
B_{y_t}(r+\beta)$, we can find $ q^r_t$, $ {q'}^r_t \in  \bar{B}_{y_t}(r)\cap
\bar{\I(y_t)}$
such that $\rho_{y_t} ( q_t, q_t^r) \le \beta$ and  $\rho_{y_t}( {q'_t},
{q'}^r_t) \le \beta$. Again $\PP(r)$ implies that
$[  q^r_t, {q'}^r_t]\subset \bar{\I(y_t)}$, so (see Figure $1$)
\begin{eqnarray*}\label{parhasard}
\max_{q \in [\bar q_t,q_t]} \Bigl\{ \rho_{y_t} \bigl(q, \I(y_t)\bigr) \Bigr\} &
 \leq & \max_{q \in [\bar q_t,q_t]} \Bigl\{  \rho_{y_t} \bigl(q,[  q^r_t,
{q'}^r_t]\bigr)\Bigr\} \\
& = & \max \Bigl\{   \rho_{y_t} \bigl({q}_t, q^r_t\bigr),  \rho_{y_t} \bigl(\bar
q_t, {q'}^r_t\bigr) \Bigr\} \\
& \leq &  \beta +  K \beta,
\end{eqnarray*}
where at the second line we used that the maximum is attained at one of the
extrema of the segment.
Thus, Lemma \ref{lem5bis} implies that $B_x(r+\beta)\cap \I(x)$ is
$((K+1) \bar K \beta )$-{radial-semiconvex} for any $\beta \in$ $]0,\bar{\beta}(r)]$.
\subsection*{{ Step $2$:}} \textit{If all $\I(x)\cap B_x(r+\beta)$ are $A$-{radial-semiconvex}, then they are $(A/2)$-{radial-semiconvex}.}

Let $v_0, v_1 \in \I(x)\cap B_x(r+\beta)$, as before we define in the plane generated by $0, v_0, v_1$ in $T_{x}M$ the curve
$\gamma : [0,1] \rightarrow \I(x)$ by (see Figure $2$)
$$
\gamma (t) =  w \quad \mbox{ where } \quad \rho_{x} \bigl( v_t, \I(x)\bigr) = |
v_t-w|_{x}  \qquad \forall \,t \in [0,1],
$$
and denote by $a=\gamma(t_a)$ the first point  of $ \gamma$ which enters $\bar{B}_{x}(r)$ and  $b=\gamma( t_b)$ the last one. Since both $v_0, v_1$ belong to $B_{x}(r+\beta)$ and $B_{x}(r)\cap \I(x)$ is
convex, the intersection of the segment $[v_0, v_1]$ with $B_{x}(r)$ is a
segment $[Q_1, Q_2]$ such that
$$
\bigl|Q_1 - v_0\bigr|,  \, \bigl|Q_2 - v_1\bigr|_{x} \leq \tilde{K} 
\sqrt{\beta},
$$
for some uniform constant $\tilde{K}>0$ and $\beta>0$ small enough. Since
$$
\bigl| v_{t_a}-v_0 \bigr|_{x} \leq \bigl|Q_1 - v_0\bigr|_{x} \quad \mbox{ and }
\quad \bigl| v_{t_b}-v_1 \bigr|_{x} \leq \bigl|Q_2 - v_1\bigr|_{x},
$$
both $t_a$ and $1-t_b$ are bounded by
$\frac{\tilde{K} \sqrt{\beta}}{ |v_0 -v_1 |_{x}} $.
Let us distinguish two cases:\\
- On $[t_a,t_b]$, $\PP(r)$ is true so $[a,b] \subset \bar{\I(x)} $. Then
$$
\sup_{v \in [v_{t_a}, v_{t_b}]} \Bigl\{ \rho_{x} \bigl(v, \I(x)\bigr) \Bigr\}
\leq  \max \Bigl\{ \rho_{x} \bigl(v_{t_a}, \I(x)\bigr), \rho_{x} \bigl(v_{t_b},
\I(x)\bigr) \Bigr\}.
$$
- On $[0,t_a]$ (similarly on $[1-t_b,1]$),  $\bar{B}_{x}(r+\beta)\cap
\bar{\I(x)}$ is $A$-{radial-semiconvex}, so
$$
\rho_{x} \bigl(v_t, \I(x)\bigr) \leq A  \frac{t(1-t)}{2}|v_1-v_0 
|^2_{x}\le AE\tilde{K} \sqrt{\beta}.
$$
Combining these two estimates we get, for all $t\in [0,1]$,
$$
\rho_{x} \bigl(v_t, \I(x)\bigr) \leq A  \frac{t(1-t)}{2}|v_1-v_0 
|^2_{x}\le AE\tilde{K} \sqrt{\beta}.
$$
Then we define as above ${q}_t'$ such that $\rho_{y_t} \bigl(\bar q_t,
\I(y_t)\bigr)=|\bar q_t-{q}_t'|$. By Lemma \ref{lem2} we get
\begin{eqnarray*}
\max_{q \in [\bar q_t,q_t]} \Bigl\{ \rho_{y_t} \bigl(q, \I(y_t)\bigr) \Bigr\} &
 \leq &\rho_{y_t} \bigl(\bar q_t,
\I(y_t)\bigr)+ \max_{\hat{q}\in [{q}_t',q_t]} \Bigl\{  \rho_{y_t}
\bigl(\hat{q},\I(y_t) \bigr)\Bigr\} \\
& \leq & KAE\tilde{K} \sqrt{\beta} + \max_{\hat{q} \in
[q_t',q_t]} \Bigl\{ \rho_{y_t} \bigl(\hat{q},\I(y_t)\bigr) \Bigr\}.
\end{eqnarray*}
Since $\bar{B}_{x}(r+\beta)\cap \bar{\I(x)}$ is $A$-{radial-semiconvex} for every $x \in M$,
the same argument used above for $[v_0,v_1]$ is also valid on each segment $ [q_t',q_t]$, hence
$$
\quad \max_{q\in [\bar{q}_t,q_t]} \Bigl\{ \rho_{y_t} \bigl(q, \I(y_t) \bigr)
\Bigr\} \leq KAE\tilde{K} \sqrt{\beta} + AE\tilde{K} \sqrt{\beta}
$$
Therefore, if we choose $\bar{\beta}(r)>0$ sufficiently small  we get
$$
\sup_{q \in [q_t,\bar{q}_t]} \left\{ \rho_{y_t} \bigl(q, \I(y_t) \bigr) \right\}
 \leq \frac{A}{2\bar K},
$$ 
and in turn, by Lemma \ref{lem5bis},
$$
\rho_x\bigl( v_t, \I(x)\bigr) \leq \frac{A}{2} \frac{t(1-t)}{2} \bigl| v_0-v_1\bigr|_x^2 \qquad \forall \,t \in [0,1],
$$
which proves the $(A/2)$-{radial-semiconvex}.
\end{proof}
The proof of Lemma \ref{openbis} concludes the proof of Theorem \ref{ConNFbis}.
\end{proof}

We leave the reader to check that if $(M,g)$ is nonfocal, then the properties in Theorem  \ref{ConNFbis} are satisfied (take $Z= \I^{\bar \mu}$ which was defined in Section \ref{secCONV}). As a consequence, Theorem \ref{ConNF} can be seen as a corollary of Theorem \ref{ConNFbis}. 

\section{Conclusion and perspectives}
\label{sect:conclusion}

We can develop our proof further to cover all the results obtained in \cite{lv10}, namely modifying just a bit Lemma \ref{LEMineqbis} we can prove that  {\bf (MTW$(\kappa_0,\infty)$)} for $\kappa_0>0$ gives $\kappa$-uniform convexity for some $\kappa >0$. For a definition of $\kappa$ uniform convexity we refer to \cite{lv10} or Appendix A.


\begin{Lem}\label{LEMineqbism}[Modified lemma]
Let $h: [0,1] \rightarrow [0,\infty)$ be a semiconvex function such that $h(0)=h(1)=0$ and let $c, C > 0$ be two fixed constants.
Assume that there are $t_1<\ldots < t_N$ in $(0,1)$ such that $h$ is not differentiable at $t_i$ for $i=1,\ldots, N$, is of class $C^2$ on $(0,1) \setminus \{t_1,\ldots, t_N\}$, and satisfies
\begin{eqnarray}\label{LEMineq1bism}
\ddot{h}(t)\ge  -C\vert \dot{h}(t) \vert + c \qquad \forall \,t \in [0,1]
\setminus \bigl\{t_1,\ldots, t_N \bigr\}.
\end{eqnarray}
Then
\begin{eqnarray}\label{LEMineq2bism}
h(t)\le- 4  c e^{(1+C)}  t(1-t) \qquad \forall\, t \in [0,1].
\end{eqnarray}
\end{Lem}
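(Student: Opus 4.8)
The plan is to imitate the proof of Lemma~\ref{LEMineqbis} almost verbatim, the only real change being the bookkeeping of signs. Set $\lambda:=1+C$ and, for a parameter $\mu>0$, introduce the same family of semiconvex test functions
$$
f_\mu(t):=h(t)-\mu\min\left\{1-e^{-\lambda t},\,1-e^{-\lambda(1-t)}\right\},\qquad t\in[0,1].
$$
First I would record that $f_\mu(0)=f_\mu(1)=0$ and that, $f_\mu$ being semiconvex, it is differentiable at any point where it attains its maximum; such a point is therefore different from $1/2$ (where $-\mu\min\{\cdots\}$ has a convex corner) and from every $t_i$, so near it $h$ is of class $C^2$.

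Next I would exclude an interior maximum. If $\bar t\in(0,1/2)$ were one, then $\dot f_\mu(\bar t)=0$ forces $\vert\dot h(\bar t)\vert=\mu\lambda e^{-\lambda\bar t}$, while $\ddot f_\mu(\bar t)\le 0$ together with \eqref{LEMineq1bism} gives
$$
0\ge\ddot f_\mu(\bar t)=\ddot h(\bar t)+\mu\lambda^2e^{-\lambda\bar t}\ge-C\vert\dot h(\bar t)\vert+c+\mu\lambda^2e^{-\lambda\bar t}=\mu\lambda(\lambda-C)e^{-\lambda\bar t}+c.
$$
Since $\lambda-C=1>0$ and $c>0$, the right-hand side is strictly positive, a contradiction — and, in contrast with Lemma~\ref{LEMineqbis}, this works for \emph{every} $\mu>0$, with no lower bound on $\mu$ needed, precisely because the sign in front of $c$ has been reversed. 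The symmetric computation rules out an interior maximum on $(1/2,1)$ as well, and the corner point $t=1/2$ is excluded as above.

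Consequently $f_\mu$ attains its maximum at an endpoint, so $f_\mu\le 0$ on $[0,1]$, i.e.
$$
h(t)\le\mu\min\left\{1-e^{-\lambda t},\,1-e^{-\lambda(1-t)}\right\}\qquad\forall\,t\in[0,1],\ \forall\,\mu>0.
$$
Letting $\mu\downarrow 0$ yields $h\le 0$ on $[0,1]$, hence $h\equiv 0$ since $h\ge 0$; feeding this back into \eqref{LEMineq1bism} then forces $c\le 0$. Thus the hypotheses are jointly realizable only trivially, so \eqref{LEMineq2bism} holds, and — what is actually exploited in the sequel — a nonnegative function vanishing at both endpoints cannot obey the reversed differential inequality \eqref{LEMineq1bism} with $c>0$; this is exactly the mechanism that promotes convexity to $\kappa$-uniform convexity under {\bf (MTW$(\kappa_0,\infty)$)}. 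I expect the only delicate point, exactly as in Lemma~\ref{LEMineqbis}, to be the handling of $t=1/2$ together with the finitely many non-smooth times $t_1<\dots<t_N$, which is dispatched by semiconvexity and the explicit form of the corner of $-\mu\min\{\cdots\}$.
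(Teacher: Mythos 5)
Your computation is internally sound and faithfully reproduces the barrier scheme of Lemma \ref{LEMineqbis}: with $f_\mu(t)=h(t)-\mu\min\{1-e^{-\lambda t},1-e^{-\lambda(1-t)}\}$ and $\lambda=1+C$, the interior-maximum inequality becomes $0\ge\mu\lambda(\lambda-C)e^{-\lambda\bar t}+c>0$ for \emph{every} $\mu>0$, whence $h\le0$, hence $h\equiv0$ (as $h\ge0$), hence $c\le0$, contradicting $c>0$. So you have correctly shown that the hypotheses as printed are unsatisfiable and that \eqref{LEMineq2bism} holds vacuously. As a verification of the literal statement this is unimpeachable, and it rightly exposes that the statement is degenerate (the conclusion $h(t)\le-4ce^{1+C}t(1-t)<0$ on $(0,1)$ is already incompatible with $h\ge0$).

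The genuine problem is your closing claim that this vacuity ``is exactly the mechanism'' behind Theorem \ref{ConNFm}. A vacuously true lemma cannot be applied: in the proof of Theorem \ref{ConNFm} the hypotheses \eqref{LEMineq1bism} are supposed to be \emph{satisfied} by the function $h$ built from $v_0,v_1\in\partial\I(x)$, and the conclusion is supposed to deliver a quantitative strictly negative upper bound, i.e.\ uniform convexity. The usable version of the lemma must therefore drop the constraint $h\ge0$ ($h$ is to be read as a signed quantity, in the spirit of the signed distance of Appendix \ref{App}), and then your argument no longer applies: one must flip the sign of the barrier. With $f_\mu(t)=h(t)+\mu\min\{1-e^{-\lambda t},1-e^{-\lambda(1-t)}\}$ the interior-maximum inequality reads $0\ge c-\mu\lambda(\lambda+C)e^{-\lambda\bar t}$, which is contradictory only for $\mu$ \emph{small} (bounded by $c/(\lambda(\lambda+C))$), not for all $\mu$; even more simply, running Lemma \ref{LEMineq} with the smooth barrier $f(t)=h(t)+at(1-t)$ gives a contradiction at an interior maximum exactly when $a(C+2)<c$, yielding $h(t)\le-\frac{c}{C+2}\,t(1-t)$. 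In particular the printed constant $4ce^{1+C}$ cannot be correct for the nonvacuous version: for $C$ small the inequality $\ddot h\ge c$, $h(0)=h(1)=0$ is saturated by $h(t)=-\frac{c}{2}t(1-t)$, which satisfies \eqref{LEMineq1bism} but violates $h(t)\le-4ce^{1+C}t(1-t)$. So the statement is a misprint, and your proof verifies the misprint rather than the lemma the paper actually needs; the right move is to flag this and prove the corrected, signed version.
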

It leads to the following theorem.
 \begin{Thm}{\label{ConNFm}}
Let $(M,g)$ be a nonfocal Riemannian manifold satisfying {\bf (MTW$(\kappa_0,\infty)$)}, with $\kappa_0>0$. Then there exists $\kappa>0$ such that all injectivity domains of $M$ are $\kappa$ uniformly convex.
\end{Thm}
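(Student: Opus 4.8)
The plan is to rerun the proof scheme of Theorems~\ref{ConNF}--\ref{ConNFbis}, simply feeding the \emph{strict} lower bound coming from {\bf (MTW$(\kappa_0,\infty)$)} into the sign--reversed comparison Lemma~\ref{LEMineqbism} in place of Lemma~\ref{LEMineqbis}; it is exactly the former that turns strict positivity of the Ma--Trudinger--Wang tensor into a quadratic convexity gain. First, since {\bf (MTW$(\kappa_0,\infty)$)} implies \MTWb, Theorem~\ref{ConNF} already gives that every injectivity domain $\I(x)$ is convex, so only the upgrade to uniform convexity remains. By the characterization of $\kappa$--uniform convexity recalled in Appendix~\ref{App} and by compactness of $M$ (which makes every constant below uniform and allows $\ell:=|v_0-v_1|_x$ to be taken as small as we wish), it suffices to find $\kappa>0$ such that, with $v_t:=(1-t)v_0+tv_1$,
\[
\rho_x\bigl(v_t,\TCL(x)\bigr)\ \ge\ \kappa\,t(1-t)\,\ell^2\,|v_t|_x
\qquad\forall\,x\in M,\ \forall\,v_0,v_1\in\TCL(x),\ \forall\,t\in[0,1].
\]
Arguing exactly as in Lemma~\ref{tenseurine} (using that $\ov{\S}$ and $\S$ coincide on $\{v\in\I(x)\}$, together with a compactness argument in the spirit of \cite[Lemma~2.3]{lv10}), {\bf (MTW$(\kappa_0,\infty)$)} produces $\bar\mu>0$ with $\I^{\bar\mu}(x)\cap\TFL(x)=\emptyset$ and constants $C\ge0$, $c_1>0$ such that $\ov{\S}_{(x,v)}(\xi,\eta)\ge -C\,|\<\xi,\eta\>_x|\,|\xi|_x|\eta|_x+c_1|\xi|_x^2|\eta|_x^2$ for all $v\in\I^{\bar\mu}(x)$ and $\xi,\eta\in T_xM$.

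Next I would argue by contradiction. Fix $x$ and $v_0,v_1\in\TCL(x)$, and suppose the displayed inequality fails at some $t_*\in(0,1)$ for a small universal $\kappa>0$ to be chosen below. Set $\lambda(t):=\kappa\,t(1-t)\,\ell^2$ and consider the outward--bulged curve $c(t):=(1+\lambda(t))v_t$, which joins $v_0$ to $v_1$, lies in $\I^{\bar\mu}(x)$ for $\kappa,\ell$ small, and (by the failure of the inequality at $t_*$, together with convexity of $\I(x)$) lies strictly beyond $\TCL(x)$ at $t=t_*$. After a Lemma~\ref{generic}--type perturbation of $v_0,v_1$, the semiconvex function
\[
h(t)\ :=\ \frac12\,|c(t)|_x^2\ -\ \frac12\,d\bigl(x,\exp_x c(t)\bigr)^2
\]
is of class $C^2$ outside finitely many points, is nonnegative, vanishes at $t=0,1$ (since $c(0),c(1)\in\TCL(x)$), and satisfies $h(t_*)>0$. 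Let $(a,b)\ni t_*$ be the connected component of $\{h>0\}$; then $c(t)$ lies beyond $\TCL(x)$ for every $t\in(a,b)$, so for such $t$ there exist two distinct geodesics from $x$ to $y_t:=\exp_x c(t)$, the radial one with terminal velocity $\bar q_t:=-d_{c(t)}\exp_x(c(t))$ and a minimizing one with velocity $q_t\in\I(y_t)$, and — as in Lemmas~\ref{lem2} and~\ref{Lipcontrol} — $[q_t,\bar q_t]\subset\I^{\bar\mu}(y_t)\subset\NF(y_t)$.

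Here nonfocality is used in an essential way. By the quantitative transversality estimate \eqref{Delta1} (see \cite[Proposition~C.5]{lv10}) and continuity, there is a uniform $\Delta>0$ with $|c(t)|_x^2-\<\bar q_t,q_t\>_{y_t}\ge\Delta$ on $(a,b)$, hence, since $|\bar q_t|_{y_t}=|c(t)|_x$ and $|q_t|_{y_t}=d(x,y_t)$,
\[
|q_t-\bar q_t|_{y_t}^2\ =\ d(x,y_t)^2+|c(t)|_x^2-2\<q_t,\bar q_t\>_{y_t}\ \ge\ 2\Delta-2h(t)\ \ge\ \Delta
\]
on $(a,b)$, after shrinking $\bar\mu$ if necessary. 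Applying the curve version of Lemma~\ref{DERh} along $t\mapsto c(t)$ — formula \eqref{ddoth} picks up only one additional term, $\<\nabla_v[\frac12|v|_x^2-\frac12 d(x,\exp_xv)^2](c(t)),\ddot c(t)\>$, which is $O(\kappa\ell^2)$ because $\ddot c=O(\kappa\ell^2)$ and that gradient is uniformly bounded — together with the extended strict bound above, the identity $\dot h(t)=\<q_t-\bar q_t,\dot y_t\>_{y_t}$, and $|\dot y_t|_{y_t}\asymp\ell$, one obtains
\[
\ddot h(t)\ \ge\ -C'\,|\dot h(t)|\ +\ (c_2-C_3\kappa)\,\ell^2
\qquad\text{on }(a,b)\text{ off finitely many points,}
\]
with $C',c_2,C_3>0$ universal (in particular independent of $\kappa$). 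Choosing $\kappa\le c_2/(2C_3)$ and rescaling $[a,b]$ to $[0,1]$, Lemma~\ref{LEMineqbism} forces $h<0$ on $(a,b)$, contradicting $h\ge0$. Hence no such $t_*$ exists, which is the sought radial--margin bound, and the theorem follows.

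I expect the main obstacle to be the transversality input: one must know that the gap $|q_t-\bar q_t|_{y_t}$ between the two terminal velocities stays bounded \emph{below by a positive constant} as long as $c(t)$ lies outside $\bar{\I}(x)$, and this fails precisely at conjugate cut points — which is why the nonfocality hypothesis cannot be removed with this method. The rest is bookkeeping already present in Section~\ref{secCONV}: keeping the bulged curve inside the region $\I^{\bar\mu}$ where the extended strict Ma--Trudinger--Wang bound and Lemmas~\ref{lem1},~\ref{lem2},~\ref{DERh} are all valid; controlling the lower--order terms produced by the curvature of $t\mapsto c(t)$ (absorbed by taking $\kappa$ small, uniformly in $x,v_0,v_1$); performing the Lemma~\ref{generic}--type perturbation so that $h$ is piecewise $C^2$; and checking that a radial--margin lower bound is equivalent to $\kappa$--uniform convexity in the sense of Appendix~\ref{App}, using that $\TCL(x)$ is a uniformly Lipschitz radial graph.
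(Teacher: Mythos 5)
Your proposal is correct and uses exactly the ingredients the paper's (deliberately terse) sketch names: convexity from Theorem~\ref{ConNF}, the derivative formulas of Lemma~\ref{DERh}, the Loeper--Villani self-improvement \MTWKC$(\kappa_0,\infty)\Rightarrow$\MTWKC$(\kappa_0,C)$, and the sign-reversed comparison Lemma~\ref{LEMineqbism}. Where you go beyond the paper's text is in explaining how to obtain a nondegenerate function to which these lemmas can be applied: with $v_0,v_1\in\TCL(x)$ the chord $v_t$ stays in $\bar\I(x)$ by convexity, so the literal $h(t)=\tfrac12|v_t|_x^2-\tfrac12 d(x,\exp_x v_t)^2$ vanishes identically and Lemma~\ref{DERh} is vacuous; your outward-bulged chord $c(t)=(1+\lambda(t))v_t$ and the contradiction argument fill that gap cleanly. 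You also correctly isolate the two genuinely needed inputs --- the strict extended lower bound $c_1>0$ on $\I^{\bar\mu}$ (obtained by a compactness argument exactly as in Lemma~\ref{tenseurine}) and the uniform transversality lower bound $|q_t-\bar q_t|\ge\sqrt\Delta$ supplied by nonfocality via \eqref{Delta1} --- and you are right that the latter is where the argument would fail at focal points. The extra term in $\ddot h$ produced by $\ddot c=O(\kappa\ell^2)$ is exactly $\bigl\langle d_{c(t)}\exp_x^{\,T}(q_t-\bar q_t),\ddot c(t)\bigr\rangle=O(\kappa\ell^2)$ as you assert, and is absorbed by shrinking $\kappa$. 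Two small points worth noting: your estimate $|q_t-\bar q_t|^2\ge 2\Delta-2h(t)\ge\Delta$ uses implicitly that $h=O(\kappa\ell^2)$ on the bulged curve (a one-line consequence of Lemma~\ref{lem1} together with $\rho_x(c(t),\I(x))\le\lambda(t)|v_t|_x$), and Lemma~\ref{LEMineqbism} as printed (with $h\ge 0$ in the hypotheses yet a strictly negative upper bound in the conclusion) is best read, as you in effect do, as asserting that the hypotheses are contradictory; the intended statement drops the sign constraint on $h$ and yields the uniform-convexity margin directly.
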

\begin{proof}[Sketch of the proof ]
Thanks to Theorem \ref{ConNF} we know that for all $x \in M$, $\I(x)$ is convex. Therefore we can define 
$\forall v_0,v_1 \in \partial \I(x)$, $v_t=(1-t)v_0+tv_1\in \I(x)$, $q_t=\exp_x\left(t_c(v_t)v_t\right)$ and
$$
h(t) := \frac{ |v_t|_x^2}{2} - \frac{d( x, y_t)^2}{2} \qquad \forall \, t \in
[0,1].
$$
According to \cite{lv10} we deduce from {\bf (MTW$(\kappa_0,\infty)$)} that $M$ satisfies {\bf (MTW$(\kappa_0,C)$)}, where $C>0$.
We conclude thanks to Lemmas \ref{DERh} and \ref{LEMineqbism}.
\end{proof} 


 Theorem \ref{ConNFbis} is very general, it can be extended to $\kappa$ uniform convexity.
 We only need to find a domain satisfying the control  condition $(1)$--$(4)$ of Theorem \ref{ConNFbis}. For this construction we face two difficulties located around the purely focal points, the first one is to give a sign to the extended tensor near these points, The second one is to isolate them. To be done we need to better understand the repartition of purely focal points, and the behavior of the tensor near them. We adopt this strategy for an analytic manifold of dimension $2$ in \cite{FGR2d}. If one succeed in proving the Villani's conjecture, it will give a very nice formulation of necessary and sufficient conditions for regularity of optimal transport maps \cite{FRV_LAST}.

 


\begin{appendices}
\section{Semiconvexity}\label{App}

Following \cite{lv10} we recall several equivalent definitions for semiconvex functions.

\begin{Def}[Semiconvexity]
Let $O$ be a convex subset of $\R^n$. A function $f$ : $O  \rightarrow \R$ is said to be $\delta$-semiconvex if
equivalently, for any $x$, $y$ in $\R^n$ and $t$ in $[0,1]$,
\begin{enumerate}[(i)]
\item  $f((1-t)x+t(y))\le (1-t)f(x)+tf(y) + \delta t(1-t)\frac{|x-y|^2}{2};$
\item $f+\delta \frac{|x|^2}{2},$ is convex;
\item $\nabla^2 f \ge -\delta.$
\end{enumerate}
\end{Def}
Here (iii) has to be understood in a distributional sense where $f$ is not differentiable. The equivalent of $(i)$, $(ii)$, and $(iii)$ is a classical convexity result. Note that $(iii)$ tells us that as convexity, semiconvexity may be seen as a local property. When $\delta<0$ we find the uniform convexity.

\begin{Def}\label{def:radial}
An open set $V \subset \R^{n+1}$ is a Lipschitz radial set if it is  starshaped around $0$ and its boundary is
Lipschitz.
\end{Def}
Here and in the sequel,
$\rho$ denotes the radial distance as defined in Section \ref{secPREM}.

\begin{Def}\label{def:semiconvex}
A radial set $V$ is said to be
\begin{itemize}
\item {\bf$\delta$-distance-semiconvex }if $\dist(\cdot,\bar V)$ is $\delta$-semiconvex, that is for any $x,y\in V$, the function $h(t):=\dist((1-t)x+ty,\bar
V)$ is $\delta$-semiconvex on $[0,1]$.  
\item {\bf locally $\delta$-distance-semiconvex} if there exists $\nu>0$ such that for any $x,y \in V$ with $|x-y|<\nu$, the function $h(t):=\dist((1-t)x+ty,\bar
V)$ is $\delta$-semiconvex on $[0,1]$.
\item {\bf $\delta$-radial-semiconvex} if $\rho$ is $\delta$ semiconvex, that is for any $x,y \in V$ the function $h(t):=\rho((1-t)x+ty,\bar
V)$ is $\delta$-semiconvex on $[0,1]$.
\item {\bf locally $\delta$-radial-semiconvex} if there exists $\nu>0$ such that for any $x,y \in V$ with $|x-y|<\nu$, the function $h(t):=\rho((1-t)x+ty,\bar
V)$ is $\delta$-semiconvex on $[0,1]$.
\end{itemize}
\end{Def}
These definitions are very much inspired by the definition of $\kappa$--uniform convexity in \cite{lv10}. To obtain both notions in one definition 
one need to consider the signed distance function $\dist_{sign}(\cdot,\partial V)$ (resp. $\rho_{sign}(\cdot,\partial V)$) instead of $\dist(\cdot,\bar V)$ (resp. $\rho$): we take the distance with the negative sign when we are inside $V$.







\begin{Prop}\label{equivalence}
If a radial set $V$ is (locally) $ \delta $-distance-semiconvex then it is (locally) $K^*\delta$-radial-semiconvex for some $K^*>0$. Reciprocally if $V$ is (locally) $\delta$-radial-semiconvex then it is (locally) $\delta$-distance-semiconvex.
\end{Prop}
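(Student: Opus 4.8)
The plan is to isolate a single two--sided comparison between the radial gauge $\rho(\cdot,\bar V)$ and the Euclidean gauge $\dist(\cdot,\bar V)$ of the closed set $\bar V$, and then to deduce both implications by simply unwinding the definitions.

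\emph{Step 1 (the comparison).} We may assume $0\in\operatorname{int}V$, and --- since both notions of semiconvexity of a radial set only involve the behaviour of $V$ near $\partial V$ --- that $V$ is bounded. Writing $f\colon\Sph^n\to(0,\infty)$ for the radial function of $V$ (which is Lipschitz because $\partial V$ is), one has, for $v\notin\bar V$, $\rho(v,\bar V)=|v|-f(v/|v|)=|v-\pi(v)|$, where $\pi(v)\in\partial V$ is the radial projection of $v$; in particular $\dist(v,\bar V)\le\rho(v,\bar V)$ always. The point is the reverse inequality. Since $\partial V$ is Lipschitz, $V$ satisfies a uniform exterior cone condition, and, $V$ being star--shaped about $0$, the cones may be oriented along the outward radial direction: there are $h>0$ and $\theta_1>0$ such that for every $p\in\partial V$ the truncated cone with vertex $p$, axis $p/|p|$, height $h$ and half--angle $\theta_1$ lies in $\R^{n+1}\setminus\bar V$. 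Hence, if $v\notin\bar V$ is close to $\partial V$, then $v$ sits on the axis of the exterior cone attached at $\pi(v)$, so $\dist(v,\bar V)\ge(\sin\theta_1)\,|v-\pi(v)|=(\sin\theta_1)\,\rho(v,\bar V)$; for $v$ at bounded distance from $\partial V$ the analogous bound follows directly, using that $\bar V$ is bounded (and compactness for the intermediate values of $\dist(v,\partial V)$). Letting $K^*$ be the resulting constant, one obtains
$$
(K^*)^{-1}\,\rho(v,\bar V)\ \le\ \dist(v,\bar V)\ \le\ \rho(v,\bar V)
$$
on a fixed neighbourhood of $\bar V$ --- which, as noted, is all that is needed.

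\emph{Step 2 (the two implications).} By Definition \ref{def:semiconvex}, $V$ is $\delta$-radial-semiconvex (resp.\ $\delta$-distance-semiconvex) iff for all $x,y\in V$ the function $h(t):=\rho((1-t)x+ty,\bar V)$ (resp.\ $\dist((1-t)x+ty,\bar V)$) is $\delta$-semiconvex on $[0,1]$; since $x,y\in V\subset\bar V$ this function vanishes at $t=0$ and $t=1$, so by the equivalences (i)--(ii) of semiconvexity it reduces to the two--point estimate
$$
\rho\bigl((1-t)x+ty,\bar V\bigr)\ \le\ \delta\,\frac{t(1-t)}{2}\,|x-y|^2\qquad(\text{resp.\ with }\dist)
$$
for all $x,y\in V$ and $t\in[0,1]$. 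If $V$ is $\delta$-radial-semiconvex, then $\dist((1-t)x+ty,\bar V)\le\rho((1-t)x+ty,\bar V)\le\delta\,\frac{t(1-t)}{2}|x-y|^2$, so $V$ is $\delta$-distance-semiconvex; no constant is lost, which at a conceptual level is because $\dist(\cdot,\bar V)$ is the inf--convolution of $\rho(\cdot,\bar V)$ with the Euclidean norm, and inf--convolution with a convex function preserves $\delta$-semiconvexity. Conversely, if $V$ is $\delta$-distance-semiconvex, Step 1 gives $\rho((1-t)x+ty,\bar V)\le K^*\dist((1-t)x+ty,\bar V)\le K^*\delta\,\frac{t(1-t)}{2}|x-y|^2$, so $V$ is $K^*\delta$-radial-semiconvex. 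In both cases the argument is unchanged for the local notions, simply requiring in addition that $|x-y|$ be small.

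\emph{Main obstacle.} The only real content is Step 1, and it is there that the hypothesis that $V$ be a \emph{Lipschitz} radial set is used. Two points need care: the aperture $\theta_1$ of the exterior cone must be bounded below \emph{uniformly} over $\partial V$ --- it can be taken to depend only on the Lipschitz constant of $f$ and on $\inf f>0$ --- and the cone must be orientable along the radial direction, which is precisely where star--shapedness enters; moreover the comparison is, and need only be, local near $\partial V$, so one should first record that both semiconvexity notions are insensitive to $V$ away from its boundary (reducing, if one wishes, to the ``excursions'' of a segment outside $\bar V$). Granting the comparison, Step 2 is elementary bookkeeping.
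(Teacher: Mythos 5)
Your proof is correct and follows essentially the same route as the paper: everything reduces to the two-sided comparison $\dist(\cdot,\bar V)\le\rho(\cdot,\bar V)\le K^*\dist(\cdot,\bar V)$, after which both implications are immediate from the two-point formulation of semiconvexity. The only difference is that the paper imports this comparison directly from equation (A.4) of Loeper--Villani, whereas you supply a self-contained proof of it via a uniform radially-oriented exterior cone condition, which is a legitimate (and standard) way to obtain the same constant $K^*$ from the Lipschitz bound on the radial function and the lower bound $\inf f>0$.
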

\begin{proof}
Equation (A.4) of \cite{lv10} provides a constant $K^*>0$ depending on the dimension, the Lipschitz regularity, and the diameter of $V$, such that 
$$
\dist(\cdot,\partial V)\leq \rho(\cdot,\partial V) \leq K^* \dist(\cdot,\partial V).
$$
\end{proof}
\begin{Prop}
If a radial set $V$ is $0$-radial-semiconvex  then it is convex.
\end{Prop}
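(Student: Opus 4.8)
The plan is to derive the convexity of $V$ from that of its closure $\bar V$, using the one-dimensional fact that a nonnegative convex function on $[0,1]$ vanishing at both endpoints must vanish identically. The key preliminary observation is that, since $V$ is starshaped around the origin and $0 \in V$, one has $\rho(z,\bar V) = 0$ if and only if $z \in \bar V$: writing $r$ for the (Lipschitz, hence continuous) radial function of $V$, so that $\bar V = \{t\omega : \omega \in \Sph^{n},\ 0 \le t \le r(\omega)\}$, the infimum defining $\rho(z,\bar V)$ is realized either at the origin (contributing $|z|$) or at the point $r(z/|z|)\,z/|z|$ of $\partial V$ on the ray through $z$ (contributing $|z| - r(z/|z|)$); the latter is the smaller one, and it equals $0$ precisely when $|z| \le r(z/|z|)$, i.e.\ when $z \in \bar V$. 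In particular $\rho(\cdot,\bar V)$ is finite on all of $\R^{n+1}$, it is $\ge 0$, and it vanishes exactly on $\bar V$.

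Next I would fix $x,y \in V$ and set $h(t) := \rho\bigl((1-t)x + ty,\ \bar V\bigr)$ for $t \in [0,1]$. Since $V$ is $0$-radial-semiconvex, $h$ is $0$-semiconvex, i.e.\ convex, on $[0,1]$; moreover $h(0) = \rho(x,\bar V) = 0$ and $h(1) = \rho(y,\bar V) = 0$ because $x,y \in V \subset \bar V$. Convexity then forces $h(t) \le (1-t)h(0) + t\,h(1) = 0$ for all $t$, while $h \ge 0$ by construction, so $h \equiv 0$ on $[0,1]$. By the characterization above this means $(1-t)x + ty \in \bar V$ for every $t \in [0,1]$; hence $[x,y] \subset \bar V$ for all $x,y \in V$. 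Since $V$ is dense in $\bar V$ and $\bar V$ is closed, the inclusion $[x,y] \subset \bar V$ holds for all $x,y \in \bar V$, so $\bar V$ is convex.

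Finally, to conclude that $V$ itself is convex, I would use that $V$ is a Lipschitz radial set: its radial function $r$ is continuous, which gives $V = \{t\omega : \omega \in \Sph^{n},\ 0 \le t < r(\omega)\} = \mathrm{int}(\bar V)$, and the interior of a convex set is convex. The only point requiring any care is the bookkeeping with the radial distance $\rho$ rather than the Euclidean distance --- specifically that $\rho(\cdot,\bar V)$ is finite and vanishes exactly on $\bar V$, and that $h(0) = h(1) = 0$ --- but all of this is immediate from starshapedness, and I do not anticipate any genuine obstacle: the statement is essentially the limiting ($\delta = 0$) case of the semiconvex comparison arguments (Lemmas \ref{LEMineq}--\ref{LEMineqbis}) already used in the paper, where the chord bound collapses to $0$.
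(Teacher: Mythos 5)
Your proof is correct and follows essentially the same approach as the paper's: apply the definition of $0$-radial-semiconvexity to get that $h(t)=\rho((1-t)x+ty,\bar V)$ is convex, nonnegative, and vanishes at $t=0,1$, hence vanishes identically, giving $[x,y]\subset\bar V$. You go a little further than the paper's one-line proof by carefully justifying that $\rho(\cdot,\bar V)$ vanishes exactly on $\bar V$ and by explicitly passing from the convexity of $\bar V$ to that of $V$ via $V=\mathrm{int}(\bar V)$; these are reasonable completions of a terse argument, not a different method.
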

\begin{proof}
For any $x,y \in V$ we have  $\rho((1-t)x+ty,\bar V) \leq 0$, that is $[x,y]\in \bar V$.
\end{proof}


\begin{Prop}
Let $V$ be a radial set which is locally $\delta$-distance-semiconvex, then $V$ is $\delta$-distance-semiconvex. 
If  $V$ is locally $\delta$-radial-semiconvex then $V$ is $K^*\delta$-radial-semiconvex, where $K^*$ is given by Proposition \ref{equivalence}.
\end{Prop}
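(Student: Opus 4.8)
The plan is to reduce the statement to the elementary fact that convexity of a continuous function on an interval is a local property, and then to invoke Proposition~\ref{equivalence} to transfer the conclusion from the distance function to the radial function.

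\emph{First part.} Fix $x,y\in V$ and set $h(t):=\dist((1-t)x+ty,\bar V)$ for $t\in[0,1]$; since $\dist(\cdot,\bar V)$ is $1$-Lipschitz, $h$ is Lipschitz, in particular continuous. As in the passage between forms (i) and (ii) of the definition of semiconvexity, the identity $(1-s)t_1^2+st_2^2-((1-s)t_1+st_2)^2=s(1-s)(t_1-t_2)^2$ shows that $h$ is $\delta$-semiconvex on $[0,1]$ if and only if the auxiliary function $\widetilde h(t):=h(t)+\tfrac{\delta}{2}|x-y|^2\,t^2$ is convex on $[0,1]$. I would then check that $\widetilde h$ is locally convex: given $t_0\in[0,1]$ and $t_1<t_2$ close enough to $t_0$ that $|t_1-t_2|\,|x-y|<\nu$, with $\nu>0$ the constant furnished by local $\delta$-distance-semiconvexity, put $a:=(1-t_1)x+t_1y$ and $b:=(1-t_2)x+t_2y$, so that $|a-b|=|t_1-t_2|\,|x-y|<\nu$. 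Local $\delta$-distance-semiconvexity then asserts that $s\mapsto\dist((1-s)a+sb,\bar V)$ is $\delta$-semiconvex on $[0,1]$; since $(1-s)a+sb=(1-u)x+uy$ with $u=(1-s)t_1+st_2$, this function is exactly $s\mapsto h((1-s)t_1+st_2)$, and the same quadratic identity — now with $|t_1-t_2|^2|x-y|^2$ in place of $|x-y|^2$ — shows that $\widetilde h$ is convex on $[t_1,t_2]$, hence on a neighbourhood of $t_0$.

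\emph{From local to global.} Covering $[0,1]$ by such neighbourhoods and using that a continuous function on an interval which is convex near each point is convex on the whole interval (equivalently, $\widetilde h''\ge 0$ as a distribution, which is a purely local condition), I conclude that $\widetilde h$ is convex on $[0,1]$, i.e. $h$ is $\delta$-semiconvex on $[0,1]$. Since $x,y\in V$ were arbitrary, $V$ is $\delta$-distance-semiconvex.

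\emph{Second part.} If $V$ is locally $\delta$-radial-semiconvex, the reciprocal half of Proposition~\ref{equivalence} gives that $V$ is locally $\delta$-distance-semiconvex; by the first part it is then $\delta$-distance-semiconvex; and applying the direct half of Proposition~\ref{equivalence} once more yields that $V$ is $K^*\delta$-radial-semiconvex. I expect the only point requiring a little care to be the bookkeeping of the normalization constants when restricting $h$ to a subsegment $[t_1,t_2]$, but this is handled by the very same quadratic-completion identity used to relate the equivalent forms of semiconvexity — there is no geometric obstacle here, the whole content being the locality of convexity.
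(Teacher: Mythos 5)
Your reduction of the first assertion to ``convexity is a local property of continuous functions on an interval'' has a genuine gap, and it sits exactly where the difficulty of the statement lies. The hypothesis of local $\delta$-distance-semiconvexity only applies to pairs of points $a,b$ that both belong to $V$. In your covering argument you apply it to $a=(1-t_1)x+t_1y$ and $b=(1-t_2)x+t_2y$, which are points of the chord $[x,y]$; but $V$ is merely star-shaped (indeed, establishing its convexity is the whole point of this circle of ideas), so the chord between two points of $V$ may leave $V$, and for $t_0$ with $(1-t_0)x+t_0y\notin V$ the hypothesis gives you no information whatsoever about $\widetilde h$ near $t_0$. This is not a matter of ``bookkeeping of normalization constants'': on the portion of the chord lying outside $\bar V$ the function $\dist(\cdot,\bar V)$ is a distance to a closed set, which is in general only locally semi\emph{concave} there, so local convexity of $\widetilde h$ on that portion is a real assertion that must be extracted from the geometry (star-shapedness and Lipschitz boundary of $V$), not from the locality of convexity. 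The paper does not attempt this directly: it invokes Proposition A.4 of \cite{lv10}, whose proof is designed precisely to bridge this local-to-global passage for non-convex star-shaped sets.

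The rest of your argument is sound where it applies: the quadratic-completion identity correctly shows that, \emph{when} $a,b\in V$, local $\delta$-distance-semiconvexity for the pair $(a,b)$ is equivalent to convexity of $\widetilde h$ on $[t_1,t_2]$ with the right constant, and your derivation of the second assertion from the first (local radial $\Rightarrow$ local distance $\Rightarrow$ global distance $\Rightarrow$ $K^*\delta$ radial, via the two halves of Proposition \ref{equivalence}) matches the paper's intent. To repair the first part you would need either to reproduce the argument of \cite[Proposition A.4]{lv10} or to supply an independent argument handling the sub-arcs of $[x,y]$ that exit $V$.
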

\begin{proof}
The first assertion can be deduced from Proposition A.4 of \cite{lv10}. The second follows from our Proposition \ref{equivalence}
\end{proof}

\section{The tangent cut loci  are Lipschitz continuous}
\subsection{Introduction}
Let $(M,g)$ be a smooth compact Riemannian manifold of dimension $n\geq 2$. 
We know that the function $t_{cut}$ defined in Section \ref{secPREM} is bounded from below by the injectivity radius of $M$ and bounded from above by the diameter of $M$.  

In the spirit of the definition of $t_{cut}$ and $t_f$ we define, for any subset $O$ of $TM$ with starshaped fibers, the boundary function $t_b$~:~$UM \to \R^+$ by
\begin{eqnarray*}
t_{b}(x,v) :=  \sup \Bigl\{ t \geq 0 \, \vert \, tv \in O_x \Bigr\}
\end{eqnarray*}
We then give  the notion of  $\kappa$--Lipschitz continuity for $O$.
\begin{Def}[$\kappa$--Lipschitz continuity]\label{Llip}
Let $O \subset TM$ be such that, for any $x\in M$, the fiber $O_x$ is starshaped. 
The set $O$ is $\kappa$--Lipschitz continuous if for any $(\bar x, \bar v) \in UM$,
there exists a $\kappa$--Lipschitz continuous function $\tau$ defined on a neighbourhood in $UM$ of $(\bar x, \bar v)$ such that $t_{b}(x,v) \le \tau(x,v)$ and $ t_{b}(\bar x, \bar v) = \tau(\bar x, \bar v)$, where $t_{b}$ is the boundary function for $O$.
\end{Def}
This definition implies that the boundary of $O_x$ is locally a $\kappa$--Lipschitz continuous function.  
Our aim is to prove the following theorem:
\begin{Thm}[Lipschitz continuity of the tangent cut loci]\label{cutlip}~ \\   
\vspace{-.4cm}
\begin{enumerate}
\item There exists  $\kappa>0$ such that for each $x\in M$ the set $ I(x)$ is $\kappa$-Lipschitz continuous. Moreover for any $(x,v)\in UM$ and $(y,w)\in U_{\exp_x{(\R v)}}M$ we have 
\[   \left|t_{cut} (y,w) - t_{cut} (x,v) \right| \leq \kappa\, d_{TM}\left( (x,v), (y,w)\right).
\]
We call this property the  Lipschitz continuity in the geodesic direction.  
\item If $M$ satisfies $\delta(TM)>0$ then there exists  $\kappa>0$ such that $\left\lbrace  (x,p)     \, | \,x\in M, \, p\in I(x) \right\rbrace$ is $\kappa$-Lipschitz continuous.  
\item If $M$ has dimension $2$ then there exists  $\kappa>0$ such that $\left\lbrace  (x,p)    \, |\, x\in M, \, p\in I(x) \right\rbrace$ is $\kappa$-Lipschitz continuous.
\end{enumerate}
\end{Thm}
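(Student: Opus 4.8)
The plan is to check, at every $(\bar x,\bar v)\in UM$, the local condition in Definition \ref{Llip}: I will produce a Lipschitz function $\tau$ --- of the direction $v$ alone for item~(1), of the whole pair $(x,v)$ for items~(2)--(3) --- defined near $(\bar x,\bar v)$, with $t_{cut}\le\tau$ on a neighbourhood and $t_{cut}(\bar x,\bar v)=\tau(\bar x,\bar v)$; a standard compactness argument upgrades uniformly bounded local constants to a single $\kappa$. Set $\bar t=t_{cut}(\bar x,\bar v)$, $\bar y=\exp_{\bar x}(\bar t\bar v)$. The whole proof turns on a dichotomy: either (i) $\bar y$ is joined to $\bar x$ by a minimizing geodesic of direction $\bar v'\neq\bar v$ that is \emph{not} conjugate at $\bar y$ (this always occurs when $\bar t<t_f(\bar x,\bar v)$, for then $\bar y\in\cut(\bar x)$ is non-conjugate along the $\bar v$-geodesic and must carry a second minimizer), or (ii) the remaining, degenerate situations --- chiefly the \emph{purely focal} case in which the only minimizing geodesic to $\bar y$ is a single one that is conjugate at $\bar y$.

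For case~(i) I would build $\tau$ by the implicit function theorem, in the spirit of Itoh--Tanaka and Li--Nirenberg \cite{it01,ln05}. Fix the non-conjugate competitor $\bar v'$; then for $x$ near $\bar x$ the map $(s,v'')\mapsto\exp_x(sv'')$ is a diffeomorphism from a neighbourhood of $(\bar t,\bar v')$ onto a neighbourhood of $\bar y$, so for $(x,v)$ near $(\bar x,\bar v)$ and $t$ near $\bar t$ there are unique $s'(x,v,t)$ near $\bar t$ and unit $v''(x,v,t)$ near $\bar v'$ with $\exp_x\!\big(s'(x,v,t)\,v''(x,v,t)\big)=\exp_x(tv)$, depending smoothly on $(x,v,t)$; define $\tau(x,v)$ by $s'(x,v,\tau(x,v))=\tau(x,v)$. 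A first variation of arclength gives $\partial_t s'=\langle\dot\gamma_v(t),u(t)\rangle$, where $u(t)$ is the final unit velocity of the competing geodesic --- the Jacobi field produced by varying $v''$ among unit vectors stays normal to that geodesic, so only the length-variation term survives --- and at the crossing this equals $\cos\theta$, $\theta$ the angle at $\bar y$ between the two geodesics, which is positive since distinct geodesics cannot share both endpoint and final velocity. Hence $\partial_t(t-s')=1-\cos\theta>0$, the implicit function theorem produces a smooth $\tau$ with $\tau(\bar x,\bar v)=\bar t$, and $\tau\ge t_{cut}$ because at parameter $\tau(x,v)$ two distinct geodesics of equal length $\tau(x,v)$ reach $\exp_x(\tau(x,v)v)$, killing minimality beyond $\tau(x,v)$. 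Crucially this construction lets $x$ vary, so it is available for all three items, with Lipschitz constant controlled by $(1-\cos\theta)^{-1}$ and by derivatives of $\exp$, which stay bounded away from the purely focal locus.

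For case~(ii) I would use that $t_{cut}\le t_f$ everywhere, with equality at such a $(\bar x,\bar v)$ (the $\bar v$-geodesic being conjugate at $\bar y$); thus it suffices to dominate $t_f$ by a Lipschitz support. For item~(1), with $x$ fixed, this is exactly what the local semiconvexity of $\overline{\NF}(x)\subset T_xM$ yields for the boundary function $v\mapsto t_f(x,v)$ (see \cite{cr10} and Appendix \ref{App}). The ``moreover'' clause of item~(1) then follows by combining this per-fibre bound with the fact that $t\mapsto t_{cut}\big(\exp_x(tv),\dot\gamma_v(t)\big)$ is affine with slope $-1$ on the interval where $\gamma_v$ is still minimizing, with the geodesic flow being bi-Lipschitz between time and $d_{TM}$ on compacta, and with a triangle inequality that absorbs an arbitrary terminal direction $w\in U_yM$; a compactness argument fixes one $\kappa$ along the whole geodesic. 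For item~(2), the hypothesis $\delta(TM)>0$ precisely excludes purely focal cut velocities, so $\bar y$ always carries a genuine second minimizing geodesic, which can (after replacing it by a nearby non-conjugate competitor if needed) be fed into the construction of case~(i) with $x$ varying; this gives the $\kappa$-Lipschitz continuity of $\{(x,p):x\in M,\ p\in\I(x)\}$.

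Item~(3) is the genuinely hard point, and the purely focal velocities are the obstacle: in dimension~$2$ they cannot be ruled out, and near one of them there is no second minimizing geodesic on which to anchor the implicit function construction while $x$ moves, nor does the per-fibre argument suffice once $x$ varies. The plan here is to exploit the low-dimensional structure of the conjugate locus established in \cite{cr10} --- in dimension~$2$, $\TFL(x)$ is a curve with controlled singularities --- to show that $\overline{\NF}$ is locally semiconvex \emph{as a subset of $TM$} near these velocities, which then supplies the required Lipschitz upper support for $(x,v)\mapsto t_f(x,v)$, hence for $t_{cut}$, exactly at the points left open by cases~(i) and (ii). Assembling the three regimes and taking $\kappa$ to be the largest of the (uniformly bounded) constants produced in each yields the theorem. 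To summarize: away from the purely focal locus the argument collapses either to the implicit function theorem applied to a pair of minimizing geodesics or to the per-fibre semiconvexity of $\overline{\NF}(x)$, both robust; controlling the cut locus as it approaches the conjugate locus at purely focal cut velocities --- which is where $\delta(TM)>0$, respectively $\dim M=2$, is really needed --- is the crux of the matter.
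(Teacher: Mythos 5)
Your overall architecture -- local upper support functions $\tau$ built by the implicit function theorem at a second minimizing geodesic, with $t_f$ serving as the support at focal cut velocities -- is the same as the paper's, and your case~(i) construction is essentially the Itoh--Tanaka/Li--Nirenberg argument that the paper also uses. But the proposal does not close the regime you yourself identify as the crux, namely velocities $v\in\TCL(x)$ with $0<\delta(v)$ small (close to, but not at, a purely focal velocity). There $t_f$ is \emph{not} an admissible support: it dominates $t_{cut}$ but does not touch it at $(\bar x,\bar v)$ since $t_{cut}<t_f$ strictly, so case~(ii) does not apply; and your case~(i) constant $(1-\cos\theta)^{-1}\sim |v-\bar v|_x^{-2}$ blows up, so no uniform $\kappa$ comes out. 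This is exactly what the paper's ``Case 2: $\delta(v)\le\bar\delta$'' is for: one keeps the IFT support $\tau$ and shows that, although $|\partial_t\Psi|$ is only bounded below by $c\,|v-\bar v|^2$ (see \eqref{estdt3}), the numerator exhibits a matching second-order cancellation, $|\langle \eta, q-\bar q\rangle_y|\le C|v-\bar v|^2$, obtained from the symplectic invariance of the Jacobi flow \eqref{equationproblem} together with the Taylor estimate $|J_1^0(t_{cut})(v-\bar v)|\le A|v-\bar v|^2$ forced by $\exp_x v=\exp_x\bar v$. Without this cancellation item~(1) is not proved, and items~(2)--(3) inherit the gap (item~(2) least, since there $\delta(v)\ge\delta(TM)>0$; but even there your competitor map $(s,v'')\mapsto\exp_x(sv'')$ requires the second minimizer to be non-conjugate, which need not hold -- the paper's competitor is a mere $C^1$ family fed into the first variation formula, precisely to avoid this). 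Your dichotomy is also not exhaustive: a cut velocity with $t_{cut}<t_f$ all of whose second minimizers are conjugate falls into neither case as you have set them up.

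Two further specific problems. First, the ``affine with slope $-1$'' claim for $s\mapsto t_{cut}(\gamma_v(s),\dot\gamma_v(s))$ is false: on the flat torus $\R^2/\Z^2$ this function is constant equal to $1/2$ along a closed geodesic of a shortest homotopy class, not $1/2-s$. Only the one-sided inequality $t_{cut}(\gamma(s),\dot\gamma(s))\ge t_{cut}(x,v)-s$ is free (restriction of minimizers); the reverse bound is precisely the nontrivial content of the geodesic-direction Lipschitz estimate, which the paper obtains by evaluating $d\tau$ at $\zeta=\pm v$ and using $\langle v,v-\bar v\rangle_x=\tfrac12|v-\bar v|_x^2$ against the $|v-\bar v|^2$ denominator. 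Second, your plan for item~(3) (semiconvexity of $\bar\NF$ as a subset of $TM$) again only produces a support where $t_{cut}=t_f$, so it cannot cover the near-purely-focal, non-focal cut velocities; the paper's dimension-two argument is instead to split $x$-perturbations into the direction $\pm v$ and the direction $\Ker J_0^1(t_{cut}(e_v))$, prove the uniform bound on $d\tau$ separately in each (via \eqref{equationproblem4}), and observe that in dimension $2$ these two directions span $T_xM$. That decomposition -- and the symplectic machinery behind Theorem \ref{foclip} and Lemma \ref{tfjacobi4} that makes it quantitative -- is the missing idea.
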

To prove this theorem,  we first prove the two following results:
\begin{Thm}[Lipschitz continuity of the tangent focal loci]\label{foclip}   
There exists a constant $\kappa$ such that $\left\lbrace  (x,p)   \, |\, x\in M, \, p \in \NF(x)  \right\rbrace$ is $\kappa$-Lipschitz continuous.
\end{Thm}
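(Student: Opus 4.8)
The boundary function of $O=\NF$ is the first conjugate time $t_{f}:\UM\to(0,+\infty]$, $t_{f}(x,v)=\sup\{t\ge 0:d_{sv}\exp_{x}\text{ is nonsingular for all }s\in[0,t]\}$, and the statement to prove is that $t_{f}$ admits at every point a $\kappa$-Lipschitz upper barrier matching it there, with $\kappa$ uniform. If $t_{f}(\bar x,\bar v)=+\infty$ the ray $\R_{+}\bar v$ meets no conjugate point and there is nothing to do, so I would fix $(\bar x,\bar v)\in\UM$ with $\bar t:=t_{f}(\bar x,\bar v)<\infty$; since $\{t_{f}\le R\}$ is compact in $\UM$ for each $R$ (as $t_{f}$ is continuous where finite), it suffices to produce a barrier whose Lipschitz constant is controlled uniformly over such a sublevel set (only $R\sim\diam(M)$ is relevant for Theorem \ref{cutlip}).

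The tool is the second variation of the energy together with the Morse index theorem. Writing $\gamma_{x,v}(t):=\exp_{x}(tv)$ and letting
\[
I^{(x,v)}_{T}(V,V):=\int_{0}^{T}\Bigl(|\dot V|^{2}-\langle R(V,\dot\gamma_{x,v})\dot\gamma_{x,v},V\rangle\Bigr)\,dt
\]
be the index form on vector fields along $\gamma_{x,v}|_{[0,T]}$ vanishing at the endpoints, the Morse index theorem gives $t_{f}(x,v)<T$ as soon as $I^{(x,v)}_{T}$ has a negative direction. So it is enough, for each small $\epsilon>0$, to exhibit one negative direction of $I^{(\bar x,\bar v)}_{\bar t+\epsilon}$ with a quantitative gain, and then to propagate negativity to nearby $(x,v)$. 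I would take $V_{\epsilon}$ to be a truncated Jacobi field: choose a Jacobi field $\bar J$ along $\gamma_{\bar x,\bar v}$ with $\bar J(0)=0$, $\bar J(\bar t)=0$, $|\dot{\bar J}(\bar t)|=1$, set $V_{\epsilon}=\bar J$ on $[0,\bar t-\epsilon]$ and interpolate linearly (in parallel coordinates) from $\bar J(\bar t-\epsilon)$ to $0$ on $[\bar t-\epsilon,\bar t+\epsilon]$. Using the identity $I_{[a,b]}(\bar J,\bar J)=\bigl[\langle\bar J,\dot{\bar J}\rangle\bigr]_{a}^{b}$ valid for Jacobi fields and $|\bar J(\bar t-\epsilon)|=\epsilon+O(\epsilon^{2})$, one gets $I^{(\bar x,\bar v)}_{\bar t+\epsilon}(V_{\epsilon},V_{\epsilon})=-\tfrac{\epsilon}{2}+O(\epsilon^{2})\le-c\,\epsilon$ for $\epsilon$ small.

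For the propagation step, in a chart around $\bar x$ one identifies $V_{\epsilon}$ with a field along the nearby geodesic $\gamma_{x,v}$ and estimates $\bigl|I^{(x,v)}_{\bar t+\epsilon}(V_{\epsilon},V_{\epsilon})-I^{(\bar x,\bar v)}_{\bar t+\epsilon}(V_{\epsilon},V_{\epsilon})\bigr|\le L\,d_{TM}\bigl((x,v),(\bar x,\bar v)\bigr)\int_{0}^{\bar t+\epsilon}|V_{\epsilon}|^{2}\,dt$, with $L$ depending only on the $C^{1}$-size of the metric and on the Lipschitz constant of the geodesic flow on $[0,\bar t+\epsilon]$ — the same kind of estimate already used in the proof of Lemma \ref{lem2}. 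Hence $I^{(x,v)}_{\bar t+\epsilon}(V_{\epsilon},V_{\epsilon})<0$, and so $t_{f}(x,v)<\bar t+\epsilon$, whenever $d_{TM}((x,v),(\bar x,\bar v))$ is smaller than a fixed multiple of $\epsilon$. Since the relevant constants are bounded uniformly on $\{t_{f}\le R\}$, letting $\epsilon$ vary yields $t_{f}(x,v)\le\bar t+\kappa\,d_{TM}((x,v),(\bar x,\bar v))$ on a neighbourhood of $(\bar x,\bar v)$ with $\kappa$ uniform, and then $\tau(x,v):=\bar t+\kappa\,d_{TM}((x,v),(\bar x,\bar v))$ is the desired $\kappa$-Lipschitz barrier, which also gives the Lipschitz-in-the-geodesic-direction refinement since it applies in particular to variations along the flow.

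The main obstacle is that $\bar t$ may be a conjugate time of multiplicity $k>1$, i.e.\ $\det d_{tv}\exp_{x}$ vanishes to order $k$ in $t$ at $\bar t$, so the implicit function theorem applied directly to this determinant is unavailable and the vanishing order can even jump under perturbation of $(x,v)$; the virtue of the index-form argument is that it bypasses this entirely, since one only needs a single negative direction of a quadratic form and negativity of a quadratic form is stable under small perturbations regardless of multiplicity. (Alternatively one can follow Warner's analysis of the singularities of $\exp_{x}$ together with a preparation theorem.) This argument also re-establishes the local semiconvexity of $\bar\NF(x)$ recorded earlier (cf.\ \cite{cr10}), and, combined with the analysis of the locus where two minimizing geodesics meet, it is the mechanism behind Theorem \ref{cutlip}.
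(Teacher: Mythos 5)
Your proof is correct, and it takes a genuinely different route from the paper. The paper works in the symplectic formalism: it views the Jacobi flow as a linear symplectic flow on $T_xM\times T_xM$, identifies the focal time as the first time at which the Lagrangian subspace $L_{t,v}$ (initial data whose Jacobi field vanishes at time $t$) hits the vertical, writes $L_{t,v}$ locally as the graph of a symmetric matrix $S(t)$ over a Lagrangian complement adapted to $\Ker J_0^1(t_f)$, shows $q^t S(t_f)q=0$ together with the uniform transversality $\|\dot S(t_f)\|\ge\delta>0$ (a computation using invariance of the symplectic form under the Jacobi flow), and then applies the Implicit Function Theorem to the scalar equation $q^tS(t)q=0$ to produce a $C^1$ barrier $\tau$ with a uniformly bounded gradient. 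You instead invoke the Morse index theorem: you exhibit, at time $\bar t+\epsilon$, a strictly negative direction of the index form with an explicit $-\epsilon/2+O(\epsilon^2)$ gain obtained from a truncated Jacobi field, and then propagate strict negativity to nearby $(x,v)$ because, in parallel-transport coordinates, only the curvature term of the index form moves and it does so Lipschitz-continuously in $(x,v)$, with a bound controlled by $\int|V_\epsilon|^2$, which is $O(1)$ uniformly on $\{t_f\le R\}$. Both arguments ultimately exploit the same structural fact (one must not track $\det J_1^0$, whose vanishing order can jump, but a quantity with a uniform transversal derivative); the paper implements it via the graph matrix $S(t)$, you implement it via the sign of a quadratic form evaluated on a single fixed test field, which is stable under perturbation regardless of the multiplicity of the conjugate point. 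Your route is more elementary, avoids the book-keeping of symplectic graphs and the ad hoc choice of the index $l$, and handles multiplicity jumps for free. The price you pay is that you get only a cone-shaped barrier $\tau(x,v)=\bar t+\kappa\,d_{TM}((x,v),(\bar x,\bar v))$: this establishes $\kappa$-Lipschitz continuity exactly as required by Definition \ref{Llip}, but, contrary to your closing parenthetical remark, it does not immediately re-establish Theorem \ref{focsemi}, since a cone has no upper bound on its second differential at the vertex. The paper's implicit-function barrier is $C^1$ with controllable second differential, so semiconcavity falls out at once; in your approach one would need to refine the truncation and propagation estimates to extract a $C^2$-bounded barrier (replacing the linear-in-$\epsilon$ gain by a sharper expansion). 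For Theorem \ref{foclip} itself, though, your argument is complete.
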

\begin{Thm}[Semiconcavity of the tangent focal loci]\label{focsemi}~
The set $\left\lbrace  (x,p)    \, | \,x\in M, \, p \in \NF(x) \right\rbrace$ is semiconcave.
\end{Thm}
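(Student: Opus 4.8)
The plan is to reduce the assertion to the local semiconcavity of the first focal time $t_f$ as a function on $\UM$, and then to prove that by a Jacobi‑field analysis together with the classical fact that $-\sqrt{\varphi}$ is locally semiconcave whenever $\varphi\ge 0$ is of class $C^2$. First I would recall, using the remark following Definition~\ref{def:semiconvex}, that saying the set $\{(x,p)\mid p\in\NF(x)\}$ is semiconcave means that, read in charts of $\TM$, the signed distance to its boundary is locally semiconcave; since that boundary is the graph $\{t_f(x,v)\,v\mid (x,v)\in\UM\}$ and each $\NF(x)$ is star‑shaped, this is elementarily equivalent to the local semiconcavity of $t_f:\UM\to(0,\infty)$ (the same reduction underlies the companion statement Theorem~\ref{foclip}, whose weaker Lipschitz conclusion I would establish first, or assume). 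So fix $(\bar x,\bar v)\in\UM$, set $\bar T:=t_f(\bar x,\bar v)$, and aim to produce a smooth function $\tau$ near $(\bar x,\bar v)$, with $C^2$‑norm bounded independently of the base point, such that $\tau\ge t_f$ locally and $\tau(\bar x,\bar v)=\bar T$.

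Next I would express $t_f$ through Jacobi fields. In a local trivialization of $\TM$, along $s\mapsto\exp_x(sv)$, let $A_{x,v}(s)$ be the matrix of normal Jacobi fields with $A_{x,v}(0)=0$, $A_{x,v}'(0)=\Id$; it solves a linear ODE depending smoothly on $(x,v)$, and $t_f(x,v)=\inf\{s>0:\det A_{x,v}(s)=0\}$. At $(\bar x,\bar v,\bar T)$ the kernel $K:=\ker A_{\bar x,\bar v}(\bar T)$ has some dimension $k\ge 1$; splitting off the non‑degenerate block via a Schur complement produces a smooth $\Sym(k)$‑valued map $S(x,v,s)$ near $(\bar x,\bar v,\bar T)$ with $\det A_{x,v}(s)=0\iff\det S(x,v,s)=0$ there, and with $S(\bar x,\bar v,\bar T)=0$, $\partial_s S(\bar x,\bar v,\bar T)=0$, $\partial_s^2 S(\bar x,\bar v,\bar T)=2G\succ 0$ (positive‑definiteness of $G$ being forced by ODE uniqueness: a Jacobi field vanishing together with its derivative is trivial). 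Using Theorem~\ref{foclip} and the fact that no other Jacobi direction focuses before $\bar T$ at $(\bar x,\bar v)$, one checks that for $(x,v)$ close to $(\bar x,\bar v)$ the first focal time satisfies $t_f(x,v)=\inf\{s:\lambda_{\min}(S(x,v,s))\le 0\}$, with $S(x,v,\cdot)\succ0$ just below that time.

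The final step, and the main obstacle, is to read off semiconcavity from this implicit description in the \emph{degenerate} case $k>1$ while controlling the square‑root singularity. For each unit $u\in\R^k$ the scalar $q_u(x,v,s):=\langle S(x,v,s)u,u\rangle$ is smooth, and at the base point $q_u(\bar x,\bar v,s)=\langle Gu,u\rangle(s-\bar T)^2+O((s-\bar T)^3)$ with $\langle Gu,u\rangle>0$ \emph{uniformly} in $u$; hence the parametric Morse lemma gives a smooth reparametrization $s=\phi_u(x,v,\sigma)$, depending smoothly on $u$, with $q_u(x,v,\phi_u(x,v,\sigma))=\sigma^2+e_u(x,v)$, where $e_u$ is smooth, $e_u(\bar x,\bar v)=0$, and the first zero of $q_u(x,v,\cdot)$ equals $\tau_u(x,v):=\phi_u\bigl(x,v,-\sqrt{(-e_u(x,v))_+}\bigr)$ whenever $e_u(x,v)\le 0$. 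Expanding $\phi_u$ in $\sigma$, all resulting terms in $(x,v)$ are smooth except one of the form $-(\text{positive smooth})\cdot\sqrt{(-e_u)_+}$, which is semiconcave by the cited fact with constants uniform in $u$, while the remaining fractional powers $(-e_u)_+^{3/2},\dots$ have bounded second derivatives by Glaeser's inequality $|\nabla e_u|^2\le 2\|D^2 e_u\|_\infty(-e_u)_+$. Thus each $\tau_u$ is locally semiconcave with uniform constants, and since $\tau_u(x,v)\ge t_f(x,v)$ always (where $q_u$ vanishes, $\lambda_{\min}(S)\le 0$) with equality for the genuine focal direction of $(x,v)$, one obtains $t_f=\inf_u\tau_u$ on a neighborhood of $(\bar x,\bar v)$ — an infimum of a uniformly semiconcave family — so $t_f$ is locally semiconcave and the theorem follows.

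I expect the bookkeeping around the index set $\{u:e_u\le 0\}$ and the verification that this local description genuinely captures the \emph{first} focal time (not a later one) to be the delicate points; as an alternative one may, following \cite{cr10}, realize $-t_f$ as the value function of a Mayer problem for the associated Riccati flow and invoke the general semiconcavity theory for such value functions, which bypasses the explicit Morse‑lemma computation at the cost of setting up the control‑theoretic framework.
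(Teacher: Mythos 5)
Your reduction to the local semiconcavity of $t_f$, your identification of the focal time with the first vanishing of $\det J^0_1$, and the idea of producing a smooth upper‑touching function $\tau\ge t_f$ with a uniform $C^2$ bound are all the right skeleton, and essentially match the paper. But the central analytic claim on which your Morse‑lemma argument rests is false, and this is the point where the paper does something simpler.

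You assert that the Schur complement $S(x,v,s)$ of $A_{x,v}(s)=J^0_1(s)$ (split along the kernel/co\-kernel block at $\bar T=t_f(\bar x,\bar v)$) satisfies $\partial_s S(\bar x,\bar v,\bar T)=0$ and $\partial^2_s S(\bar x,\bar v,\bar T)\succ 0$. This is not what ODE uniqueness gives. Since $A_{12}(\bar T)=A_{21}(\bar T)=A_{22}(\bar T)=0$ in the adapted block basis, one has $\partial_s S(\bar T)=\partial_s A_{22}(\bar T)$, and for $q\in\ker A(\bar T)$ the constancy of the Wronskian $A^T\!\dot A-\dot A^T\!A$ shows that $\dot A(\bar T)q\perp\mathrm{Im}\,A(\bar T)$, i.e.\ $\dot A(\bar T)$ maps the kernel into the cokernel. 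The fact you invoke — a Jacobi field with $J(\bar T)=\dot J(\bar T)=0$ is trivial — proves precisely that this induced map $\ker\to\mathrm{coker}$, hence $\partial_s S(\bar T)$, is \emph{nondegenerate}, not that it vanishes. (In the $1$‑dimensional kernel case this is just $\sin$ on the round sphere: the singular value crosses zero transversally.) So the focal time is a \emph{simple} zero in $s$ of the scalar $q^t S(x,v,s)q$ along the focal direction, the parametric Morse lemma with its $\sqrt{\ }$‑singularity never enters, and the Glaeser‑type bookkeeping is both unnecessary and built on the wrong Taylor expansion. With the correct first‑order nondegeneracy, the Implicit Function Theorem applies directly to $\Psi(x,v,t):=q^tS(t)q$ and yields a \emph{smooth} $\tau$ near $(\bar x,\bar v)$ with $\tau(\bar x,\bar v)=\bar T$ and $\tau\ge t_f$ locally; its full second differential is controlled by $|\partial_t\Psi|^{-1}\ge\delta^{-1}$ and the $C^2$ norms of $\Psi$, hence is bounded above uniformly over $\UM$ by compactness, which is exactly the semiconcavity statement (cf.\ Definitions~\ref{Llip} and the remark after Theorem~\ref{focsemi}).

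This is what the paper does, modulo the choice of linear algebra: instead of a Schur complement of $J^0_1$, it realizes the Lagrangian plane $L_{t,v}$ of initial data focusing at time $t$ as a graph $w\mapsto(S(t)w,w)$ over a rotated Lagrangian splitting $E(0)\oplus F(0)$, so that $S(t)$ is automatically symmetric $n\times n$ and the symplectic invariance gives $\langle\dot S(t)w,w\rangle=-|z|^2<0$ (Lemma~\ref{tfjacobi4}(2)), i.e.\ the first‑order nondegeneracy comes for free. Your Schur‑complement set‑up can be pushed through to the same conclusion once the first‑order (not second‑order) vanishing is recognized, but the symplectic graph makes the symmetry and sign‑definiteness of $\dot S$ transparent. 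Your closing alternative — realizing $-t_f$ as the value function of a Mayer problem for the Riccati flow and citing \cite{cr10} — is indeed a legitimate route and is the reference the paper itself points to, but the main argument as written has the error above.
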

The definition of semiconcavity is similar as the definition \ref{Llip}, where we ask $\tau$ to be semiconcave instead of Lipschitz continuous.

\begin{Rk}
The first item of Theorem \ref{cutlip} is a result due to Li-Nirenberg, Itoh-Tanaka, and Castelpietra-Rifford \cite{it01,ln05,cr10}, while the second and third ones are new.
\end{Rk}

\subsection{Proof of Theorem \ref{foclip}: Lipschitz continuity of the tangent focal loci}
The proof uses the Hamiltonian structure hidden in the Jacobi field equation. It is based on the one given in the paper of Castelpietra and Rifford \cite{cr10}, the main difference is that we adopt here a Lagrangian point of view whereas Castelpietra and Rifford used an Hamiltonian point of view.
\subsubsection{Focalization and Jacobi fields}
Let $(x,v) \in TM $, and consider the geodesic path $\gamma_0:$  $t\in \R^+ \mapsto \exp_x\left( t v \right)$. 
We choose an orthonormal basis of $T_xM$ given by $\left(v,e_2,...,e_i,...,e_n\right)$ and define by parallel transport an orthonormal basis of $T_{\exp_x(tv)}M$:
 \[B(t)=\left(e_1(t),e_2(t),...,e_i(t),...,e_n(t)\right).\] 
We identify $T_{\exp_x(tv)}M$ with $\R^n $ thanks to the basis $B(t)$. 
By definition the Jacobi field equation along $\gamma_0$ is given by  \cite{GHL,Sakai}
\begin{align}
\label{jacobi} &\ddot J(t) + R(t)J(t)=0, \quad t\in \R^+,\\
\nonumber & J(0)=h, \quad h \in T_xM,\\
\nonumber &\dot J(0)=p, \quad p\in T_xM,
\end{align}
where $R(t)$ is the symmetric operator given, in the basis $B(t)$, by $R(t)_{ij}=\left\langle  R(e_i,e_j)e_i,e_j \right\rangle$, where $R$ is the Riemann tensor.
The Jacobi fields describe how a small perturbation of the geodesic path evolves along it. Since a focal point is related to the size of the neighborhood one can ``visit'' by perturbing the geodesic path, one can understand that both notions are linked.  
The Jacobi field equation \eqref{jacobi} is a linear equation of order two, hence we define $J_0^1: t \mapsto  M_n\left(\R \right) $ as the solution of the following matricial Jacobi field equation
\begin{align}
\nonumber &\ddot J(t) + R(t)J(t)=0, \quad t\in \R^+,\\
\nonumber & J(0)=I_{n},\\
\nonumber &\dot J(0)=0.
\end{align}
We similarly define $J_1^0$ as the solution of
\begin{align}
\nonumber &\ddot J(t) + R(t)J(t)=0, \quad t\in \R^+,\\
\nonumber & J(0)=0,\\
\nonumber &\dot J(0)=I_{n}.
\end{align}
Any solution $J$ of the Jacobi field equation \eqref{jacobi} can be written for any $t \in \R^+$
\begin{equation}\label{jacobiegalite}
J(t)= J_0^1(t) J(0) +  J_1^0(t) \dot J(0).
\end{equation}
Let us now exhibit two very particular families of Jacobi fields. For any $h\in T_xM$ we define the path
\begin{align}
\label{ja1g} \gamma_{\alpha}(s,t)&= \exp_{\exp_x(sh)}\left(tv\right), \quad &(s,t)\in [0,1]\times \R^+,\\
\label{ja2g} \gamma_{\beta}(s,t) &= \exp_x\left(t(v+sh)\right), \quad &(s,t)\in [0,1]\times \R^+.
\end{align} 

It leads to the following families of Jacobi fields
\begin{align}
\label{ja1} J_{\alpha}(t) &:= \left. \frac{d}{ds} \right|_{s=0}  \gamma_{\alpha}(s,t)= \left( d_{x}\exp_{\cdot}  (tv) \right) \cdot (h), \\
\label{ja2} J_{\beta}(t) &:= \left. \frac{d}{ds} \right|_{s=0} \gamma_{\beta}(s,t)= \left( d_{p=tv} \exp_x  \right) \cdot (th) .
\end{align} 
Notice that the Jacobi field $J_{\beta}$ is nothing but $J_1^0(\cdot) h$, since $J_{\beta}(0)=0$ and $\dot J_{\beta}(0)=h$. Analogously the Jacobi field $J_{\alpha}$ is equal to $J_0^1(\cdot) h$.
The link with focalization is enclosed in the following lemma.
\begin{Lem}\label{tflienavecjacobi}
Let $(x,v) \in U_xM$ then
\begin{equation}
t_f(x,v) = \inf \left\lbrace t \in \R^+\,   |\, \exists \,q \in U_xM \mbox{ with }  J_1^0(t)q=0.   \right\rbrace
\end{equation}
The direction $q$ is called a focal direction at $(x,v)$.
\end{Lem}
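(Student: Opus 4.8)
The plan is to reduce the statement to the classical Jacobi characterization of focal points, rephrased in the matrix notation set up above. First I would recall that, by definition of the tangent focal domain, $t_f(x,v)=\sup\{t\geq 0 : tv\in\NF(x)\}$; equivalently, $t_f(x,v)$ is the infimum of those $t>0$ at which the differential $d_{tv}\exp_x:T_xM\to T_{\exp_x(tv)}M$ fails to be invertible, with the convention $\inf\emptyset=+\infty$. Since $d_0\exp_x=\mathrm{Id}$, the map $d_{tv}\exp_x$ is an isomorphism for $t$ small, so this infimum is strictly positive and the constraint ``$t\in\R^+$'' versus ``$t>0$'' is immaterial.

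Next I would make precise the link between $d_{tv}\exp_x$ and the Jacobi field $J_\beta$. Differentiating the path $\gamma_\beta$ from \eqref{ja2g} in $s$ at $s=0$ and using the chain rule gives, for every $h\in T_xM$,
\[
J_\beta(t)=d_{tv}\exp_x\cdot(th)=t\,d_{tv}\exp_x(h),
\]
as already recorded below \eqref{ja2}. Hence, for $t>0$, one has $d_{tv}\exp_x(h)=0$ if and only if the Jacobi field with $J_\beta(0)=0$, $\dot J_\beta(0)=h$ vanishes at time $t$. Since, as observed right after \eqref{ja2}, $J_\beta=J_1^0(\cdot)\,h$, this means $d_{tv}\exp_x(h)=0$ if and only if $J_1^0(t)\,h=0$. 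Consequently $d_{tv}\exp_x$ is singular precisely when the matrix $J_1^0(t)$ has nontrivial kernel, i.e.\ precisely when there is $q\in T_xM\setminus\{0\}$ with $J_1^0(t)q=0$; by homogeneity of the linear equation, $q$ may be normalized to lie in $U_xM$.

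Combining the two steps yields
\[
t_f(x,v)=\inf\bigl\{t>0 : d_{tv}\exp_x \text{ singular}\bigr\}
=\inf\bigl\{t\in\R^+ : \exists\,q\in U_xM,\ J_1^0(t)q=0\bigr\},
\]
which is the asserted formula (the value $t=0$ contributes nothing because $J_1^0(0)=I_n$ is invertible). I do not anticipate any real difficulty: the only points needing a word of care are the identity $d_0\exp_x=\mathrm{Id}$, used to exclude $t=0$ and to guarantee $t_f(x,v)>0$, and the elementary passage from ``$J_1^0(t)$ has nontrivial kernel'' to ``its kernel meets $U_xM$''. If desired, one may add that $t\mapsto\det J_1^0(t)$ is continuous, so the set of singular times is closed and the infimum is attained whenever it is finite, which matches the usual notion of the first focal (conjugate) time along $\gamma_0$.
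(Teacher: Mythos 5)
Your proof is correct and follows the same route as the paper, which simply observes that by \eqref{ja2} one has $J_1^0(t)h=(d_{tv}\exp_x)(th)$ for $t>0$, so that singularity of $d_{tv}\exp_x$ is equivalent to $J_1^0(t)$ having nontrivial kernel. The extra care you take with the case $t=0$ and the normalization of $q$ to $U_xM$ is fine but not needed beyond what the paper's one-line argument already conveys.
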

\begin{proof}
The proof is a direct consequence of \eqref{ja2}: for any $t>0$, $J_1^0(t) h=\left( d_{p=tv} \exp_x  \right) \cdot (th)$.
\end{proof}

\subsubsection{Proof of Theorem \ref{foclip} }
We start with some remarks on the symplectic structure coming with a Riemannian manifold.
\begin{Def}[The symplectic form]\label{formesymplectique}
Let M be a Riemannian manifold of dimension $n$. For any $x \in M$ we fix a base $\cal{B}$ of $\left( T_xM \times T_xM \right)$, and we define the symplectic form $\sigma$ as
\begin{align*}
\sigma: \quad \left( T_xM \times T_xM \right)^2 &\to \R,\\
(h,q)_{\cal{B}},(h',q')_{\cal{B}} & \mapsto \left\langle h, q' \right\rangle-\left\langle h', q \right\rangle = (h,q)^t \, \J \, (h',q'),
\end{align*}
where the matrix $\J=\left( 
\begin{matrix}
0& I_n \\
-I_n &0
\end{matrix}
\right)$.
A change of coordinates given by a matrix $P$ is symplectic if $P^t \J P= J$. In this case in the new base $\cal{B}'$ we  have 
\begin{align*}
\sigma: \quad \left( T_xM \times T_xM \right)^2 &\to \R,\\
(a,b)_{\cal{B'}},(a',b')_{\cal{B'}} & \mapsto \left\langle a, b' \right\rangle-\left\langle a', b \right\rangle = (a,b)^tP^t \, \J \,P (a',b'),
\end{align*}
\end{Def}
\begin{Def}[Lagrangian subspace.]\label{espaceLagrangian}
A subspace $L \in T_xM \times T_xM$ is said to be Lagrangian if $\dim(L)=n$ and $\left. \sigma \right|_{L\times L}$ is equal to $0$.
\end{Def}
For example the vertical subspace $\left\lbrace 0 \right\rbrace \times T_xM  \subset T_xM\times T_xM $ and the horizontal subspace $T_xM \times\left\lbrace 0 \right\rbrace  \subset T_xM\times T_xM $ are Lagrangian. The matrix $J_1^0 $ and $J_0^1 $ are the fundamental solutions of the Jacobi field  equation \eqref{jacobi} on those subspaces.
\begin{Lem}\label{represetationespacelagrangien}
Let $L$ be a Lagrangian subspace and $E,F$ be two vectorial spaces of dimension $n$ such that $E \stackrel{\perp}{\oplus} F=T_xM\times T_xM$ and the change of coordinates matrix is symplectic. Suppose that $L\cap E \times \left\lbrace 0 \right\rbrace = \left\lbrace 0 \right\rbrace$. Then there exist a symmetric matrix $S$ such that 
\[
L=\left\lbrace  \left( Sq ,q \right)_{E,F}\,|\, q\in F \right\rbrace.
\]
We say that $L$ is a graph above $F$.
\end{Lem}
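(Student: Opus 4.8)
The plan is to translate the two hypotheses — that $L$ is Lagrangian and that $L$ meets $E\times\{0\}$ only at the origin — into a concrete parametrization of $L$ and then use the vanishing of $\sigma|_{L\times L}$ to force symmetry of the representing matrix. First I would use the transversality hypothesis $L\cap(E\times\{0\})=\{0\}$: writing every element of $T_xM\times T_xM$ in the splitting $E\oplus F$ as a pair $(a,b)_{E,F}$ with $a\in E$, $b\in F$, consider the linear projection $\pi_F:L\to F$ obtained by discarding the $E$-component. Its kernel is exactly $L\cap(E\times\{0\})=\{0\}$, so $\pi_F$ is injective; since $\dim L=n=\dim F$, it is an isomorphism. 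Hence for each $q\in F$ there is a unique $a(q)\in E$ with $(a(q),q)_{E,F}\in L$, and $q\mapsto a(q)$ is linear, i.e. $a(q)=Sq$ for a (uniquely determined) matrix $S:F\to E$. This already gives $L=\{(Sq,q)_{E,F}\mid q\in F\}$; it remains only to prove $S$ is symmetric.

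For the symmetry, I would evaluate the symplectic form on two arbitrary elements of $L$. Because the change of coordinates from $\mathcal B$ to the $E,F$-basis is symplectic, the matrix $\J$ retains its standard block form in the new coordinates, so for $v=(Sq,q)$ and $v'=(Sq',q')$ in $L$ we have
$$
\sigma(v,v') = \langle Sq, q'\rangle - \langle Sq', q\rangle = \langle Sq,q'\rangle - \langle q, Sq'\rangle = \langle (S - S^t)q, q'\rangle.
$$
Since $L$ is Lagrangian this vanishes for all $q,q'\in F$, hence $S-S^t=0$, i.e. $S$ is symmetric. This completes the argument.

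I do not expect a serious obstacle here: the only point requiring a little care is bookkeeping with bases — one must make sure that the hypothesis "the change of coordinates matrix is symplectic" is genuinely used, precisely so that $\sigma$ is still represented by $\J$ in the $E\oplus F$ coordinates and the computation above is legitimate; without that hypothesis $\sigma$ would be represented by $P^t\J P$ and the clean cancellation would fail. A secondary routine check is that the $E,F$-coordinates are compatible with the inner products appearing in $\langle\cdot,\cdot\rangle$, which is exactly what the orthogonality $E\stackrel{\perp}{\oplus}F=T_xM\times T_xM$ in the statement provides. Everything else is linear algebra of the standard "Lagrangian subspace = graph of a symmetric matrix" type.
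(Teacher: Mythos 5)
Your proof is correct and follows essentially the same route as the paper: existence of $S$ via the transversality and dimension count, then symmetry from $\sigma|_{L\times L}=0$ together with the fact that the symplectic change of coordinates keeps $\sigma$ in the standard $\J$ form. The paper states the existence step more tersely, but the argument is the same.
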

\begin{proof}
The matrix $S$ exists since $L$ has dimension $n$ and no direction in $E$. 
To see that $S$ is symmetric we look at the symplectic form on two  vectors of $L$: let $q,q'\in F$.
Then by definition
\begin{align*}
0&=\sigma \left( \left(Sq,q \right),\left(Sq',q' \right) \right)\\
&=\left\langle Sq, q'\right\rangle-\left\langle Sq', q\right\rangle\\
&=\left\langle Sq, q'\right\rangle-\left\langle q, Sq'\right\rangle.
\end{align*}
\end{proof}

An important link between the symplectic form and the Jacobi field is that the symplectic form is preserved along the flow of the Jacobi  field equation.

\begin{Lem}\label{flowsymplectic}
Let $J_1$ and $J_2$ be two solution of the Jacobi field equation \eqref{jacobi}. Then for any $t>0$ 
\[
\sigma\left( \left(J_1(t)  ,\dot J_1(t)   \right),\left(J_2(t)  ,\dot J_2(t)   \right) \right)=\sigma\left( \left(J_1(0) ,\dot J_1(0)  \right),\left(J_2(0) ,\dot J_2(0)  \right) \right).
\]
Equivalently, defining $M(t) = \left( \begin{matrix}
J_1(t) &  J_2(t) \\
\dot J_1(t) & \dot J_2(t)
\end{matrix} \right)$ we have $M^t(t) \J M(t) =\J$. In this case we say that $M(t)$ is symplectic.
\end{Lem}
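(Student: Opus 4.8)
The plan is to reduce everything to showing that the scalar quantity $w(t):=\sigma\big((J_1(t),\dot J_1(t)),(J_2(t),\dot J_2(t))\big)$ is independent of $t$; evaluating at $t=0$ then gives the stated equality, and the matrix statement is just the same fact written for the columns of $M$. Working in the parallel orthonormal frame $B(t)$, so that $J_i(t)$, $\dot J_i(t)$ and $R(t)$ are all expressed in $\R^n$ and $R(t)$ is a symmetric operator, recall from Definition \ref{formesymplectique} that $\sigma\big((h,q),(h',q')\big)=\langle h,q'\rangle-\langle h',q\rangle$, hence $w(t)=\langle J_1(t),\dot J_2(t)\rangle-\langle J_2(t),\dot J_1(t)\rangle$ (read entrywise if the $J_i$ are matrix-valued).

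First I would simply differentiate: $\dot w(t)=\langle\dot J_1,\dot J_2\rangle+\langle J_1,\ddot J_2\rangle-\langle\dot J_2,\dot J_1\rangle-\langle J_2,\ddot J_1\rangle=\langle J_1,\ddot J_2\rangle-\langle J_2,\ddot J_1\rangle$. Then I would substitute the Jacobi equation \eqref{jacobi} in the form $\ddot J_i=-R(t)J_i$, which gives $\dot w(t)=-\langle J_1,R(t)J_2\rangle+\langle J_2,R(t)J_1\rangle$, and this vanishes precisely because $R(t)$ is symmetric in the frame $B(t)$. Therefore $w\equiv w(0)$, which is exactly the asserted invariance of $\sigma$ along the Jacobi flow.

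For the equivalent matrix formulation I would rewrite the Jacobi equation as the first–order linear system $\dot X=A(t)X$ with $X=(J,\dot J)^{t}$ and companion matrix $A(t)=\begin{pmatrix}0&I_n\\-R(t)&0\end{pmatrix}$, so that $M(t)$ solves $\dot M=A(t)M$. Then $\frac{d}{dt}\big(M^{t}(t)\,\J\,M(t)\big)=M^{t}\big(A^{t}\J+\J A\big)M$, and a direct $2\times 2$ block computation using $R(t)^{t}=R(t)$ shows $A^{t}\J=\begin{pmatrix}R&0\\0&I_n\end{pmatrix}=-\J A$, i.e. $A^{t}\J+\J A=0$ (equivalently, $\J A$ is symmetric, which is the infinitesimal version of symplecticity). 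Hence $M^{t}(t)\,\J\,M(t)$ is constant in $t$; since this matrix is nothing but the array of pairwise symplectic products of the columns of $M(t)$, its constancy is the same statement as the invariance of $\sigma$, and its value equals $\J$ whenever $M(0)$ is symplectic — in particular for the fundamental solution, where $M(0)=I_{2n}$.

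I do not expect a genuine obstacle here: the argument is a one-line differentiation. The only points needing care are the sign bookkeeping in the conventions for $\J$ and for $\sigma$, and flagging clearly that symmetry of the curvature operator $R(t)$ (in the parallel frame) is exactly what makes the Jacobi flow symplectic; everything else is routine.
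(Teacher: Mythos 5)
Your proof is correct, and it is the standard argument: the paper states this lemma without proof (it is the classical fact that the Jacobi flow is the linearization of a Hamiltonian flow and hence symplectic), so there is no competing proof in the text to compare against. Differentiating $w(t)=\langle J_1,\dot J_2\rangle-\langle J_2,\dot J_1\rangle$ in the parallel frame and invoking the symmetry of $R(t)$ is exactly what is needed, and your block computation $A^{t}\J+\J A=0$ for the companion matrix is the correct matrix-level restatement. You are also right to flag that the conclusion $M^{t}(t)\,\J\,M(t)=\J$ (rather than merely constancy of $M^{t}\J M$) requires $M(0)$ to be symplectic; the paper's ``equivalently'' silently assumes this, and it does hold in the only case the lemma is used, namely the fundamental solution with $M(0)=I_{2n}$.
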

We now define a particular Lagrangian subspace in order to find a new formulation for $t_f$.
\begin{Def}\label{Lagrangianlelongduflot}
Let $(x,v)\in UM$. We define:
\begin{itemize}
\item the horizontal subspace at $\exp_x(tv)$: \[H_{t,v}:= T_{exp_x(tv)M}\times \left\lbrace 0\right\rbrace  \subset T_{exp_x(tv)M} \times T_{exp_x(tv)M}.\]
\item  the vertical subspace at $\exp_x(tv)$: \[V_{t,v}:=\left\lbrace 0\right\rbrace \times T_{exp_x(tv)M} \subset T_{exp_x(tv)M} \times T_{exp_x(tv)M}.\]
\item the subspace $L_{t,v}$ of initial conditions such that at time $t$ the Jacobi field is equal to $0$:
\[
L_{t,v}:= \left\lbrace  (h,q)\in T_xM\times T_xM\,|\, J_0^1(t)h+ J_1^0(t)q = 0  \right\rbrace.
\]
\end{itemize}
\end{Def}
Notice that $L_{t,v}$ can be equivalently defined as
$L_{t,v}= M^{-1}(t)V_{t,v}$
where  $$M(t) := \left( \begin{matrix}
J_0^1(t) &   J_1^0(t) \\
\dot J_0^1(t) & \ J_1^0(t)
\end{matrix} \right).$$

\begin{Prop}
The space $L_{t,v}$ is a Lagrangian subspace of $T_xM \times T_xM$. 
\end{Prop}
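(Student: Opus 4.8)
The plan is to exhibit $L_{t,v}$ as the image of the vertical (hence Lagrangian) subspace $V_{t,v}$ under a symplectic linear map, and then invoke the general fact that symplectic maps carry Lagrangian subspaces to Lagrangian subspaces. Concretely, I would first record the definition $L_{t,v}=M(t)^{-1}V_{t,v}$ given just above the statement, where $M(t)$ is the fundamental matrix solution of the matricial Jacobi field equation built from $J_0^1, J_1^0$ and their derivatives. Note $M(0)=I_{2n}$, so $M(0)^t\J M(0)=\J$, and by Lemma \ref{flowsymplectic} (conservation of the symplectic form along the Jacobi flow) $M(t)^t\J M(t)=\J$ for all $t\geq 0$; that is, $M(t)$ is symplectic for every $t$. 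Since the inverse of a symplectic matrix is symplectic (from $P^t\J P=\J$ one gets $P^{-1}\J(P^{-1})^t=\J$, equivalently $(P^{-1})^t\J P^{-1}=\J$ after transposing and using $\J^t=-\J=\J^{-1}$), the map $M(t)^{-1}$ is symplectic as well.

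The second ingredient is that a symplectic linear map sends Lagrangian subspaces to Lagrangian subspaces. I would state and use this: if $P$ is symplectic and $L'$ is Lagrangian, then $\dim(PL')=\dim L'=n$, and for $u,u'\in L'$ we have $\sigma(Pu,Pu')=(Pu)^t\J(Pu')=u^t(P^t\J P)u'=u^t\J u'=\sigma(u,u')=0$, so $\sigma|_{PL'\times PL'}\equiv 0$. Applying this with $P=M(t)^{-1}$ and $L'=V_{t,v}=\{0\}\times T_xM$ — which is Lagrangian, as already observed in the excerpt for the vertical subspace — yields immediately that $L_{t,v}=M(t)^{-1}V_{t,v}$ is a Lagrangian subspace of $T_xM\times T_xM$.

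I would also want to double-check that the alternative description of $L_{t,v}$ as $\{(h,q): J_0^1(t)h+J_1^0(t)q=0\}$ matches $M(t)^{-1}V_{t,v}$: a pair $(h,q)$ lies in the latter iff $M(t)(h,q)^t\in V_{t,v}$, i.e. the top block of $M(t)(h,q)^t$ vanishes, which is exactly $J_0^1(t)h+J_1^0(t)q=0$. This is a one-line verification but worth including so the proof is self-contained relative to Definition \ref{Lagrangianlelongduflot}.

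There is no real obstacle here; the only mild subtlety is the purely algebraic step that symplectic matrices are closed under inversion (equivalently, that $M(t)^{-1}$ preserves $\sigma$), and one should be a little careful with the identity $\J^{-1}=\J^t=-\J$ when manipulating $P^t\J P=\J$ into $(P^{-1})^t\J P^{-1}=\J$. Everything else is a direct citation of Lemma \ref{flowsymplectic} together with the already-noted fact that $V_{t,v}$ is Lagrangian, so the proof should be only a few lines long.
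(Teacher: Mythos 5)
Your argument is correct and is essentially the paper's proof, merely reorganized: the paper propagates two vectors of $L_{t,v}$ forward along the Jacobi flow and evaluates $\sigma$ at time $t$ (where both land in the vertical subspace, giving $\sigma=0$), which is precisely the instance of your abstract lemma that symplectic maps carry Lagrangian subspaces to Lagrangian subspaces, applied to $M(t)^{-1}$ and $V_{t,v}$. Both use Lemma \ref{flowsymplectic} for symplecticity of $M(t)$ and the invertibility of $M(t)$ to get $\dim L_{t,v}=n$, so the content is identical.
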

\begin{proof}
Since $M^t(t) \J M(t) =\J$ the matrix $M(t)$ is invertible, therefore $ L_{t,v}$ is a vectorial subspace of dimension $n$.

To see that it is Lagrangian we use that $\sigma$ is preserved along the flow: let $(h,q),(h',q')\in L_{t,v}$, and denote by $J_{h,q}$ the solution of the Jacobi field equation \eqref{jacobi} with $J_{h,q}(0)=h$ and $\dot J_{h,q}(0)=q$. Then, for any $t>0$,
\begin{align*}
\sigma \left( (h,q), (h',q') \right) &= \sigma \left( (J_{h,q}(t),\dot J_{h,q}(t)), (J_{h',q'}(t),\dot J_{h',q'}(t)) \right) \\
& = \sigma \left( (0,\dot J_{h,q}(t)), (0,\dot J_{h',q'}(t)) \right)=0.
\end{align*}
\end{proof}
We can now give a new formulation of Lemma \ref{tflienavecjacobi}.
\begin{Lem}\label{tflienavecjacobi3}
Let $(x,v) \in U_xM$. Then
\begin{equation}
t_f(x,v) = \inf \left\lbrace t \in \R^+ \,|\,  L_{t,v}\cap V_{0,v}\neq \left\lbrace 0 \right\rbrace    \right\rbrace.
\end{equation}
The set  $L_{t,v}\cap V_{0,v}$ is called the focal set at $(x,v)$.
\end{Lem}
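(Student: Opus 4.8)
The plan is to obtain Lemma~\ref{tflienavecjacobi3} as a direct translation of Lemma~\ref{tflienavecjacobi}, by unwinding the definitions of $L_{t,v}$ and $V_{0,v}$. First I would record the geometric meaning of the two subspaces: by the superposition formula \eqref{jacobiegalite}, for a solution $J$ of \eqref{jacobi} with $J(0)=h$, $\dot J(0)=q$ one has $J(t)=J_0^1(t)h+J_1^0(t)q$, so $L_{t,v}$ is exactly the space of initial data $(h,q)$ of Jacobi fields vanishing at time $t$; on the other hand $V_{0,v}=\{0\}\times T_xM$ is the space of initial data of Jacobi fields vanishing at time $0$. Hence $L_{t,v}\cap V_{0,v}$ consists of the initial data of Jacobi fields vanishing both at $0$ and at $t$, i.e. the classical notion of conjugacy/focalization along the geodesic $\gamma_0$.

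Concretely, if $(h,q)\in L_{t,v}\cap V_{0,v}$ then $h=0$ (since $(h,q)\in V_{0,v}$), and the condition $(0,q)\in L_{t,v}$ becomes $J_0^1(t)\cdot 0+J_1^0(t)q=0$, that is $J_1^0(t)q=0$. Therefore
\[
L_{t,v}\cap V_{0,v}=\{0\}\times\Ker J_1^0(t),
\]
and this subspace is nontrivial if and only if $\Ker J_1^0(t)\neq\{0\}$. Since $J_1^0(t)$ is linear, the latter holds if and only if there is a unit vector $q\in U_xM$ with $J_1^0(t)q=0$ (normalize any nonzero element of the kernel). Consequently
\[
\bigl\{t\in\R^+ : L_{t,v}\cap V_{0,v}\neq\{0\}\bigr\}=\bigl\{t\in\R^+ : \exists\,q\in U_xM,\ J_1^0(t)q=0\bigr\},
\]
and taking infima, Lemma~\ref{tflienavecjacobi} gives $t_f(x,v)=\inf\{t\in\R^+\,|\,L_{t,v}\cap V_{0,v}\neq\{0\}\}$, which is the claim.

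The argument is purely a matter of rewriting definitions, so I do not expect any genuine obstacle; the only points deserving a line of care are the consistency of the explicit description of $L_{t,v}$ with the identity $L_{t,v}=M^{-1}(t)V_{t,v}$ recorded after Definition~\ref{Lagrangianlelongduflot} (both come from \eqref{jacobiegalite} together with the invertibility of $M(t)$), and the elementary homogeneity step allowing one to pass from a nonzero focal direction to a unit one. The value of the reformulation lies not in its proof but in what it enables afterwards: it presents $t_f(x,v)$ as the first time the moving Lagrangian plane $L_{t,v}$ meets the fixed Lagrangian plane $V_{0,v}$, which is exactly the setup exploited in the proof of Theorem~\ref{foclip}.
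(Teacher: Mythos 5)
Your proposal is correct and follows essentially the same route as the paper's own proof, which likewise identifies elements of $L_{t,v}\cap V_{0,v}$ with pairs $(0,q)$ satisfying $J_1^0(t)q=0$ and then invokes Lemma~\ref{tflienavecjacobi}. Your write-up is just a more detailed unwinding of the same definitional translation.
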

\begin{proof}
Let $q \in U_xM\setminus\{ 0\}$ satisfy $(0,q) \in L_{t,v}\cap V_{0,v}$. Then $J_{0,q}(t)=J_1^0(t)q=0$ and Lemma \ref{tflienavecjacobi} concludes the proof.
\end{proof}

We recall that we identify $T_{\exp_x(tv)}M$ with $\R^n$ through the basis \[B(t)= \left( e_1(t),...,e_i(t),...,e_n(t) \right).\]
According to Lemma \ref{represetationespacelagrangien} the obstruction to see $L_{t_f(x,v),v}$ as a graph above $V_{0,v}$ comes from the intersection of $L_{t_f(x,v),v}$ with the horizontal space. By definition we have 
\[
L_{ t_f (x,v),v}\cap H_{0,v}=\Ker\, J_0^1(t_f(x,v)).
\]
Let us identify, for any $u \geq 0$, $H_{u,v}$ with $\vect \left( e_1'(u),...,e_i'(u),...,e_n'(u) \right)$
and $V_{u,v}$ with
 $\vect \left( f_1(u),...,f_i(u),...,f_n(u) \right),$ where $e'_i(u)=e_i(u) \times \left\lbrace 0 \right\rbrace \in T_{\exp_x(uv)}\times T_{\exp_x(uv)}$
and $f_i(u)=  \left\lbrace 0 \right\rbrace \times e_i(u)  \in T_{\exp_x(uv)}\times T_{\exp_x(uv)}$.
With this notation, without loss of generality we can suppose there exists an index $l>1$ such that $\Ker \,J_0^1(t_f(x,v))=\vect \left( e'_l,...e'_n \right)$.
Therefore, for any $i\geq l$ we can change $e_i'(u)$ by $f_i(u)$ and $f_i(u)$ by $-e_i'(u)$ to get two new orthonormal spaces of dimension $n$:
\begin{align*}
E(u)=\vect \left( e_1'(u),...,e_{l-1}'(u),f_l(u),...,f_n(u) \right)\\
F(u)=\vect \left( f_1(u),...,f_{l-1}(u),-e_l'(u),...,-e_n'(u) \right).
\end{align*}
\begin{Rk}
The change of coordinates is symplectic, that is $P^t \J P= J$, where $P$ is the change of basis matrix. Therefore for any $(z,w),(z',w')\in E \times F$ we have 
\[\sigma\left( (z,w),(z',w') \right)=\left\langle z, w'\right\rangle-\left\langle z', w\right\rangle.\]
\end{Rk}
By construction for any $u\geq 0$ we have 
\begin{enumerate}
\item $E(u) \stackrel{\perp}{\oplus}F(u) = T_{\exp_x(uv)}\times T_{\exp_x(uv)}$
\item $L_{ t_f (x,v),v}\cap E(0)=\left\lbrace 0 \right\rbrace$.
\end{enumerate}
Since $L_{ u,v'}$ is smooth with respect to $(x',v',u)$, there exist a neighbourhood $O_{x,v,t_f(x,v)}\subset TM\times \R^+$  of $\left(x,v,t_f(x,v)\right)$ such that, for any $\left(x',v',t\right)\in  O_{x,v,t_f(x,v)}$, we have
\begin{equation}\label{orthogonal}
L_{t,v'}\cap E(0)=\left\lbrace 0 \right\rbrace.
\end{equation} 
Moreover Lemma \ref{represetationespacelagrangien} implies that there exist a smooth function
\begin{align*}
S:\quad O_{x,v,t_f(x,v)} &\to S_n\left( \R \right)\\
   \left(x',v',t\right)  &\mapsto S(t),
\end{align*}
such that, for any $w\in F(0)$, we have $S(t)w \in E(0)$ and 
\begin{equation*}
L_{t,v'}= \left\lbrace \left( S(t)w , w\right)_{E(0) \times F(0)} \mbox{ with } w\in F(0)  \right\rbrace.
\end{equation*}
\begin{Rk}
Notice that the matrix $S(t)$ as well as the subspaces $E(u)$ and $F(u)$
depend on $(x',v',t)$, but the indices $l$ used to define  $E(u)$ and $F(u)$ for any $(x',v',t) \in O_{x,v,t_f(x,v)}$ only depend on $x,v,t_f(x,v)$.
\end{Rk}
The following lemma is the key tool to apply later the Implicit Function Theorem.
\begin{Lem}\label{tfjacobi4}
let $(x,v)\in TM$. 
\begin{enumerate}
\item Let $q\in U_xM$ satisfy $(\left\lbrace 0 \right\rbrace,q)\in L_{ t_f (x,v),v}\cap V_{0,v} $. Then $q \in F(0)$ and $q^t\, S(t_f (x,v)) q=0$.
\item There exists $\delta >0$ such that, for any $(x,v)\in TM$,  $|| \dot S(t_f(v)) ||\geq \delta$.
\item For any $(x',v',t) \in O_{x,v,t_f(x,v)}$, if $q^t\, S(t) q=0$ then $t_f(v')\leq t$.

\end{enumerate}
\end{Lem}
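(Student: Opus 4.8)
The plan is to deduce all three statements from the symplectic formalism above: part (1) is purely algebraic, while (2)--(3) rest on the explicit description of $S$ in terms of the fundamental Jacobi matrices $J_0^1,J_1^0$ together with uniqueness for the Jacobi equation. For part (1) I would take $q\in U_xM$ with $(0,q)\in L_{t_f(x,v),v}\cap V_{0,v}$, so $J_1^0\bigl(t_f(x,v)\bigr)q=0$ by definition of $L_{t_f(x,v),v}$. Since $(h,0)\in L_{t_f(x,v),v}$ precisely when $h\in\Ker J_0^1\bigl(t_f(x,v)\bigr)$, and $L_{t_f(x,v),v}$ is Lagrangian, every such $h$ satisfies $0=\sigma\bigl((0,q),(h,0)\bigr)=-\langle h,q\rangle$; hence $q\perp\Ker J_0^1\bigl(t_f(x,v)\bigr)=\vect(e_l,\dots,e_n)$, which by the definition of $F(0)$ is exactly the statement $(0,q)\in F(0)$. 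As $(0,q)$ also lies in $L_{t_f(x,v),v}$, the graph of $S\bigl(t_f(x,v)\bigr)$ over $F(0)$, and an element of such a graph belongs to $F(0)$ iff its $E(0)$--component vanishes, one gets $S\bigl(t_f(x,v)\bigr)q=0$ (with $q$ read as coordinate vector in the orthonormal basis of $F(0)$), in particular $q^{t}S\bigl(t_f(x,v)\bigr)q=0$.

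For part (2) the key point is to bound $q^{t}\dot S\bigl(t_f(x,v)\bigr)q$ away from zero, which suffices since $\|\dot S(t_f(x,v))\|\ge|q^{t}\dot S(t_f(x,v))q|$. In the generic case $\Ker J_0^1(t_f(x,v))=\{0\}$ no change of frame is required and $S(t)=-\bigl(J_0^1(t)\bigr)^{-1}J_1^0(t)$ near $t_f(x,v)$; differentiating and using $J_1^0(t_f(x,v))q=0$ leaves $\dot S(t_f(x,v))q=-\bigl(J_0^1(t_f(x,v))\bigr)^{-1}\dot J_1^0(t_f(x,v))q$, and the Wronskian identity $\dot J_0^1(t)^{t}J_1^0(t)-J_0^1(t)^{t}\dot J_1^0(t)\equiv -I_n$ (a consequence of Lemma \ref{flowsymplectic}), applied to $q$ at $t_f(x,v)$, gives $J_0^1(t_f(x,v))^{t}\dot J_1^0(t_f(x,v))q=q$ and therefore
$$
q^{t}\dot S(t_f(x,v))q=-\bigl|\dot J_1^0(t_f(x,v))q\bigr|^{2}<0,
$$
where the strict inequality holds because $\dot J_1^0(t_f(x,v))q\neq0$: otherwise the Jacobi field $t\mapsto J_1^0(t)q$ would vanish together with its derivative at $t_f(x,v)$, hence identically, contradicting $\dot J_1^0(0)=I_n$ and $q\neq0$. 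The uniform lower bound over $UM$ then follows from compactness and continuity of the data along geodesics. When $\Ker J_0^1(t_f(x,v))\neq\{0\}$ I would run the same computation after conjugating by the symplectic matrix implementing the change of frame $(H_{0,v},V_{0,v})\to(E(0),F(0))$, which by construction makes $L_{t_f(x,v),v}$ a graph over $F(0)$; the $t$--derivative of the quadratic form of $S$ along the focal direction is again bounded below by $|\dot J_1^0(t_f(x,v))q|^{2}$.

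For part (3), given $(x',v',t)\in O_{x,v,t_f(x,v)}$ with $q^{t}S(x',v',t)q=0$, I would first note that $t_f(v')$ is close to $t_f(x,v)$ by continuity, and that by part (2) the function $s\mapsto q^{t}S(x',v',s)q$ is strictly decreasing near $s=t_f(x,v)$, so that $t$ is its unique nearby zero; it then remains to check that this zero is $\ge t_f(v')$. Outside the chart where the adapted frame is used, $L_{s,v'}$ is represented by $-\bigl(J_0^1(s)\bigr)^{-1}J_1^0(s)\sim-sI_n$ as $s\downarrow0$, hence negative definite; since $q^{t}S(x',v',t)q=0$ forces the representing matrix at $s=t$ to fail to be negative definite, its largest eigenvalue vanishes at some $s_0\in(0,t]$, at which point $L_{s_0,v'}$ meets $V_{0,v'}$, i.e. $s_0$ is a focal time by Lemma \ref{tflienavecjacobi3}; therefore $t_f(v')\le s_0\le t$. (Where $J_0^1$ degenerates before $t$, one replaces the matrix by its rotated counterpart and tracks the number of crossings of $L_{\cdot,v'}$ with $V_{0,v'}$ --- a Maslov/Sturm-type count --- rather than a single eigenvalue sign.)

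The hardest part should be the bookkeeping in the non-generic case $\Ker J_0^1(t_f(x,v))\neq\{0\}$, which enters both (2) and (3): one must carry the Wronskian/Riccati computation through the symplectic rotation that defines $E(0)$ and $F(0)$, checking that the focal direction stays transverse to the block representing $E(0)$ and that the sign of the crossing is preserved. Everything else --- ODE uniqueness for the Jacobi equation, the symplectic identities of Lemma \ref{flowsymplectic}, and compactness of $UM$ --- is routine.
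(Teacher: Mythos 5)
Your parts (1) and (2) are sound and rest on essentially the same mechanism as the paper. For (1), pairing $(0,q)$ symplectically against $\Ker J_0^1(t_f(x,v))\times\{0\}\subset L_{t_f(x,v),v}$ to get $q\in F(0)$, and then reading $S(t_f(x,v))q=0$ off the graph structure, is exactly the paper's (terser) argument. For (2), your Wronskian identity $(J_0^1)^t\dot J_1^0-(\dot J_0^1)^tJ_1^0=I_n$ is the content of Lemma \ref{flowsymplectic}, and your formula $q^t\dot S(t_f)q=-|\dot J_1^0(t_f)q|^2$ is the specialization of the paper's identity $\langle\dot S(t)w_t,w_t\rangle=-|z|^2$ to $z=\dot J_1^0(t_f)q$. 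The one caveat is that the paper runs this computation intrinsically with the symplectic form in the frame $(E(0),F(0))$, so the non-generic case costs nothing, whereas you only assert that your explicit matrix computation survives the conjugation; since the change of frame is symplectic this does work out, but it is the part you would actually have to write down.

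Part (3) is where you take a genuinely different route and where there is a real gap. You represent $L_{s,v'}$ as a graph over the vertical subspace, with matrix $-\left(J_0^1(s)\right)^{-1}J_1^0(s)$, and track its top eigenvalue from $-sI_n$ near $s=0$ up to $s=t$. Two steps are missing. First, this representation exists only where $J_0^1(s)$ is invertible, and nothing controls invertibility of $J_0^1$ on $(0,t)$ (degeneracies of $J_0^1$ are unrelated to focal times, which are degeneracies of $J_1^0$); your parenthetical ``Maslov/Sturm-type count'' is the right kind of repair but is only named, not executed. Second, and more seriously, the hypothesis of (3) is $q^tS(t)q=0$ for the matrix of the \emph{rotated} graph over $F(0)$, and the lemma is needed precisely when $\Ker J_0^1(t_f(x,v))\neq\{0\}$ forces that rotation; the inference ``$q^tS(t)q=0$ implies the vertical-graph matrix fails to be negative definite'' is not automatic, because definiteness is not preserved under the partial block inversion relating the two graph representations. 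That transfer is exactly the $S_i$/$K_i$ bookkeeping occupying most of the paper's proof. The paper also sidesteps your first problem by arguing by contradiction with the graph over the \emph{horizontal} subspace: its matrix $K(t)$ is defined on all of $(0,t_f(x',v'))$ by the very definition of the first focal time, is monotone by the same symplectic computation as in (2), and is shown to be uniformly very negative on the focal block near $(x,v,t_f(x,v))$, contradicting $h_1^tK_1(t')h_1=0$. To complete your version you would need either to switch to that horizontal representation or to actually carry out the crossing count together with the block-inversion identities.
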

Notice that $q$ is defined only in $T_xM$, but for any $x'$ close to $x$ we can see it also as an element of $T_{x'}M$ thanks to the identification with the coordinates. The dot always stands for the derivative along the Jacobi Field ($\frac{d}{dt} $).

\begin{proof}
Let $q\in L_{ t_f (x,v),v}\cap V_{0,v} $. Since $L_{ t_f (x,v),v}\cap H_{0,v}=\vect \left( e'_l,...e'_n \right)$, using the symplectic form $\sigma$ we find that $q_i=0$ for any $i=l,\ldots,n$. This gives that $q \in F(0)$. Moreover $S(t_f (x,v)) q \in V_{0,v} $ thus 
$\left( S(t_f (x,v)) q \right)_i=0$ for any $i=1,\ldots,l-1$. Consequently $q^t\, S(t_f (x,v)) q=0$.

To compute the derivative with respect to $t$ we again use the symplectic form. Let $(0,z)\in V_{t,v}$ for any $t$ such that $(x,v,t) \in O_{x,v,t_f(x,v)}$. There exists $\phi(t)=(h_t,q_t)=(S(t)w_t,w_t)_{E(0)\times F(0)} \in L_{t,v}$  such that $M(t)\phi(t)= (0,z)$. On one side
\begin{align*}
\sigma\left( \phi(t),\dot \phi(t) \right)&=\sigma\left( (S(t)w_t,w_t)_{E(0)\times F(0)},(\dot S(t)w_t +  S(t) \dot w_t,\dot w_t)_{E(0)\times F(0)} \right) \\
&=\sigma\left( (S(t)w_t,w_t),( S(t) \dot w_t,\dot w_t) \right)+\sigma\left( (S(t)w_t,w_t),(\dot S(t)w_t ,0) \right)\\
&=\left\langle \dot S(t)w_t,w_t\right\rangle.
\end{align*}
On the other side,
\begin{align*}
\sigma\left( \phi(t),\dot \phi(t) \right)&= \sigma\left( M(t) \phi(t),M(t) \dot \phi(t) \right).
\end{align*}
Since $ M(t)\phi(t)= (0,z) $ we have $M(t) \dot \phi(t) =-\dot M(t)\phi(t) $. Moreover 
\[\dot M(t)=\left( \begin{matrix}
0 & I_n \\
-R(t) & 0
\end{matrix} \right) ,\]
thus
\begin{align*}
\sigma\left( \phi(t),\dot \phi(t) \right)&= -\sigma\left( M(t) \phi(t),\dot M(t)\phi(t) \right) \\
&=-\sigma\left( M(t) \phi(t),\left( \begin{matrix}
0 & I_n \\
-R(t) & 0
\end{matrix} \right) M(t)\phi(t) \right) \\
&=-\sigma\left( (0,z),(-z,0) \right)=-|z|^2.
\end{align*}
Hence
\[
\left\langle \dot S(t)w_t,w_t\right\rangle=-|z|^2,
\]
and by compactness we deduce the existence of a constant $\delta>0$ such that $|| \dot S(t_f(x,v)||\geq \delta$.

For the third item we reason by contradiction: we take $(x',v',t') \in O_{x,v,t_f(x,v)}$ and suppose that $q^t\, S(t') q=0$ and $t'< t_f(x',v')$. By definition $q \in V_{0,v'}\cap F(0)$, thus $ q_i=0$ for any $i=l,\ldots,n$. Since $t'< t_f(x',v')$ the space $L_{ t' (x,v),v}$ is a graph on the horizontal space. More precisely, according to \eqref{jacobiegalite}, for any $t\in (0,t_f(x',v'))$ we have
\[
L_{ t_f (x,v),v}=\left\lbrace ( h, \left( J_1^0(t_f(x,v))\right)^{-1} J_0^1(t) h)\, |\, h\in H_{0,v} \right\rbrace.
\]
We denote $\left( J_1^0(t_f(x,v))\right)^{-1} J_0^1(t)=K(t)$. Then the exact same computation done above proves that
\[
\left\langle \dot K(t)h,h\right\rangle<0\qquad \forall\,h\in H_{0,v}.
\]
Since $t\left( J_1^0(t_f(x,v))\right)^{-1}$ converges to $I_n$ when $t$ goes to zero, we deduce that for $t$ small enough $K$ is symmetric positive definite.\\
For any $h \in H_{0,v}$ and $q'\in V_{0,v}$  we denote $h=(h_1,h_2)$, where $h_1\in  H_{0,v}\cap E(0)$, $h_2\in  H_{0,v}\cap F(0)$, and $q'=(q_1',q_2')$ with $q_1'\in  V_{0,v}\cap E(0)$ and $q_2'\in  V_{0,v}\cap F(0)$. With this notation we have $\left( (h,q')=(h_1,q_2'),(q_1',h_2)\right)_{E\times F}$ and we define
the matrices $S_i(t)$, $K_i(t)$, $i=1,\ldots,4$, such that
\[
(q_1',q_2')= \left( K_1(t)h_1+K_2(t)h_2,K_3(t)h_1+K_4(t)h_2 \right),
\]
and 
\[
(h_1,q_2')= \left( S_1(t)q_1'+S_2(t)h_2,S_3(t)q_1'+S_4(t)h_2 \right).
\]
Since by hypothesis $L_{t',v'}$ is a graph over $H_{0,v}$ and $F(0)$, we deduce that $S_1(t')=K^{-1}_1(t')$ and in particular we see that $K_1(t')$ is invertible. In the focal direction $q\in F(0)\cap V_{0,v'}$ we have $q=(q_1,0)_{F(0)}$ and
\[
0=q^t\, S(t') \, q= q_1^t\, S_1(t') \, q_1 = h_1^t\, K_1(t') \, h_1,
\]
where $h_1(t)=K^{-1}_1(t)q_1$. To get a contradiction we just have to remark that, for any $A>0$, up to taking $O_{x,v,t_f(x,v)}$ smaller we have that, for any $(x',v',t) \in O_{x,v,t_f(x,v)}$ with $t \leq t_f(x',v')$, \[h_1^t (t)K_1(t) h_1(t) \leq -A h_1^t(t) h_1(t).\] Also, in the direction $(x,v)$, for any $t\leq t_f(x,v)$ we have
\[\bigl( \left(S_1\left(t\right)q_1,S_3\left(t\right)q_1\right),\left(q_1,0\right)\bigr)_{E\times F}=\bigl(\left(h_1\left(t\right),0\right),\left(K_1\left(t\right)h_1\left(t\right),K_3\left(t\right)h_1\left(t\right)\right) \bigr) \in L_{t,v}.\] 
By definition of $q$ we have $S_1(t)q_1=h_1(t) \to 0$ when $t\to t_f(x,v)$ and $K_1(t)h_1(t)=q_1 $. Assuming with no loss of generality that  $K_1(t)$ is diagonal, we see that any eigenvalue $\lambda_i(t)$ corresponding to an eigenvector $q_i\neq 0$ goes to $-\infty$ (notice that it cannot goes to $+\infty$ since we proved that $t\mapsto K(t)$ decreases). Hence, being the eigenvalues continuous with respect to $(x',v',t)$, by further shrinking $O_{x,v,t_f(x,v)}$ if needed we have $h_1^t(t) K_1(t) h_1(t) \leq -A h_1^t h_1$.
\end{proof}
\begin{Rk}
The last proof just says that, before focalization,  when the Lagrangian space $L_{t,v}$ has a vertical component it cannot be at the same time a graph above the horizontal space and above $F$.
\end{Rk}
To conclude the proof of Theorem \ref{foclip} we apply the Implicit Function Theorem in order to find the function $\tau$ needed in Definition \ref{Llip}. 
Let $(\bar x,\bar v)\in UM$ and $q\in U_xM$ be the focal direction associated. Then the function 
\begin{align*}
\Psi:\quad UM\times \R^+ &\to \R \\
(x,v,t) &\mapsto q^t S\left(t \right)q
\end{align*}
is well defined on a neighbourhood of $(\bar x,\bar v,t_f(\bar x,\bar v))$. Moreover $\Psi(\bar x, \bar v,t_f(\bar x,\bar v))=0$ and by Lemma  \ref{tfjacobi4} we have:
\[\left| \partial_t \Psi(\bar x, \bar v,t_f(\bar x,\bar v)) \right|=\left| q^t \dot S  \left(t_f(x,v)\right) q \right|\geq \delta.\]
Hence, by the Implicit Function Theorem we get a function $\tau$ defined in a neighborhood $O_{\bar x,\bar v}$ of $(\bar x,\bar v)$ such that $\Psi( x,  v,\tau( x, v))=0$. By Lemma \ref{tfjacobi4} we find that $t_f(x,v)\leq \tau(x,v)$, and
it only remains to check that $\tau$ is Lipschitz continuous. 
This follows from the fact that, by compactness, there exist $K>0$ such that
\begin{align*}
\left| d_{\bar x,\bar v}\tau \right| &=\left| \frac{1}{\partial_t \Psi(\bar x, \bar v,t_f(\bar x,\bar v))}d_{\bar x,\bar v}\Psi \right|\\
&\leq \frac{K}{\delta}.
\end{align*}
It concludes proof of Theorem \ref{cutlip}.
\begin{Rk}
This method also proves Theorem \ref{focsemi}, as we easily see that the second differential of $\tau$ at $(x,v)$ is bounded from above.
\end{Rk}
\subsection{Proof of Theorem \ref{cutlip}: Lipschitz continuity of the tangent cut loci}
\begin{proof}[Proof of Theorem \ref{cutlip}]
Let $x \in M$, $e_v\in U_xM$, $v=t_{cut}(e_v)e_v$. 
We want to find the function $\tau$ needed in Theorem \ref{Llip} using the Implicit Function Theorem. The construction of the function $\tau$ will depends on $x,v$, and $\delta(v)$.

\subsubsection{At the intersection with the tangent focal locus}

If $v \in \TFL(x) \cap \TCL(x)$ then $t_{cut}(x,e_v)=t_f(x,e_v)$ and for any $\left(y,e_w \right) \in U_xM$ we have $t_{cut}(y,e_w)\leq t_f(y,e_w)$. 
Notice that by Theorem \ref{foclip} the function $t_f$ is $\kappa$--Lipschitz continuous, so the choice $\tau =t_f$ works.

\smallskip

 
\subsubsection{Far from the tangent focal locus}  

If $v \not \in \TFL(x) \cap \TCL(x)$ then  $\delta(v) > 0$.  
 Let $\bar v \in \bar \I(x)$ such that $|v- \bar v|=\delta(v)$ and $\exp_x v=\exp_x \bar v = y$.
Let $K \subset TM$ be a compact neighborhood of the geodesic path $t\in [0,1] \mapsto \exp_x(t \bar v)$ and $0<\epsilon<t_{inj}$  such that $ B(y,\epsilon)\subset K(y)$. For any $\eta \in T_yS$ with $z=\exp_y \eta \in B(y,\epsilon)$, we construct a path $s,t \in [0,\epsilon]\times [0,1] \mapsto \gamma(s,t)$ satisfying the following conditions for any $(s,t)\in [0,\epsilon]\times[0,1]$:
\begin{enumerate}
\item $\gamma(0,t)=\gamma(t)=\exp_x(t\bar v)$.
\item $\gamma(s,1)= \exp_y(s \eta)=z_s$.
\item $\gamma(s,0)= x$.
\item $\gamma(\cdot,\cdot) \in C^1([0,1]^2,M)$.
\item $(\gamma(s,t),\dot \gamma(s,t)) \in K$.
\end{enumerate}
Working in smooth charts this construction is easy to realize. 
Note that $s\leq \epsilon \leq t_{inj}$ implies that $s \mapsto \exp_y(s \eta)$ is a minimizing geodesic path, therefore $d^2(y,z_s)=s^2$ and $z_s \in B(y,\epsilon)$. However  $t \mapsto \gamma(s,t)$ and $s \mapsto \gamma(s,t)$ are not necessarily geodesic paths away from $s=0$ and $t=1$. Anyway the first variation formula 
 applied to $\gamma$ provides a constant $K$ such that
\begin{align}\label{var1}
d^2(x,z_s)&\le A(\gamma(s,t)) \le A(\gamma(0,t))+ s\left\langle d_{\bar v}\exp_x \bar v,\eta \right\rangle +\frac{s^2}{2}K \\
\nonumber &\leq d^2(x,y) + s\left\langle d_{\bar v}\exp_x \bar v,\eta \right\rangle + \frac{K}{2}d^2(y,z_s).
\end{align}
We can similarly add a perturbation of $x$. 
Hence we define $u : B(x,\epsilon) \times B(y,\epsilon) \to \R^+$ by 
\begin{multline}\label{majorationdeu}
u(x',z):= d^2(x,y)+ \langle d_{\bar v}\exp_x \bar v,(\exp_y)^{-1}(z) \rangle -\langle  \bar v,(\exp_x)^{-1}(x') \rangle \\+ K \left( d^2(x,x')+ d^2(y,z) \right).
\end{multline}
Note that if we compare the above expression to the right hand side of \eqref{var1} we have changed $\frac12 K$ to $K$; this modification shows that $d^2(x',z)=u(x',z)$ if and only if $z=y$ and $x'=x$ otherwise $d^2(x',z)<u(x',z)$. 
Moreover $u$ is $C^1$ and 
\[(d_{x'=x,z=y}u)\cdot(\zeta, \eta) =-\langle  \bar v,\zeta \rangle + \langle d_{\bar v}\exp_x \bar v,\eta \rangle.\]
By continuity of  $\exp_x$ there exits $\epsilon>0$ such that for any $(x',w)\in B \left( (x,v),\epsilon \right)\subset TM $, $\exp_{x'}(w)=z \in B(y,\epsilon) $. 
Let $\gamma(x',w,\theta)=\exp_{x'}(\theta w) $ we define $\Phi$ : $B\left( (x,v),\epsilon \right) \to \R$ by
\[
w \mapsto u(\exp_{x'} (w))-A(\gamma(x',w,\theta)).
\] 
According to the first variation formula 
, $\Phi$ is $C^1$ on $B \left( (x,v),\epsilon \right) $ and the differential at $x,v$ in the direction $\zeta,\xi$ ({\it i.e. } $x'=\exp_x(r\zeta)$, $w=v+s\xi$) is given by
\begin{align}\label{differentielenw}
(d_{x,v}\Phi)(\zeta,\xi)&= \langle d_{p=\bar v}\exp_x \bar v,d_{p=v}\exp_x \xi\rangle_y- \langle d_{p=v}\exp_x v,d_{p=v}\exp_x \xi \rangle_y+\langle v- \bar v,\zeta \rangle\\
\nonumber &=\langle  q-\bar q,\eta\rangle_y  +\langle v- \bar v,\zeta \rangle,
\end{align}
where $d_{\bar v}\exp_x \bar v=-\bar q$, $d_{v}\exp_x v=-q$ and $d_v\exp_x \xi =\eta$.

The set $O_{x,v} :=\left\lbrace  (x',v',t)\in  UM \times \R^+ \,|\, (x',tv' ) \in B \left( (x,v),\epsilon \right)\right\rbrace $  is an open subset of $ UM \times \R^+ $, and $(x,e_v,t_{cut}(e_v)) \in O_{x,v}$. 
We define  $\Psi$ by
\begin{align*}
\Psi: \quad O_{x,v} &\to \R \\
\Psi(x',v',t) & \mapsto \Phi(x',tv').
\end{align*}
By definition $\Psi(x,e_v,t_{cut}(e_v))=u(x,y)-A(\gamma(x,v,\theta))=0$ and for $(x',v',t)\neq (x,e_v,t_{cut}(e_v)) $ if  $\Psi(x',v',t)=0$ then \eqref{var1} implies \[d^2(x',\exp_{x'}(tv'))<A(\gamma(x',v',t)),\] hence $t > t_{cut}(x',v')$. 
Furthermore we compute 
\[\frac{\partial}{\partial t} \Psi (x,e_v,t_{cut}(e_v))=    d_{p=v}\Phi(x,e_v)=\langle q-\bar q,-\frac{1}{t_{cut}(e_v)}q\rangle_y.\] 
Since the geodesic flow is Lipschitz continuous, there exists $A>0$ such that
\[\frac{1}{A}\leq |q-\bar q|_y \leq A|v-\bar v|_x. \]
Since  $|q|^2_y=|\bar q|^2_y$,  and $t_{cut}$ is bounded by a constant $C$ on $TM$,  we have
\begin{equation}\label{estdt2}
\frac{1}{t_{cut}(e_v)}|\langle q-\bar q, q\rangle_y |=\frac{1}{t_{cut}(e_v)}| q-\bar q|^2 \geq \frac{1}{2AC} \delta(v)^2> 0.
\end{equation}
Therefore 
\begin{equation}\label{estdt}
\left| \frac{\partial}{\partial t} \Psi (x,e_v,t_{cut}(e_v)) \right| \geq \frac{1}{2C'} \delta(v)^2> 0.
\end{equation}
Consequently we can apply the implicit function theorem to $\Psi(x',v',t)=0$ at $(x,e_v,t_{cut}(e_v))$, to find a neighborhood of $O'_{x,v} \subset UM$ of $(x,e_v)$ and a function $\tau \in C^1(O'_{x,v},\R^+)$ such that 
\begin{eqnarray}
\forall (x',v') \in O'_{x,v}\quad \, t_{cut}(x',v') \leq \tau(x',v'), \qquad  t_{cut}(x,e_v)=\tau(x,e_v).
\end{eqnarray}
The implicit function theorem also gives the differential of $\tau$:
\begin{align}\label{diff}
d_{x'=x,v'=e_v}\tau (\zeta,\xi)&=-\frac{1}{d_{p=v}\Phi(x,e_v)} d_{x'=x,p=v}\Phi(\zeta,\xi)\\
\nonumber &=\frac{{t_{cut}(e_v)}}{\langle q-\bar q,q\rangle_y}\left[  \langle q-\bar q,\eta \rangle_y + \langle v-\bar v,\zeta \rangle_x \right]\\
\nonumber &\leq \frac{C''\left(|\eta|_y + |\zeta|_x\right)}{\delta(v)}.
\end{align}
We fix a small constant $\bar \delta> 0$ and  distinguish two cases.
\subsubsection*{Case $1$: $\delta(v) \geq \bar \delta$} 
In this case \eqref{diff} becomes:
\[
|d_{x'=x,v'=e_v}\tau (\zeta,\xi)| \leq \frac{C}{\bar\delta}\left(|\zeta|+|\xi|\right).
\]
Therefore the function $\tau$ is $\kappa$ Lipschitz--continuous, near $(x,e_v)$, for any $\kappa \leq \frac{C}{2\bar \delta}$. 
In this case we are done. We remark that we proved the Lipschitz continuity of $t_{cut}$ for any perturbation of $(x,v)$, so in particular we obtained also the second item of Theorem \ref{cutlip} in the case $\delta(v)\geq \bar\delta$.
So we are only left to understand the case of   speeds near a purely focal point. 

\subsubsection*{Case $2$: $\delta(v) \leq \bar\delta$}
In this case $v$ is near a purely focal point, and
we need to be slightly more precise regarding the estimate of $|d_{x'=x,v'=e_v}\tau (\zeta,\xi)|$.
First of all we can rewrite \eqref{estdt2} as
\begin{equation}\label{estdt3}
\left| \frac{\partial}{\partial t} \Psi (x,e_v,t_{cut}(e_v)) \right |\geq \frac{1}{2C'}|v-\bar v|^2.
\end{equation}
Since the symplectic form is preserved along the Jacobi field we have,
for any $t>0$,
\begin{multline}\label{estdt4}
\sigma \left( (0,v-\bar v),(\zeta,\xi) \right) \\ = \sigma \left( (J_1^0(t)(v-\bar v), \dot J_1^0(t)(v-\bar v) ),( J_0^1(t)\zeta+ J_1^0(t)\xi,\dot J_0^1(t)\zeta+\dot J_1^0(t)\xi) \right) 
\end{multline} 
thus
\begin{multline}\label{equationproblem}
 -\left\langle  v-\bar v,\zeta \right\rangle_x -  \left\langle J_0^1(t)\zeta+ J_1^0(t)\xi,\dot J_1^0(t)(v-\bar v) \right\rangle_y = \\ \left\langle J_1^0(t)(v-\bar v),\dot J_0^1(t)\zeta+\dot J_1^0(t)\xi \right\rangle_y.
\end{multline} 
A Taylor formula together with the fact that $\exp_x(v)=\exp_x(\bar v) $ gives that there exists $A\in \R_+$ such that
\begin{equation}\label{equationproblem2}
|d_{p=v}\exp_x \left( v-\bar v \right)|_y = |J_1^0(t_{cut}(e_v))(v-\bar v)|_y \leq A | v-\bar v |^2.
\end{equation}
Thus the right hand side of \eqref{equationproblem} is smaller then $A | v-\bar v |^2 $. 
Thanks to \eqref{diff},   we can show the Lipschitz continuity separately on each variable;
 we conclude by examining three different cases. 
The first case is a perturbation along the variable $v$.
The second and third cases deal with a perturbation along the variable $x$.
\\

$\bullet$ If we only consider a perturbation along the speed $(\zeta =0)$ then \eqref{equationproblem} and \eqref{equationproblem2}  give 
\begin{equation}\label{equationproblem3}
 \left| \left\langle \eta,\dot J_1^0(t)(v-\bar v) \right\rangle_y   \right| \leq A | v-\bar v |^2.
\end{equation}
Moreover a Taylor formula on $ q-\bar q= d_{p=v}\exp_x(v)- d_{p=\bar v}\exp_x(\bar v)$ gives, for $\delta(v)$ small enough,
\begin{equation}\label{besoin}
\dot J_1^0(t)(v-\bar v)=q-\bar q +o(|v-\bar v|^2).
\end{equation}
We deduce that there exist $C>0$ and $\bar\delta>0$ such that for any $x\in M$ and $v\in \I(x)$ with $\delta(v)\leq \bar\delta$ we have, according to \eqref{differentielenw},
\[
 \left|d_{x'=x,p=v}\Phi(0,\xi) \right| = \left | \left\langle \eta,q-\bar q \right\rangle_y \right| \leq C | v-\bar v |^2.
\]
Together with \eqref{estdt3} and \eqref{diff}, we obtain
\begin{equation*}
d_{x'=x,v'=e_v}\tau (0,\xi) \leq \frac{ 2C' C | v-\bar v |^2}{| v-\bar v |^2}\leq C,
\end{equation*}
which proves the Lipschitz continuity in the $v$ variable. 
We recall that the constant $C$ can change in each inequality but is uniform on $TM$.  
\\

We now want to look for the Lipschitz continuity in the $x$ variable.
\smallskip

$\bullet$ If the perturbation $\zeta $ is collinear to $v$ ($\zeta=\pm v$) then \eqref{differentielenw} rewrites
\[
|d_{x'=x,p=v}\Phi(\zeta,0)| = | \left\langle v,v-\bar v \right\rangle_x | =\frac12 | v-\bar v |_x^2.
\]
Together with \eqref{estdt3} in \eqref{diff} we obtain that 
\begin{equation*}
d_{x'=x,v'=e_v}\tau (\zeta,0) \leq  C.
\end{equation*}
This is exactly the Lipschitz continuity at $(x,v)$ in the $x$ variable along the geodesic direction given by $v$,
and this concludes the proof of the first item of Theorem \ref{cutlip}.\\

$\bullet$ If the perturbation $\zeta $ belongs to $\Ker\, J_0^1\left( t_{cut}(e_v) \right) $ and $\xi=0$, then \eqref{equationproblem} becomes
\begin{equation}\label{equationproblem4}
 -\left\langle  v-\bar v,\zeta \right\rangle_x  = \left\langle J_1^0(t_{cut}(e_v))(v-\bar v),\dot J_0^1\left( t_{cut}(e_v) \right)\zeta \right\rangle_y,
\end{equation} 
and together with \eqref{differentielenw} and \eqref{equationproblem2} we get $|d_{x'=x,p=v}\Phi(\zeta,0)| \leq A| v-\bar v |_x^2$. By this estimate combined to  \eqref{estdt3} in \eqref{diff} we obtain a constant $C>0$ such that
\begin{equation*}
d_{x'=x,v'=e_v}\tau (\zeta,0) \leq C.
\end{equation*}
Therefore the function $t_{cut}$ is Lipschitz continuous along these directions.\\

Notice that in dimension two, for any $(x,v)\in M$ we can take a basis with one direction along $e_v$ and the other one in  $\Ker \,J_0^1\left( t_{cut}(e_v) \right) $, and we  deduce that $t_{cut}$ is Lipschitz continuous on $UM$. 
This concludes the proof of Theorem \ref{cutlip}.
\end{proof}


\begin{Rk}
We do not know if in any dimension the function $t_{cut}$ is Lipschitz continuous on $UM$. 
However, for any $n$--dimensional Riemannian manifold, such that 
\[\dim \left[\Ker J_0^1\left(t_{cut}(e_v)\right) \right]=n-1, \] we proved that $t_{cut}$ is Lipschitz continuous on $UM$. 
 It is for example the case of $\mathbb S^n$.
 More generally we proved the following theorem:
 \begin{Thm}[Lipschitz continuity of the tangent cut loci II]
 There exists  $\kappa>0$ such that for each $x\in M$ the set $ I(x)$ is $\kappa$-Lipschitz continuous. Moreover, for any $(x,v)\in UM$, $\zeta \in \{ \Ker J_0^1\left(t_{cut}(e_v)\right)  \}\cup  \{\pm v\}$,  and $(y,w)\in U_{\exp_x{(\R \zeta)}}M$, we have 
\[   \left| t_{cut} (y,w) - t_{cut} (x,v) \right| \leq \kappa\, d_{TM}\left( (x,v), (y,w)\right). \]

 \end{Thm}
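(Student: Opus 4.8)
The plan is to observe that the statement is essentially a repackaging of the proof of Theorem~\ref{cutlip}, with only the bookkeeping made explicit. The first assertion --- that $\I(x)$ is $\kappa$--Lipschitz continuous for each $x$ --- is exactly item~(1) of Theorem~\ref{cutlip}, so there is nothing new to prove there. For the ``Moreover'' part I would fix $(x,v)\in UM$ (so $e_v=v$), let $v_\ast:=t_{cut}(x,v)\,v\in\TCL(x)$ be the associated cut velocity, and argue according to the dichotomy used in the proof of Theorem~\ref{cutlip}, with the constant $\bar\delta>0$ introduced there. If $\delta(v_\ast)\geq\bar\delta$ (Case~1), that proof already produces a $\kappa$--Lipschitz estimate for $t_{cut}$ near $(x,v)$ with $\kappa\leq C/(2\bar\delta)$ valid for \emph{every} perturbation, so the claim holds trivially regardless of $\zeta$; I would therefore reduce to the case $\delta(v_\ast)\leq\bar\delta$, i.e. $v_\ast$ near a purely focal velocity.

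In that regime I would recall the $C^1$ majorant $\tau$ constructed in the proof of Theorem~\ref{cutlip} by applying the Implicit Function Theorem to $\Psi(x',v',t)=0$ at $(x,e_v,t_{cut}(e_v))$: it satisfies $t_{cut}(x',v')\leq\tau(x',v')$ with equality at $(x,e_v)$, and its differential is given by~\eqref{diff}. The three displayed bullet points at the end of that proof show, with a constant $C$ uniform on $TM$ --- this is where the lower bound~\eqref{estdt3} together with the Taylor estimates~\eqref{equationproblem2} and~\eqref{besoin} and the compactness of $M$ enter --- that
\[
|d_{x'=x,v'=e_v}\tau(0,\xi)|\leq C,\qquad |d_{x'=x,v'=e_v}\tau(\pm v,0)|\leq C,\qquad |d_{x'=x,v'=e_v}\tau(\zeta,0)|\leq C
\]
for every unit $\xi$ and every $\zeta\in\Ker J_0^1(t_{cut}(e_v))$. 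Since $d_{x'=x,v'=e_v}\tau$ is linear, for $\zeta\in\{\pm v\}\cup\Ker J_0^1(t_{cut}(e_v))$ and an arbitrary velocity perturbation $\xi$ I would write
\[
|d_{x'=x,v'=e_v}\tau(\zeta,\xi)|\leq|d_{x'=x,v'=e_v}\tau(\zeta,0)|+|d_{x'=x,v'=e_v}\tau(0,\xi)|\leq 2C,
\]
restrict $\tau$ to the set of $(x',v')$ with $x'$ on the geodesic $\exp_x(\R\zeta)$, and pass from this uniform bound on the differential of the $C^1$ majorant $\tau$ to a Lipschitz estimate for $t_{cut}$ exactly as in the proof of Theorem~\ref{cutlip}. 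This yields the claimed inequality with $\kappa=2C$ near $(x,v)$; taking the maximum with the Case~1 constant and covering $UM$ by finitely many such neighbourhoods gives a single global $\kappa$.

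The part I expect to need the most care is the uniformity of $C$ as $v_\ast$ approaches a purely focal velocity: the generic bound coming from~\eqref{diff} degenerates like $1/\delta(v_\ast)$, and it is exactly along the distinguished directions $\pm v$ and $\Ker J_0^1(t_{cut}(e_v))$ that the symplectic identity~\eqref{equationproblem} together with the Taylor expansions~\eqref{equationproblem2}--\eqref{besoin} cancel this singular factor. Since that cancellation is precisely the content of the three bullet points recalled above, no genuinely new estimate is required --- only their recombination by linearity of $d\tau$. Finally, I would note that when $\dim\bigl[\Ker J_0^1(t_{cut}(e_v))\bigr]=n-1$ for every $(x,v)\in UM$, as happens for instance on $\Sph^n$, the directions $\{\pm v\}\cup\Ker J_0^1(t_{cut}(e_v))$ span all of $T_xM$, so the estimate holds for arbitrary perturbations and $t_{cut}$ is globally $\kappa$--Lipschitz continuous on $UM$.
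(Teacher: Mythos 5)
Your proposal is correct and takes essentially the same approach as the paper, which presents this theorem in a Remark as an immediate consequence of the three bullet-point estimates at the end of the proof of Theorem~\ref{cutlip}. Your explicit decomposition of an admissible perturbation $(\zeta,\xi)$ into $(\zeta,0)+(0,\xi)$ and the use of linearity of $d_{x'=x,v'=e_v}\tau$ to combine the corresponding uniform bounds is exactly the step the paper leaves implicit, and the remaining ingredients you invoke (the IFT majorant $\tau$, the dichotomy on $\delta(v_\ast)$ with the purely focal sub-case resolved by $\tau=t_f$, and the compactness covering argument) are precisely those used there.
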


\end{Rk}
\end{appendices}


\begin{thebibliography}{99}

\bibitem{BieMil88}
E.~Bierstone and P.D.~Milman.
\newblock Semianalytic and subanalytic sets.
\newblock \emph{Inst. Hautes \'Etudes Sci. Publ. Math.}, 67:5--42, 1988.

\bibitem{Buc77}
M.A.~Buchner.
\newblock Simplicial structure of the real analytic cut locus.
\newblock \emph{Proc. Amer. Math. Soc.}, 64:118--121, 1977.

\bibitem{cs04}
P.~Cannarsa and C.~Sinestrari.
\newblock {\em Semiconcave functions, Hamilton-Jacobi equations, and optimal control}.
\newblock Progress in Nonlinear Differential Equations and their Applications,
58. Birkh\"auser Boston Inc., Boston, 2004.

\bibitem{cr10}
M.~Castelpietra and L.~Rifford.
\newblock Regularity properties of the distance function to conjugate and cut loci for viscosity solutions of Hamilton-Jacobi equations and applications in
Riemannian geometry.
\newblock  {\em ESAIM Control Optim. Calc. Var.}, 16(3):695--718, 2010.

\bibitem{figallibourbaki}
A.~Figalli.
\newblock Regularity of optimal transport maps (after Ma--Trudinger--Wang and Loeper).
\newblock {\em S\'eminaire Bourbaki.} Vol. 2008/2009, Exp. No. 1009.

\bibitem{FGR2d}
A.~Figalli, T. Gallou\"et, and L.~Rifford.
\newblock On the convexity of injectivity domains on surfaces.
\newblock \emph{Work in progress}.

\bibitem{FigLopdim2}
A.~Figalli and G.~Loeper, Gr{{\'e}}goire.
\newblock {{$C^1$} regularity of solutions of the {M}onge-{A}mp{\`e}re equation for optimal transport in dimension two}.
\newblock {\em Calc. Var. Partial Differential Equations.}

\bibitem{fr09}
A.~Figalli and L.~Rifford.
\newblock Continuity of optimal transport maps and convexity of injectivity domains on small deformations of $\Sph^2$.
\newblock \emph{Comm. Pure Appl. Math.}, 62(12):1670--1706, 2009.

\bibitem{FRV2}
A.~Figalli, L.~Rifford and C.~Villani.
\newblock On the Ma--Trudinger--Wang curvature on surfaces.
\newblock \emph{Calc. Var. Partial Differential Equations}, 39(3-4):307--332, 2010.

\bibitem{FRV_SC}
A.~Figalli, L.~Rifford and C.~Villani.
\newblock Tangent cut loci on surfaces.
\newblock \emph{Differential Geom. Appl.}, 29(2):154--159, 2011.

\bibitem{FRV_LAST}
A.~Figalli, L.~Rifford and C.~Villani.
\newblock Necessary and sufficient conditions for continuity of optimal transport maps on Riemannian manifolds.
\newblock {\em Tohoku Math. J.}, 63(4):855--876, 2011.

\bibitem{FRV}
A.~Figalli, L.~Rifford and C.~Villani.
\newblock Nearly round spheres look convex.
\newblock \emph{Amer. J. Math.}, 134(1):109-139, 2012.

\bibitem{FigVil08}
A.~Figalli and C.~Villani.
\newblock An approximation lemma about the cut locus, with applications in optimal transport theory.
\newblock {\em Methods Appl. Anal.}, 15(2):149--154, 2008.
	
\bibitem{GHL}
S.~Gallot, D.~Hulin and J.~Lafontaine.
\newblock {\em Riemannian geometry}, second~ed.
\newblock Universitext. Springer-Verlag, Berlin, 1990.

\bibitem{thesethomas}
T.~Gallou\"et.
\newblock Transport Optimal : R\'egularit\'e et applications.
\newblock {PhD Thesis, ENS Lyon}, 2012.

\bibitem{it01}
J.-I.~Itoh and M.~Tanaka.
\newblock The Lipschitz continuity of the distance function to the cut locus.
\newblock {\em Trans. Amer. Math. Soc.}, 353(1), 21--40, 2001.

\bibitem{KimMcC10}
{K.~Young-Heon and R.J.~McCann.}
\newblock Continuity, curvature, and the general covariance of optimal transportation
\newblock  {\em J. Eur. Math. Soc. (JEMS)},12(4) ,1009--1040, 2010.


\bibitem{Kim:2008ys}
{K.~Young-Heon and R.J.~McCann.}
\newblock Towards the smoothness of optimal maps on {R}iemannian submersions and {R}iemannian products (of round spheres in particular)
\newblock {\em J. Reine Angew. Math.},664,1--27,2012

\bibitem{ln05}
Y.~Li and L.~Nirenberg.
\newblock The distance function to the boundary, Finsler geometry, and the singular set of viscosity solutions of some Hamilton-Jacobi equations.
\newblock  {\em Comm. Pure Appl. Math.}, 58(1):85--146, 2005.


\bibitem{loeper09}
G.~Loeper:
\newblock On the regularity of solutions of optimal transportation problems.
\newblock {\em Acta Math.}, 202(2):241-283, 2009.

\bibitem{lv10}
G.~Loeper and C.~Villani.
\newblock Regularity of optimal transport in curved geometry: the nonfocal case.
\newblock \emph{Duke Math. J.}, 151(3):431--485, 2010.

\bibitem{mtw05}
X.~N.~Ma, N.~S.~Trudinger and X.~J.~Wang.
\newblock Regularity of potential functions of the optimal transportation problem.
\newblock {\em Arch. Ration. Mech. Anal.}, 177 (2005), no. 2, 151--183.


\bibitem{mccann01}
R.~J.~McCann.
\newblock Polar factorization of maps in Riemannian manifolds.
\newblock {\em Geom. Funct. Anal.}, 11:589--608, 2001.

\bibitem{Sakai}
T.~Sakai.
\newblock Riemannian geometry.
\newblock {\em Translations of Mathematical Monographs}, Vol.~149.
American Mathematical Society, 1996.

	
\bibitem{villaniSF}
C.~Villani.
\newblock Optimal transport, old and new.
\newblock {\em Grundlehren des mathematischen Wissenschaften}, Vol.~338. Springer-Verlag, Berlin, 2009.

\bibitem{villani11}
C.~Villani.
\newblock Regularity of optimal transport and cut locus: From nonsmooth analysis to geometry to smooth analysis.
\newblock {\em Discrete Contin. Dyn. Sys.}, 30(2):559--571, 2011.





\end{thebibliography}
\end{document}